\documentclass{amsart}

\usepackage{endnotes}
\let\footnote=\endnote
\let\enotesize=\normalsize
\def\notesname{Endnotes}%
\def\makeenmark{\hbox to1.275em{\theenmark.\enskip\hss}}
\def\enoteformat{\rightskip0pt\leftskip0pt\parindent=1.275em
  \leavevmode\llap{\makeenmark}}

\usepackage{natbib}
\usepackage{subcaption}
\usepackage{multirow}
\usepackage{scalerel}
\usepackage{array}
\usepackage{eqnarray}
\usepackage{amssymb}
\usepackage{mathrsfs}
\usepackage{booktabs,caption}
\usepackage[flushleft]{threeparttable}
\usepackage{mathtools}
\usepackage{algorithm}
\usepackage{algorithmic}
\usepackage{latexsym}
\usepackage{bm}
\usepackage{url}
\usepackage{graphicx}
\usepackage{xcolor}
\usepackage{longtable}
\usepackage{textcomp}
\usepackage{tabu}
\usepackage{enumitem}
\usepackage{tikz}
\usepackage{subcaption}
\usepackage{amsmath}
\usepackage{amsthm}
\usepackage[toc,page]{appendix}
\usepackage{bm}
\usepackage{rotating}

\usetikzlibrary{arrows.meta,positioning}

\newcommand{\lip}{\left<}
\newcommand{\rip}{\right>}
\newcommand{\lvt}{\left[}
\newcommand{\rvt}{\right]}

\newcommand{\ddd}{,\ldots,}

\newcommand{\iq}{\mbox{\rm IQ}}

\newcommand{\re}{\mathbb{R}}

\newcommand{\bE}{\mathbb{E}}
\newcommand{\N}{\mathbb{N}}
\newcommand{\PP}{\mathbb{P}}
\newcommand{\Dt}{\Delta}

\def\gm{\gamma}

\newcommand{\st}{{s.t.}}

\renewcommand{\b}[1]{\bm{ #1}}

\newcommand{\lmd}{\lambda}

\newcommand{\mmc}[1]{\mathcal{#1}}

\def\rank{\mbox{rank}}

\newcommand{\bdes}{\begin{description}}
\newcommand{\edes}{\end{description}}

\newcommand{\bal}{\begin{align}}
\newcommand{\eal}{\end{align}}

\newcommand{\bnum}{\begin{enumerate}}
\newcommand{\enum}{\end{enumerate}}

\newcommand{\bit}{\begin{itemize}}
\newcommand{\eit}{\end{itemize}}
\newcommand{\be}{\begin{equation}}
\newcommand{\ee}{\end{equation}}
\newcommand{\bea}{\begin{eqnarray}}
\newcommand{\eea}{\end{eqnarray}}
\newcommand{\ben}{\begin{equation}}
\newcommand{\een}{\end{equation}}

\newcommand{\baray}{\begin{array}}
\newcommand{\earay}{\end{array}}

\newcommand{\bsry}{\begin{subarray}}
\newcommand{\esry}{\end{subarray}}

\newcommand{\bca}{\begin{cases}}
\newcommand{\eca}{\end{cases}}

\newcommand{\bcen}{\begin{center}}
\newcommand{\ecen}{\end{center}}

\newcommand{\bbm}{\begin{bmatrix}}
\newcommand{\ebm}{\end{bmatrix}}

\newcommand{\btab}{\begin{tabular}}
\newcommand{\etab}{\end{tabular}}
\newcommand{\bx}{{\b x}}
\newcommand{\bs}{{\b s}}
\newcommand{\by}{{\b y}}
\newcommand{\bz}{\b{z}}
\newcommand{\bb}{\b{b}}

\newcommand{\mcal}[1]{\mathcal{#1}}

\newcommand{\tp}{\mathsf T}
\newcommand{\copge}{\succcurlyeq_{co}}
\newcommand{\cpge}{\succcurlyeq_{cp}}

\theoremstyle{plain}

\newtheorem{proposition}{Proposition}

\newtheorem{corollary}{Corollary}
\newtheorem{lemma}{Lemma}
\newtheorem{theorem}{Theorem}
\newtheorem{assumption}{Assumption}
\newtheorem{example}{Example}

\theoremstyle{remark}
\newtheorem*{remark}{Remark}

\let\footnote=\endnote
\begin{document}
%%%%%%%%%%%%%%%%

\title[Max-Min Bilinear Completely Positive Programs]{Max-Min Bilinear Completely Positive Programs: A Semidefinite Relaxation with Tightness Guarantees}

\author[Sarah Yini Gao]{Sarah Yini Gao}
\author[Xindong~Tang]{Xindong Tang}
\author[Yancheng Yuan]{Yancheng Yuan}

\address{Sarah Yini Gao,
Lee Kong Chian School of Business, Singapore Management University.}
\email{yngao@smu.edu.sg}

\address{Xindong Tang, Department of Mathematics,
Hong Kong Baptist University,
Kowloon Tong, Kowloon, Hong Kong;
Institute for Research and Continuing Education, Hong Kong Baptist University, Shenzhen, China.}
\email{xdtang@hkbu.edu.hk}

\address{Yancheng Yuan, Department of Applied Mathematics, The Hong Kong Polytechnic University, Hung Hom, Kowloon, Hong Kong.}
\email{yancheng.yuan@polyu.edu.hk}

\date{}

\subjclass[2020]{90C23, 15B48, 44A60, 90C22, 65K05}

\begin{abstract}
Max–min bilinear optimization models, where one agent maximizes and an adversary minimizes a common bilinear objective, serve as canonical saddle-point formulations in optimization theory. They capture, among others, two-player zero-sum games, robust and distributionally robust optimization, and adversarial machine learning. This study focuses on the subclass whose variables lie in the completely positive (CP) cone, capturing a broad family of mixed-binary quadratic max–min problems through the modelling power of completely positive programming. 
We show that such problems admit an equivalent single-stage linear reformulation over the COP–CP cone, defined as the Cartesian product of the copositive (COP) and CP cones. Because testing membership in COP cones is co-NP-complete, the resulting COP-CP program inherits NP-hardness.
To address this challenge, we develop a hierarchy of semidefinite relaxations based on moment and sum-of-squares representations of the COP and CP cones, and flat truncation conditions are applied to certify the tightness.
We show that the tightness of the hierarchy is guaranteed under mild conditions.
The framework extends existing CP/COP approaches for distributionally robust optimization and polynomial games. We apply the framework to the cyclic Colonel Blotto game, an extension of Borel's classic allocation contest. Across multiple instances, the semidefinite relaxation meets the flat-truncation conditions and solves the exact mixed-strategy equilibrium.
\end{abstract}

\maketitle
%%%%%%%%%%%%%%%%%%%%%%%%%%%%%%%%%%%%%%%%%%%%%%%%%%%%%%%%%%%%%%%%%%%%%%

\section{Introduction}\label{sec:intro}

Max-min bilinear optimization problems, which pit one decision-maker's maximization against another's minimization over a bilinear objective, are fundamental in both mathematical theory and optimization. From a theoretical perspective, these problems, coupled with the corresponding min-max problem, solve the saddle-point solutions that equalize the max-min and min-max values under appropriate convexity-concavity conditions. Such saddle-point frameworks are central to duality in convex optimization, where solving the saddle point of a Lagrangian corresponds to addressing both primal and dual problems simultaneously. Equally important is the practical significance of max-min models: in game theory, for instance, a two-player zero-sum game can be modeled as a max-min bilinear problem (e.g. the payoff matrix game), with each player's optimal strategy leading to a saddle-point equilibrium as ensured by von Neumann’s minimax theorem. Likewise, robust decision-making and control often employ a worst-case max-min criterion, effectively treating nature as the minimizing player to ensure solutions perform well under the most adverse conditions. In machine learning, adversarial settings such as generative adversarial networks (GANs) cast training as a max-min game between competing models, a formulation that is inherently a bilinear (player vs. opponent) optimization at its core.
The bilinear structure in all these applications is especially noteworthy: it provides a tractable yet expressive representation of strategic interactions, enabling the use of saddle-point analysis and strong equilibrium guarantees. Given the theoretical depth and broad applicability of max-min bilinear optimization problems, advancing our understanding of their structural properties, computational complexity, and algorithmic solutions remains a central and impactful direction in both optimization theory and applied decision sciences.
 
In this paper, we specifically consider \emph{a max-min bilinear optimization problem over completely positive cones (CP cones).} 
We will formally define this problem with more details in Section \ref{sec:conic}.
Specifically, the CP cone is the convex cone of completely positive matrices.
It has drawn significant attention in recent years due to the development of completely positive programs (CPPs), which have been shown to be powerful reformulations for many NP-hard problems. In particular, a CPP is defined as a linear program over the CP cones, as proven effective in modeling, for instance, standard quadratic programs \cite{bomze2000copositive}, fractional quadratic problems \cite{preisig1996copositivity}, and various combinatorial problems such as maximum stable set \cite{de2002approximation}, clique number determination \cite{dukanovic2010copositive}, the quadratic assignment problem \cite{povh2009copositive}, and the 3-partitioning problem \cite{povh2007copositive}. More generally, \cite{burer2009copositive} showed that a linearly constrained nonconvex quadratic program with mixed 0-1 variables admits an equivalent CPP formulation. Extending this result, \cite{natarajan2011mixed} demonstrated that deriving the first-moment bound of the objective value of a mixed 0-1 linear optimization problem with uncertain objective is equivalent to solving a CPP. It is also well recognized that for certain quadratic problems, completely positive or its dual, copositive (COP) formulations provide tighter relaxations than semidefinite programs \cite{quist1998copositive}. Comprehensive discussions on copositive and completely positive programming appear in \cite{berman2003completely, dur2010copositive, burer2012copositive, nie2018complete}.

Beyond their theoretical significance, CPP and its dual, a copositive program (COPP), defined as a linear program over the convex cone of copositive matrices, have shown significant practical value. In particular, these programs can model classes of robust \citep{xu2025improved} or distributionally robust optimization (DRO) problems under moment-based \cite{gao2019disruption} or Wasserstein \cite{hanasusanto2018conic} ambiguity sets. Such equivalences have spurred further applications in domains including operational flexibility \cite{yan2018design}, supply chain disruption \cite{gao2019disruption}, appointment scheduling \cite{kong2013scheduling}, and maritime operations \cite{zhang2022schedule}, among others.
Given that CP reformulations are capable of handling mixed 0-1 nonconvex quadratics, max-min bilinear optimization over CP cones have the potential to capture more complex models in important applications and contexts with max-min objectives such as in game theory or robust optimization.

To solve the max-min bilinear optimization over CP cones, we show that it can be reformulated as a single-stage linear optimization problem over the \emph{COP-CP cone}, defined as the Cartesian product of the COP cone and the CP cone. We refer to this single-stage formulation as \emph{COP-CP program}. 
Note that testing membership in the COP cone is co-NP-complete \cite{murty1985some}, and thus CPP and COPP are generally NP-hard, so is the COP-CP program. While various approximation methods have been proposed for COP or CP individually, such as the sum-of-squares(SOS)-based semidefinite approximations {\cite{nie2018complete, fan2017semidefinite} or the polyhedral cone approximations \cite{de2002approximation}, they cannot be directly combined for the COP-CP program. 
For instance, the approach in \cite{nie2018complete} gives an inner approximation of the COP cone using SOS polynomials, while \cite{fan2017semidefinite} gives an outer approximation of the CP cone that exploits truncated multi-sequences,} which work in opposite directions and thus do not provide a clear upper or lower bound for the COP-CP formulation. Thus, the convergence of the hierarchy of either semidefinite programs (SDPs) or linear programs (LPs) is not monotone. 

In this paper, we develop a relaxation method for the COP-CP program based on the moments and SOS, which can be equivalently formulated as an SDP. 
We further provide theoretical guarantees based on \emph{flat truncation conditions} under which these relaxations are tight. 
Flat truncation plays an essential role in certifying the global optimality and extracting optimizers in polynomial optimization, and more generally, generalized moment problems; see the foundational works of \cite{lasserre2001global,laurent2009sums,huang2024finite,nie2013certifying}.
{The Moment-SOS hierarchy of semidefinite relaxations has been widely applied to solve various polynomial-related problems, such as computational geometry \cite{henrion2009approximate}, generalized Nash equilibrium problems \cite{nie2023convex,nie2023rational}, robotics and optimal control \cite{li2024geometry,pauwels2017positivity}, sparse polynomial optimization \cite{magron2023sparse,nie2024characterization},
two-stage stochastic programs \cite{zhong2024towards}, etc.
Particularly, it solves many difficult tensor computation problems efficiently, such as tensor eigenvalue complementarity problems\cite{fan2018tensor} and problems related to CP and COP tensors \cite{nie2018complete,zhou2014cp,zhou2018completely,fan2017semidefinite,fan2019completely}.
Recently, \cite{huang2026finite} exploits Moment-SOS relaxations to certify copositivity over the positive semidefinite cone.
Comprehensive surveys and textbooks for the Moment-SOS hierarchy may be found in \cite{lasserre2009moments,nie2023moment} and the recent overview \cite{lasserre2024moment}.}
It is important to note that with the flat truncation conditions, the semidefinite relaxation is tight. {This implies that the corresponding COP-CP conic program, which is generally NP-hard, is equivalent to a semidefinite program that can be solved to an arbitrary accuracy under mild conditions.}
While it remains an open question on quantifying the relaxation gap between SDP-based relaxation to CP/COP programs, the result in this paper makes a step towards this endeavor.

To demonstrate the applicability of COP-CP reformulation and the relaxation method, we present a numerical study of the \emph{Cyclic Colonel Blotto game}, an adaptation to the classic Colonel Blotto game introduced by \cite{borel1921theorie}, which is generally NP-hard. We show that this game admits a max-min bilinear formulation over CP cones.
Our experiments explore various cases of this game and find several instances that the resulting COP-CP conic program can indeed be solved exactly, with flat truncation verifiably holding in the solutions. 

The remainder of this paper is organized as follows. In \S2, we review relevant preliminaries, definitions, and techniques. In \S3, we formally introduce the max-min bilinear conic program, and show that it can be reformulated as a single-stage linear optimization problem over the COP-CP cone.. We further establish its equivalences to key problems in biquadratic games and (distributionally) robust optimization with mixed 0-1 quadratic objectives. 
In \S4, we present a semidefinite relaxation methods for the COP-CP program based on moments and sum-of-squares and provide conditions under which they are guaranteed to be tight. Numerical studies based on the Cyclic Colonel Blotto game demonstrating our approach appear in \S5, and we conclude in \S6. All the proofs are relegated to E-Companion \ref{sc:prfsc3} and \ref{sc:prfsc4}.

{\it Notations.} Throughout the paper, lowercase letters denote scalars, bold lowercase letters denote vectors, and uppercase letters denote matrices. Random variables are indicated by a tilde. Let $\re^{n}$ denote $n$-dimensional Euclidean space and $\re^{n}_+$ the nonnegative orthant of $\re^{n}$. The set of real $m\times n$ matrices is denoted by $\re^{m\times n}$. Let $\N$ denote the set of natural numbers and $\N_+$ the set of all positive integers. For a given cone $\mcal{K}$, its dual cone is denoted as $\mcal{K}^{\star}$.  The operator $(\cdot)^\tp$ denotes the transpose operator.
The operator $\operatorname{vec}(\cdot)$ denotes matrix vectorization, and $\lip A, B \rip$ denotes the inner product between matrices $A$ and $B$.

\section{Preliminary}\label{sc:pre}
We provide a concise overview of the main concepts, definitions, and techniques used in our paper. These include COP and CP matrices, truncated multi-sequences and nonnegative quadratic forms, as well as moment and sum-of-square(SOS) relaxations. The results in this section are established in the literature; we therefore omit proofs and refer the reader to the cited sources for details. 

\subsection{Copositive and Completely Positive Matrices}
For the dimension $n\in \N$, let $\mcal{S}^{n}$ denote the set of $n$-by-$n$ symmetric matrices.
A matrix $W\in \mcal{S}^{n}$ is {\it copositive} (COP), denoted as $W\succcurlyeq_{co}0$, if
\[ \b{x}^{\tp} W \b{x} \ge 0,\quad \mbox{for all}\ x\in \re^{n}_+. \]
An equivalent characterization of copositivity can be given in terms of the simplex set, $\Delta^{n}$ in $\re^{n}$,
\begin{equation} \label{eq:simplex_n} 
\Delta^{n}:=\{\b{x}\in\re^{n}_+ \mid  \b{e}^{\tp}\b{x}= 1,\ 1-\b{x}^{\tp}\b{x} \ge 0\}, \end{equation}
where $\b{e}$ is the all-ones vector. 
Specifically, $W$ is COP if and only if $\b{x}^{\tp} W \b{x}\ge 0$ for all $\b{x}\in \Delta^{n}$. While the condition $ 1-\b{x}^{\tp}\b{x} \ge 0$ in (\ref{eq:simplex_n}) is not strictly necessary to define the simplex set, it facilitates the construction of tighter semidefinite relaxations. More discussions regarding this are given in Section~\ref{sc:pre:relx}.
{We say $W$ is {\it strictly copositive} if $\b{x}^{\tp} W \b{x} > 0$ for all $\bx\in\Delta^n$, denoted as $W\succ_{co} 0$.}

The dual cone of the COP matrix cone is the cone of {\it completely positive} (CP) matrices.
Corresponding to the definition of a COP matrix on the simplex (\ref{eq:simplex_n}), a matrix $X\in \mcal{S}^{n}$ is said to be CP, denoted by $X\cpge 0$, if there exist vectors $\b{x}^{(1)}\ddd \b{x}^{(r)} \in \Delta^{n}$ and positive scalars $\lmd_1\ddd \lmd_r$ and such that 
\[X = \lmd_1 \cdot \b{x}^{(1)}{\b{x}^{(1)}}^{\tp} + \cdots + \lmd_k \cdot \b{x}^{(r)}{\b{x}^{(r)}}^{\tp}.\]  
Throughout this paper, we consider the definitions of a COP or CP matrix based on the simplex set. Accordingly, the corresponding SOS and moment relaxations also regard to the simplex (\ref{eq:simplex_n}).

\subsection{Truncated Multi-sequences and Nonnegative Quadratic Forms}
\label{sc:tms}

We now introduce the concepts of truncated multi-sequences (tms) and nonnegative quadratic forms, which play a key role in our subsequent analysis. 

Let \(n \in \mathbb{N}\) denote the dimension and \(d \in \mathbb{N}\) the degree. Define:
\[
\N^{n}_{d,\mathrm{hom}} \;=\; \bigl\{ \b{\alpha} = (\alpha_1, \ldots, \alpha_n) \in \N^n 
\;\big|\;
\alpha_1 + \cdots + \alpha_n = d 
\bigr\},\quad
\N^{n}_d = \bigcup\limits_{i=1}^d \N^{n}_{i,\mathrm{hom}}.
\]
A {\it truncated multi-sequence} (tms) of degree $d$ is a sequence labelled by integer tuples in $\N^{n}_d$,
and a {\it homogeneous truncated multi-sequence} (htms) of degree $d$ is a sequence labelled by integer tuples in $\N^{n}_{d,hom}$.
The tms and htms can be generated by points in the simplex set $\Dt^n$.
For each $\b{\alpha}=(\alpha_1\ddd \alpha_{n})\in\N^{n}_d$ and the vector $\b{x}\in\Dt^n$,
define
${\b x}^{{\b\alpha}}:=x_1^{\alpha_1}x_2^{\alpha_2}\dots x_{n}^{\alpha_{n}}.$
The tms and htms of degree $d$ generated by $\b{x}$ are then given by, 
\[[\b{x}]_d:=(\b x^{\b{\alpha}})_{\b{\alpha}\in \N^{n}_d} \in \re^{\N^{n}_d},\quad [\b{x}]^{\hom}_d:=({\b x}^{\b{\alpha}})_{\b{\alpha}\in \N^{n}_{d,hom}} \in \re^{\N^{n}_{d,\hom}}.\]
For a tms $\b{\xi}$, we can also label its component by $\xi_{\b{\alpha}}$ or explicitly as $\xi_{\alpha_1\alpha_2\dots\alpha_n}$ which takes the value ${x}^{\b{\alpha}}$ if $\b{\xi}$ is generated by $\b{x}\in\Dt^n$.
We refer to Example~\ref{ex:pop_appendix} in the E-Companion \ref{sc:sup} for an exposition of tms.

Consider the cone of degree $2$ htms supported on $\Delta^{n}$, denoted by
\begin{equation}
\label{eq:R2hom}
\mcal{R}[\Delta^n]_2^{\mathrm{hom}}
\;:=\;
\Bigl\{
\lambda_1 [\b{x}^{(1)}]_2^{\mathrm{hom}} 
+\cdots+ 
\lambda_r [\b{x}^{(r)}]_2^{\mathrm{hom}}
\;\Big|\; 
r \in \mathbb{N}_+,\ 
\lambda_i > 0,\ 
\b{x}^{(i)} \in \Delta^n
\Bigr\}.
\end{equation}
Then $\mmc{R}[\Delta^{n}]^{\hom}_2$ is the conic hull of the htms $[\b{x}]_2^{\hom}$ for $\b{x}\in\Dt^n$.
Its dual cone is the cone of {\it quadratic forms} that are nonnegative on $\Dt^n$, denoted as $\mmc{P}[\Delta^{n}]^{\hom}_2$.
\begin{equation}\label{eq:P2hom}
\mmc{P}[\Delta^{n}]^{\hom}_2:=\{p\in\re[x]^{\hom}_2\mid p(\b{x})\ge0,\ \forall \b{x}\in\Delta^{n}\},\end{equation} where $\re[{x}]^{\hom}_2$ denotes the space of homogeneous polynomials of degree $2$ with coefficients in $\re$.

We say an htms of degree 2, i.e., $\b{\xi} \in \re^{\N^n_{2,\hom}} $ admits a {\it representing measure} $\mu$ support on $\Delta^{n}$ if $\xi_{\alpha} = \int_{\Dt^n} x^{\b{\alpha}} d\mu$ for all $\b{\alpha} \in {\N^n_{2,\hom}}$. We have the following lemma which will be used in our latter proof as a characterization of an htms in $\mmc{R}[\Delta^{n}]^{\hom}_2$.
\begin{lemma}
$\b{\xi}$ admits a representing measure if and only if $\b{\xi} \in \mmc{R}[\Delta^{n}]^{\hom}_2$.
\end{lemma}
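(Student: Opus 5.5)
The plan is to prove the two implications separately; the reverse direction is immediate and the forward direction reduces to a finite-dimensional Carathéodory argument. Throughout I use the definition of $\mmc{R}[\Delta^{n}]^{\hom}_2$ in \eqref{eq:R2hom} as the set of positive combinations of $[\b{x}]_2^{\hom}$ with $\b{x}\in\Dt^n$. I will also use that $\Dt^n$ is compact; it is closed and bounded, and the constraint $1-\b{x}^\tp\b{x}\ge 0$ is redundant there. I read the statement as concerning nonzero $\b{\xi}$, or equivalently nonzero measures, since the cone in \eqref{eq:R2hom} uses $r\ge1$ and $\lmd_i>0$ and so does not contain $0$. The convention for $0$ can be handled by allowing the zero measure on one side and $r=0$ on the other.

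\emph{($\Leftarrow$)} Suppose $\b{\xi}=\sum_{i=1}^r \lmd_i[\b{x}^{(i)}]_2^{\hom}$ with $\lmd_i>0$ and $\b{x}^{(i)}\in\Dt^n$. I take the atomic measure $\mu=\sum_i \lmd_i\delta_{\b{x}^{(i)}}$, which is supported on $\Dt^n$. Then $\int \b{x}^{\b\alpha}\,d\mu=\sum_i\lmd_i(\b{x}^{(i)})^{\b\alpha}=\xi_{\b\alpha}$ for every $\b\alpha\in\N^n_{2,\hom}$, so $\mu$ is a representing measure.

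\emph{($\Rightarrow$)} Suppose $\mu$ is a nonzero Borel measure on $\Dt^n$ representing $\b{\xi}$. I first normalize: moment integrals of degree-2 monomials are finite because $\Dt^n$ is compact, and $\mu(\Dt^n)$ is finite as well. I would check finiteness of the total mass explicitly, since only degree-2 moments are prescribed. On $\Dt^n$ we have $\b{x}^\tp\b{x}\ge 1/n>0$, so $\int \b{x}^\tp\b{x}\,d\mu<\infty$ forces $\mu(\Dt^n)<\infty$. Setting $m=\mu(\Dt^n)>0$ and $\nu=\mu/m$ gives a probability measure with $\b{\xi}/m=\int [\b{x}]_2^{\hom}\,d\nu$.

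The key step is then to show that the barycenter of the pushforward of $\nu$ under the continuous map $\b{x}\mapsto[\b{x}]_2^{\hom}$ lies in the convex hull of the compact set $K=\{[\b{x}]_2^{\hom}:\b{x}\in\Dt^n\}$. Here $K$ is compact because it is the continuous image of a compact set. Its convex hull is compact in finite dimension by Carathéodory's theorem, hence closed. If the barycenter were outside this hull, a separating hyperplane would give a linear functional $\ell$ and a constant $c$ with $\ell\le c$ on $K$ but $\ell(\b{\xi}/m)>c$. This contradicts $\ell(\b{\xi}/m)=\int \ell([\b{x}]_2^{\hom})\,d\nu\le c$. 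Finally, Carathéodory writes $\b{\xi}/m$ as a convex combination of finitely many points $[\b{x}^{(i)}]_2^{\hom}$. Multiplying by $m$ and discarding zero weights gives $\b{\xi}\in\mmc{R}[\Delta^{n}]^{\hom}_2$.

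The main obstacle is this barycenter/closedness step, namely passing from an arbitrary measure to a finite atomic one. It relies on compactness of $\Dt^n$, which makes the convex hull of $K$ closed, and on the finite-mass observation above. Without these, the closure of the conic hull could strictly contain the conic hull itself. Everything else in the argument is bookkeeping.
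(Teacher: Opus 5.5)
Your proof is correct. The paper gives no argument of its own: it defers to Proposition~3.3 of Nie's work on the $\mathcal{A}$-truncated $K$-moment problem. That is a general result for finite monomial sets and compact $K$, belonging to the circle of ideas around the Richter--Tchakaloff/Bayer--Teichmann theorem on finitely atomic representing measures.

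You instead prove the needed special case directly. You normalize to a probability measure, place the barycenter of the pushforward in the compact set $\operatorname{conv}\{[\b{x}]_2^{\hom}:\b{x}\in\Delta^n\}$ by strict separation, and finish with Carath\'eodory. This is the standard proof of Tchakaloff's theorem for compactly supported measures. The citation buys generality. Your route is self-contained, gives an explicit atom bound, and makes visible the two ingredients that do the work: compactness of $\Delta^n$ (closedness of the hull) and a quadratic form strictly positive on $\Delta^n$ (finite mass). The latter is akin to the positivity (``fullness'') hypothesis in the general theory.

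Two small remarks. First, finiteness of the degree-2 integrals does not follow from compactness, since an infinite measure on a compact set can have infinite moments. It follows from the $\xi_{\b{\alpha}}$ being real numbers, which is what your argument actually uses. Second, since $(\b{e}^{\tp}\b{x})^2\equiv 1$ on $\Delta^n$, the total mass equals $\int(\b{e}^{\tp}\b{x})^2\,d\mu$, a fixed linear combination of the entries of $\b{\xi}$. This gives the normalization at once, and it shows all points $[\b{x}]_2^{\hom}$ lie in one affine hyperplane, so $\binom{n+1}{2}$ atoms suffice.

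Your point about $\b{\xi}=0$ is also right. The zero measure represents it, but the cone as defined (with $r\ge 1$ and $\lambda_i>0$) excludes it, so the lemma as stated needs the convention you propose.
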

This lemma is established in \cite[Proposition~3.3]{nie2014truncated}, and we omit the proof here. It equivalently implies that the cone $\mmc{R}[\Delta^{n}]^{\hom}_2$ is the cone of second-order moments by the definition of a representing measure. 

\subsection{Moment and Sum-of-Squares Relaxations}\label{sc:pre:relx}
In this section, we introduce semidefinite relaxations for the cones $\mmc{P}[\Delta^{n}]^{\hom}_2$ and $\mmc{R}[\Delta^{n}]^{\hom}_2$, which play a central role in solving the max-min bilinear optimization problem over the CP cones.

We begin with the SOS approximation for the cone of nonnegative quadratic forms, $\mmc{P}[\Delta^{n}]^{\hom}_2$.
For a variable $\b{x}\in\re^n$ and an integer $k\in\N$, let $\re[\b{x}]_{2k}$ denote the set of polynomials in $\b{x}$ whose degrees are at most $2k$.
A polynomial $\sigma\in\re[\b{x}]_{2k}$ is a SOS if there exists polynomials $\sigma_1\ddd \sigma_l\in\re[\b{x}]_k$ such that
\[\sigma = \sigma_1^2+\sigma_2^2+\dots+\sigma_l^2.\]
We write $\Sigma[\b{x}]_{2k}$ to denote the set of all SOS polynomials in $\re[\b{x}]_{2k}$. Note that if a polynomial is SOS, then it is nonnegative.
Recall that the standard simplex set $\Delta^{n}$ can be described as in (\ref{eq:simplex_n}). To approximate the cone of nonnegative quadratic forms on the standard simplex set $\Delta^{n}$, $\mmc{P}[\Delta^{n}]^{\hom}_2$, we define a convex cone, the sum of {\it ideal} and {\it quadratic module}, constructed based on SOS and the polynomial constraints that specify the standard simplex set $\Delta^{n}$, as follows. Specifically, the sum of {\it ideal} and {\it quadratic module} of the representation for $\Delta^{n}$ in the form of polynomials of $\bx \in \Delta^{n}$ for degree $2k\ ( k \in \N_+)$ is defined as (by abuse of notation, let $x_0\coloneqq 1$ and $x_{n+1}\coloneqq 1-\b{x}^{\tp}\b{x}$)
\begin{equation*}
\begin{array}{c}
\iq[\Delta^{n}]_{2k}:= \left\{ (1-e^{\tp}\b{x}) q  + \sum\limits_{i=0}^{n+1} x_i \sigma_i \left| \begin{array}{c}
  q\in\re[x]_{2k-1}, \sigma_0\in\Sigma[x]_{2k},\\
  \sigma_1\ddd \sigma_{n+1}\in\Sigma[x]_{2k-2}
\end{array}
\right. \right\}.
\end{array}\end{equation*}
By definition, for each $k \in \N_+$, the set $\iq[\Delta^{n}]_{2k}$ is a closed convex cone contained in $\re[x]_{2k}$. In the next Lemma we will show that $\iq[\Delta^{n}]_{2k}$ serves as \emph{an inner approximation} to $\mmc{P}[\Delta^{n}]^{\hom}_2$.

\begin{lemma}\label{lem:iqandp} Denote $ \iq[\Delta^{n}] = \cup_{k=1}^{\infty} \iq[\Delta^{n}]_{2k}$.
 \begin{enumerate} 
     \item Every polynomial $p$ in $\iq[\Delta^{n}]$ is nonnegative on $\Dt^{n}$. That is $\iq[\Delta^{n}]\subseteq \mmc{P}[\Delta^{n}]^{\hom}_2$ or $\iq[\Delta^{n}]_{2k}\subseteq \mmc{P}[\Delta^{n}]^{\hom}_2$ for all $k\in \N_+$.
     \item (Putinar's Positivstellensatz \cite{putinar1993positive}) Conversely, if $p(x) > 0$ for all $x\in\Dt^n$, then $p\in \iq[\Delta^{n}]_{2k}$ for all sufficiently large $k$.
 \end{enumerate}
\end{lemma}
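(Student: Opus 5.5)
Part (1) is a direct pointwise evaluation. Take $p \in \iq[\Delta^{n}]_{2k}$ and write it as
\[
p = (1-\b{e}^{\tp}\bx)\, q + \sum_{i=0}^{n+1} x_i \sigma_i,
\]
with $q$ and the $\sigma_i$ as in the definition. Fix $\bx \in \Dt^n$ and look at each term.
\begin{itemize}
\item The ideal term vanishes, since $\b{e}^{\tp}\bx = 1$ on $\Dt^n$.
\item Each $\sigma_i(\bx)\ge 0$, because an SOS polynomial is nonnegative everywhere.
\item The multipliers are nonnegative on $\Dt^n$. Indeed $x_0 = 1$; $x_i \ge 0$ for $1\le i\le n$ since $\Dt^n \subseteq \re^n_+$; and $x_{n+1} = 1-\bx^{\tp}\bx \ge 0$ by the defining inequality in (\ref{eq:simplex_n}).
\end{itemize}
Hence $p(\bx)\ge 0$.

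To phrase this as the inclusion $\iq[\Delta^{n}]_{2k}\subseteq \mmc{P}[\Delta^{n}]^{\hom}_2$, one point needs care. Elements of $\iq$ are not in general homogeneous quadratics, so the inclusion must be read for a quadratic form $p\in\re[x]^{\hom}_2$ that admits such a representation in $\re[x]_{2k}$. For that reading the pointwise argument above is all that is required. The union over $k$ then follows immediately.

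Part (2) is obtained from Putinar's Positivstellensatz, and the steps are as follows.
\begin{enumerate}
\item Check the Archimedean condition for the generators. The polynomial $x_{n+1} = 1-\bx^{\tp}\bx$ lies in the quadratic module itself, taking $\sigma_{n+1}=1$ and all other terms zero. Its superlevel set $\{\bx : 1-\bx^{\tp}\bx\ge0\}$ is the unit ball, which is compact. So the quadratic module generated by $x_1,\dots,x_{n+1}$ together with the ideal generated by $1-\b{e}^{\tp}\bx$ is Archimedean. This is exactly the reason the redundant constraint $1-\bx^{\tp}\bx\ge 0$ was included in (\ref{eq:simplex_n}).
\item The equality $\b{e}^{\tp}\bx=1$ is handled either by including $\pm(1-\b{e}^{\tp}\bx)$ as generators, or by noting that Putinar's theorem holds equally for ideal plus quadratic module.
\item Since $p>0$ on the compact semialgebraic set $\Dt^n$, Putinar's theorem yields a representation
\[
p = (1-\b{e}^{\tp}\bx)\, q + \sum_{i=0}^{n+1} x_i\sigma_i
\]
with some finite degrees. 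Choose $k$ so that $2k$ bounds $\deg q + 1$ and $\deg(x_i\sigma_i)$ for all $i$; then $p\in\iq[\Delta^{n}]_{2k}$.
\item For "all sufficiently large $k$", use that the truncations are nested, $\iq[\Delta^{n}]_{2k}\subseteq\iq[\Delta^{n}]_{2k+2}$, since any SOS of degree at most $2k$ is also an SOS of degree at most $2k+2$.
\end{enumerate}

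The only step requiring real attention is the Archimedean verification, and the inclusion of $1-\bx^{\tp}\bx$ among the generators resolves it directly.
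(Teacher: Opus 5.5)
Your proof is correct and follows the paper's route: the paper derives part 1 directly from the definition of $\iq[\Delta^n]_{2k}$ evaluated on $\Delta^n$ and cites Putinar's Positivstellensatz for part 2. Your remark that the inclusion into $\mathcal{P}[\Delta^n]^{\hom}_2$ must be read for quadratic forms admitting such a representation is a fair correction of the statement's loose phrasing.

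One minor point on your aside, which does not affect the argument: the ball constraint is not what makes the module Archimedean here. Modulo the ideal generated by $1-\b{e}^{\tp}\bx$, both $1+x_i$ and $1-x_i\equiv\sum_{j\ne i}x_j$ already lie in the module, and $2(1-x_i^2)=(1+x_i)^2(1-x_i)+(1-x_i)^2(1+x_i)$, so $n-\bx^{\tp}\bx$ lies in it without that constraint. The paper in fact motivates $1-\bx^{\tp}\bx\ge 0$ by tighter relaxations, not by Archimedeanity.
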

The first result is by the definition of $\iq[\Delta^{n}]_{2k}$ and $\bx \in \Delta^{n}$, and the second result is according to \cite{putinar1993positive}, which is called Putinar's Positivstellensatz. Even if a polynomial $p$ is nonnegative but not strictly positive on $\Delta^{n}$, there exist conditions that guarantee $p\in\iq[\Delta^{n}]_{2k}$, see \cite{de2011lasserre,nie2014optimality,lasserre2009convexity} and \cite{nie2023moment}.

We next consider semidefinite relaxations for the cone of htms $\mcal{R}[\Delta^n]^{\hom}_2$, which is essentially the dual perspective of the SOS approximation to $\mmc{P}[\Delta^{n}]^{\hom}_2$.
For an integer $k\in\N$, let $\b{\xi}\in\re^{\N^{n}_{2k}}$ be a tms.
The {\it Riesz functional} $\mmc{L}_{\b{\xi}}$ is a linear functional that maps polynomials in $\re[\b{x}]_{2k}$ to $\re$ such that
\[\mmc{L}_{\xi}(x^{\b{\alpha}}) = \xi_{\b{\alpha}},\quad \forall\, \b{\alpha}\in\N^{n}_{2k}.\] 
% \rd{what is $y$? How does $\zeta_{\alpha}$ relate to $z^{\alpha}$?}
% Then, for a given polynomial $g(z)\in\re[z]$ whose degree equals $d_0\le 2k$, we define the $2k$ {\it moment matrix}, {\it localizing vector} and {\it localizing matrices} generated by $\zeta$:
Then, for each $\b{\xi}$ and polynomial expressions in $\b{x}$, one can construct the {\it moment matrix} and various {\it localizing matrices}. In particular, define
%Then, for any given vector $\b{z} \in \mcal{R}[\Delta^n]^{\hom}_2$ and any degree $k\ge 1$, there exists a tms $\xi$ such that $z_{\b{\alpha}} = \xi_{\b{\alpha}}$ for all $\b{\alpha}\in \N^{n}_{2,\hom}$, and
\be\label{eq:Sk} 
\b{\xi} \in \mathscr{S}[\Dt^n]_{2k} := \left\{ \b{\xi}\in\re^{\N^{n}_{2k}} \left|
\begin{array}{c}
M_{k}[\b{\xi}]\succeq 0,\  \mathscr{V}^{(2k)}_{1-e^{\tp}x}[\b{\xi}] = 0,\  L^{(k)}_{1-x^{\tp}x}[\b{\xi}] \succeq 0,\\
L^{(k)}_{x_i}[\b{\xi}] \succeq 0,\quad \forall i=1\ddd n
\end{array}
\right.
\right\}.\ee
where $M_{k}[\b{\xi}]$ is the moment matrix generated by $\b{\xi}$, and the other terms are localizing vectors/matrices defined specifically as follows.
\[\begin{aligned}
M_{k}[\b{\xi}]:=\mmc{L}_{\b{\xi}}([\b{x}]_{k}[\b{x}]_{k}^{\tp}),\quad & \mathscr{V}^{(2k)}_{1-\b{e}^{\tp}\b{x}}[\b{\xi}]:=\mmc{L}_{\b{\xi}}((1-\b{e}^{\tp}\b{x})\cdot [\b{x}]_{2k-1}),\\
L^{(k)}_{x_i}[\b{\xi}]:=\mmc{L}_{\xi}(x_i\cdot [\b{x}]_{k-1}[\b{x}]_{k-1}^{\tp}),\quad &
L^{(k)}_{1-\b{x}^{\tp}\b{x}}[\b{\xi}]:=\mmc{L}_{\xi}((1-\b{x}^{\tp}\b{x})\cdot [\b{x}]_{k-1}[\b{x}]_{k-1}^{\tp}),\end{aligned}\]
where the image of the Riesz functional $\mmc{L}_{\xi}$ for vectors and matrices of polynomials are defined entrywise.
We refer to Example~\ref{ex:pop_appendix} in E-Companion~\ref{sc:sup} for an exposition of moment and localizing matrices.
One can show that $\mathscr{S}[\Delta^n]_{2k}$ is a closed convex cone, and is the dual cone of $\iq[\Delta^{n}]_{2k}$, i.e., $(\iq[\Delta^{n}]_{2k})^{\star} = \mathscr{S}[\Delta^n]_{2k}$. The following lemma implies that $\mathscr{S}[\Delta^n]_{2k}$ is a relaxation of $\mcal{R}[\Delta^n]^{\hom}_2$.
This can be shown by the observation that for every $\bx\in \Delta^n$, if $\b{\xi} = [\bx]_{2k}$ so that $ \b{\xi}|^{\hom}_2 \in \mcal{R}[\Delta^n]^{\hom}_2$, then are conditions in (\ref{eq:Sk}) are satisfied.
\begin{lemma}\label{lem:rands}
    For every $k\in \N_+$, $\mathscr{S}[\Delta^n]_{2k}$ is an outer relaxation of $\mcal{R}[\Delta^n]^{\hom}_2$, in the sense that 
$$\mcal{R}[\Delta^n]^{\hom}_2 \subseteq \{ \b{\xi}|^{\hom}_2 : \b{\xi} \in \mathscr{S}[\Delta^n]_{2k}\}.$$
\end{lemma}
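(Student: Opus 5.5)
The plan is to prove the inclusion generator by generator and then pass to conic combinations, using the fact that $\mathscr{S}[\Delta^n]_{2k}$ is a convex cone. Fix $k\in\N_+$ and take any $\b{z}\in\mcal{R}[\Delta^n]^{\hom}_2$. By the definition (\ref{eq:R2hom}) we can write $\b{z}=\sum_{i=1}^r\lambda_i[\b{x}^{(i)}]^{\hom}_2$ with $\lambda_i>0$ and $\b{x}^{(i)}\in\Delta^n$. The candidate lift is the tms
\[
\b{\xi}:=\sum_{i=1}^r\lambda_i[\b{x}^{(i)}]_{2k}\in\re^{\N^n_{2k}}.
\]
Restricting $[\b{x}]_{2k}$ to the degree-$2$ homogeneous indices gives exactly $[\b{x}]^{\hom}_2$. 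Since restriction is linear, $\b{\xi}|^{\hom}_2=\b{z}$, and it only remains to show $\b{\xi}\in\mathscr{S}[\Delta^n]_{2k}$.

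\textbf{Step 1 (generators satisfy the constraints).} For a single point $\b{x}\in\Delta^n$ with $\b{\xi}=[\b{x}]_{2k}$, the Riesz functional is evaluation at $\b{x}$, i.e.\ $\mmc{L}_{\b{\xi}}(p)=p(\b{x})$. This holds provided the constant monomial is handled consistently: $\xi_{\b{0}}=1$ is implicitly needed, because $[\b{x}]_k$ contains the entry $1$ and $\mathscr{V}$ involves the constant $1$. I will therefore treat the index $\b{0}$ as included with value $1$ for generators, so that the moment matrix is well defined. Each constraint then follows directly:
\[
M_k[\b{\xi}]=[\b{x}]_k[\b{x}]_k^{\tp}\succeq0,\qquad
L^{(k)}_{x_i}[\b{\xi}]=x_i[\b{x}]_{k-1}[\b{x}]_{k-1}^{\tp}\succeq0 \ \text{ since } x_i\ge0,
\]
\[
L^{(k)}_{1-\b{x}^{\tp}\b{x}}[\b{\xi}]=(1-\b{x}^{\tp}\b{x})[\b{x}]_{k-1}[\b{x}]_{k-1}^{\tp}\succeq0,\qquad
\mathscr{V}^{(2k)}_{1-\b{e}^{\tp}\b{x}}[\b{\xi}]=(1-\b{e}^{\tp}\b{x})[\b{x}]_{2k-1}=0,
\]
where the last two use $1-\b{x}^{\tp}\b{x}\ge0$ and $\b{e}^{\tp}\b{x}=1$ from (\ref{eq:simplex_n}).

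\textbf{Step 2 (conic combinations).} Every map $\b{\xi}\mapsto M_k[\b{\xi}],\,L^{(k)}_{g}[\b{\xi}],\,\mathscr{V}^{(2k)}_{g}[\b{\xi}]$ is linear in $\b{\xi}$. Positive combinations of PSD matrices are PSD, and positive combinations of zero vectors are zero. Hence the weighted sum $\b{\xi}=\sum_i\lambda_i[\b{x}^{(i)}]_{2k}$ satisfies all the constraints in (\ref{eq:Sk}), so $\b{\xi}\in\mathscr{S}[\Delta^n]_{2k}$. This proves the inclusion.

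An alternative route is by duality: $\mathscr{S}[\Delta^n]_{2k}=(\iq[\Delta^n]_{2k})^\star$ together with Lemma~\ref{lem:iqandp}(1). The direct construction above is simpler, so I will use it.

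\textbf{Main obstacle.} The only delicate point is bookkeeping in the index set. As written, $\N^n_d$ starts at degree $1$, while the moment matrix needs a zeroth moment. I will fix the convention $\xi_{\b{0}}=\sum_i\lambda_i$, the total mass, and check that the identification of $\b{\xi}|^{\hom}_2$ with $\b{z}$ is unaffected by this choice.
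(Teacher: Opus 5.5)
Your proposal is correct and follows the same route as the paper. The paper justifies the lemma only by the remark that $[\bx]_{2k}$ satisfies every constraint in (\ref{eq:Sk}) for each $\bx\in\Delta^n$.

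Your write-up adds two details the paper leaves implicit. The first is the explicit conic-combination step. The second is the convention of including the index $\b{0}$ with $\xi_{\b{0}}=\sum_i\lambda_i$, which repairs the apparent typo that $\N^n_d$ starts at degree $1$.
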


Moreover, for any positive integer $k$, both cones $\iq[\Delta^{n}]_{2k}$ and $\mathscr{S}[\Delta^n]_{2k}$ admit representations via SDPs. Indeed, a polynomial $\sigma\in\re[\b{x}]_{2d}$ is a SOS if and only if there exists a positive semidefinite matrix $A \in \re^{\binom{n+1+d}{d}\times \binom{n+1+d}{d}}$ such that $\sigma = [\b{x}]_d^{\tp}A [\b{x}]_d$. Meanwhile, each of the matrices $M_{k}[\b{\xi}],\mathscr{V}^{(2k)}_{1-\b{e}^{\tp}\b{x}}[\b{\xi}],L^{(1)}_{x_i}[\b{\xi}],L^{(1)}_{1-\b{x}^{\tp}\b{x}}[\b{\xi}]$ in (\ref{eq:Sk}) is linear in $\b{\xi}$, making $\mathscr{S}[\Delta^n]_{2k}$ SDP-representable.
By Lemmas \ref{lem:iqandp} and \ref{lem:rands} the SDP-based relaxations provide an \emph{inner} approximation for optimization problems over  $\mmc{P}[\Delta^{n}]^{\hom}_2$, and an \emph{outer} approximation for those over $\mcal{R}[\Delta^n]^{\hom}_2$. 
In E-Companion~\ref{sc:Moment-SOS_appendix}, we review the hierarchy of Moment-SOS relaxations for solving polynomial optimization problems.

\section{Max-Min Bilinear Optimization over the CP Cones and COP-CP Reformulation}\label{sec:formulation}
{We begin by examining the following max-min bilinear optimization problem defined over CP cones:
\begin{eqnarray}\label{eqn:conic}
  \mcal{Z}^*_{\max\min} =  \max\limits_{X \in  \mcal K_{cp}^x} \min\limits_{Y \in  \mcal K_{cp}^y}  \lip \mcal{Q} (X), Y \rip.
\end{eqnarray}
In the above, feasible sets
$\mathcal K_{cp}^x \in \mcal S^n$ and $\mathcal K_{cp}^y \in \mcal S^m$ are given by
\begin{equation*}\label{eqn:cpconex}
\begin{array}{cc}
\mathcal K_{cp}^x = \left\{\begin{array}{cc} X \in\mcal{S}^n & \left| \begin{array}{ll}
\lip A_i, X \rip = {b}_i, &\quad i =  1,..., I\\
X \succcurlyeq_{cp} 0 \end{array}\right.
\end{array}\right\}
\end{array},
\end{equation*}
\begin{equation*}\label{eqn:cpconey}
\begin{array}{rll}
\mathcal K_{cp}^y = \left\{\begin{array}{ll} Y \in \mcal{S}^m & \left| \begin{array}{ll}
\lip B_j, Y \rip = {c}_j, &\quad j = 1,...,J \\
Y\succcurlyeq_{cp} 0
 \end{array}\right.
\end{array}\right\}
\end{array},
\end{equation*}
and $\mcal{Q}:\re^{n\times n}\mapsto \re^{m\times m} $ is a linear operator.
Typically, one also interests in the associated min-max problem:
\begin{eqnarray}\label{eqn:conic_minmax}
\begin{array}{lll}
   \mcal{Z}_{\min\max}^* =   \min\limits_{Y \in  K_{cp}^y} \max\limits_{X \in  K_{cp}^x} \lip \mcal{Q} (X), Y \rip.
    \end{array}
\end{eqnarray}}

For the rest of this section, we first introduce a linear programming reformulation over the COP-CP cone for (\ref{eqn:conic}) and (\ref{eqn:conic_minmax}).
Then, in Sections~\ref{sec:game} and \ref{sec:RO}, we introduce two applications of max-min bilinear CPPs.

\subsection{A COP-CP Conic Reformulation}\label{sec:conic}
We next reformulate \eqref{eqn:conic} as a single-stage linear program over the Cartesian product of the copositive and completely positive cones, known as a \emph{COP-CP program}. 
For a given $X\in \mcal{K}^x_{cp}$, the inner minimization in \eqref{eqn:conic} is
\be\label{eq:inner_min}
\begin{array}{cl}
\min\limits_{Y \in  \mcal{S}^m} & \lip \mcal{Q} (X), Y \rip\\
\st & \lip B_i, Y \rip = {c}_i, \quad i = 1,...,J, \\
& Y\succcurlyeq_{cp} 0.
\end{array}
\ee
By introducing dual variables $\b z\in\re^J$ and $Z\in \mcal{S}^m$ with $Z\succcurlyeq_{co} 0$, the Lagrangian of (\ref{eq:inner_min}) is
\[ L(Y,\bz,Z;X) = \langle \mcal{Q}(X),Y \rangle - \sum_{i=1}^J z_i (\langle B_i,Y \rangle - c_i) - \langle Y,Z \rangle. \]
Thus, the Lagrangian dual problem of (\ref{eq:inner_min}) reads
\be\label{eq:inner_min_dual}
\begin{array}{cl}
\max\limits_{\b{z}\in\re^J,Z\in \mcal{S}^m} & \b{c}^{\tp}\b{z} \\
\st & \displaystyle \mcal{Q}(X) - \sum_{j=1}^J z_j B_j = Z,\quad Z\succcurlyeq_{co} 0.
\end{array}
\ee
Combining the outer maximization and the dual of the inner minimization yields the following linear program over COP-CP cone:
\be\label{eq:lp_max}
\begin{array}{ccl}
\mcal{W}_{\max}^*\coloneqq& \max\limits_{X,\b{z},Z} & \b{c}^{\tp}\b{z} \\
& \st & \lip A_i, X \rip = {b}_i, \quad i = 1,..., I,\\
&    & \displaystyle \mcal{Q}(X) - \sum_{j=1}^J z_j B_j = Z,\\
&    & X\succcurlyeq_{cp} 0,\ Z\succcurlyeq_{co} 0,\ X\in\mcal{S}^n,\ \b{z}\in{\re^J},\ Z\in\mcal{S}^m.
\end{array}
\ee
Similarly, for the associated min-max problem (\ref{eqn:conic_minmax}), 
its COP-CP linear program counterpart is
\be\label{eq:lp_min}
\begin{array}{ccl}
\mcal{W}_{\min}^*\coloneqq & \min\limits_{Y,\b{w},W} & \b{b}^{\tp}\b{w} \\
& \st & \lip B_j, Y \rip = {c}_j, \quad j = 1,..., J,\\
&    & \displaystyle \sum_{i=1}^I w_i A_i - \mcal{Q}^{\star}(Y) = W,\\
&    & Y\succcurlyeq_{cp} 0,\ W\succcurlyeq_{co} 0,\ Y\in\mcal{S}^m,\ {\b{w}\in\re^I},\ W\in\mcal{S}^n.
\end{array}
\ee
{Here, $\mcal{Q}^{\star}$ denotes the adjoint operator of $\mcal{Q}$.
The following proposition establishes an ascending chain of optimal values for the above problems.}

\begin{proposition}\label{prop:maxmin_duality}
The maximization problem (\ref{eq:lp_max}) and the minimization problem (\ref{eq:lp_min}) are dual to each other, and
\[ \mcal{W}_{\max}^* \le \mcal{Z}^*_{\max\min} \le \mcal{Z}^*_{\min\max} \le \mcal{W}_{\min}^*.\]
\end{proposition}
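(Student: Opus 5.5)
We prove the two parts of Proposition~\ref{prop:maxmin_duality} separately: first that (\ref{eq:lp_max}) and (\ref{eq:lp_min}) are conic duals of each other, then the chain of inequalities. The chain follows from three elementary facts: weak duality for the inner problem, the max-min/min-max inequality, and the same weak duality applied to the inner problem of the min-max.

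\emph{Duality of (\ref{eq:lp_max}) and (\ref{eq:lp_min}).} We form the Lagrangian of (\ref{eq:lp_max}), eliminating $Z$ via $Z=\mcal{Q}(X)-\sum_j z_jB_j$. Attach multipliers $w_i$ to $\lip A_i,X\rip=b_i$, a matrix $Y$ to the constraint $\mcal{Q}(X)-\sum_j z_jB_j\succcurlyeq_{co}0$, and $W$ to $X\succcurlyeq_{cp}0$. The cone $\mcal{S}^m$-COP is dual to CP, and CP is dual to COP, so $Y\succcurlyeq_{cp}0$ and $W\succcurlyeq_{co}0$. The Lagrangian is
\[
\b c^\tp\b z+\sum_i w_i(b_i-\lip A_i,X\rip)+\lip Y,\mcal{Q}(X)-\textstyle\sum_j z_jB_j\rip+\lip W,X\rip .
\]
Taking the supremum over the free variable $\b z$ forces $\lip B_j,Y\rip=c_j$. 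Taking it over the free variable $X$ forces $\sum_i w_iA_i-\mcal{Q}^\star(Y)=W$. What remains is the objective $\b b^\tp\b w$, which is exactly (\ref{eq:lp_min}). Symmetrically, dualizing (\ref{eq:lp_min}) returns (\ref{eq:lp_max}).

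\emph{First inequality, $\mcal{W}^*_{\max}\le\mcal{Z}^*_{\max\min}$.} Take any feasible $(X,\b z,Z)$ for (\ref{eq:lp_max}); then $X\in\mcal K^x_{cp}$. For every $Y\in\mcal K^y_{cp}$,
\[
\lip\mcal Q(X),Y\rip=\textstyle\sum_j z_j\lip B_j,Y\rip+\lip Z,Y\rip=\b c^\tp\b z+\lip Z,Y\rip\ge \b c^\tp\b z,
\]
because a COP matrix has nonnegative inner product with a CP matrix. Hence $\min_Y\lip\mcal Q(X),Y\rip\ge\b c^\tp\b z$, and taking suprema over feasible points gives the claim. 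If (\ref{eq:lp_max}) is infeasible, its value is $-\infty$ and the inequality is trivial.

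\emph{Middle inequality, $\mcal{Z}^*_{\max\min}\le\mcal{Z}^*_{\min\max}$.} This is the standard max-min inequality: for any $X'$ and $Y'$, $\inf_Y\lip\mcal Q(X'),Y\rip\le\lip\mcal Q(X'),Y'\rip\le\sup_X\lip\mcal Q(X),Y'\rip$.

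\emph{Last inequality, $\mcal{Z}^*_{\min\max}\le\mcal{W}^*_{\min}$.} The argument mirrors the first inequality. For feasible $(Y,\b w,W)$ of (\ref{eq:lp_min}) and any $X\in\mcal K^x_{cp}$,
\[
\lip\mcal Q(X),Y\rip=\lip X,\mcal Q^\star(Y)\rip=\b b^\tp\b w-\lip W,X\rip\le\b b^\tp\b w .
\]
Hence $\max_X\lip\mcal Q(X),Y\rip\le\b b^\tp\b w$ for every such $Y\in\mcal K^y_{cp}$, and taking the infimum gives the claim.

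There is no real obstacle. The only points needing care are the conventions: ``max'' and ``min'' should be read as $\sup$ and $\inf$, with the usual $\pm\infty$ conventions when a problem is infeasible or unbounded. The inner-product sign facts follow directly from the simplex-based definitions of COP and CP in Section~\ref{sc:pre}.
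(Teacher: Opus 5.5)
Your proof is correct and follows essentially the same route as the paper: Lagrangian duality for the pair (\ref{eq:lp_max})--(\ref{eq:lp_min}), weak duality of the inner CP/COP pair for the two outer inequalities, and the generic max--min inequality in the middle. The only difference is presentational: you make the weak-duality steps explicit through the identities $\lip \mcal{Q}(X),Y\rip = \b{c}^{\tp}\b{z} + \lip Z, Y\rip$ and $\lip \mcal{Q}(X),Y\rip = \b{b}^{\tp}\b{w} - \lip W, X\rip$, which the paper only invokes by name.
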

We remark that if Assumption~\ref{as:co}(a) presented below holds, then there is no duality gap between the CP program (\ref{eq:inner_min}) and COP program (\ref{eq:inner_min_dual}) for any $X$, likewise, under Assumption~\ref{as:co}(b) for the min-max counterpart. 
\begin{assumption}\label{as:co}
  \begin{enumerate}
    \item[(a)] For the symmetric matrices $B_1\ddd B_J$, there exist $z_1\ddd z_J$ such that $\sum\limits_{j=1}^Jz_jB_j\succ_{co}0$.
    \item[(b)] For the symmetric matrices $A_1\ddd A_I$, there exist $w_1\ddd w_I$ such that $\sum\limits_{i=1}^Iw_iA_i\succ_{co}0$.
  \end{enumerate}
\end{assumption}

\begin{lemma}\label{lem:z=w}
Under Assumption~\ref{as:co}(a), $\mcal{Z}^*_{\max\min} = \mcal{W}^*_{\max}$. Likewise, under Assumption~\ref{as:co}(b), $\mcal{Z}^*_{\min\max} = \mcal{W}^*_{\min}$. 
\end{lemma}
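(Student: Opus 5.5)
The plan is to show that, under Assumption~\ref{as:co}(a), strong duality holds between the inner CP program (\ref{eq:inner_min}) and its COP dual (\ref{eq:inner_min_dual}) for every fixed $X\in\mcal{K}^x_{cp}$. The equality $\mcal{Z}^*_{\max\min}=\mcal{W}^*_{\max}$ then follows by taking the supremum over $X$. Proposition~\ref{prop:maxmin_duality} already gives $\mcal{W}^*_{\max}\le \mcal{Z}^*_{\max\min}$, which is weak duality applied pointwise. So only the reverse inequality needs work, and for that it suffices to prove that for each feasible $X$ the value of (\ref{eq:inner_min}) equals the value of (\ref{eq:inner_min_dual}). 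Indeed, the optimal value of (\ref{eq:lp_max}) is $\sup_{X\in\mcal{K}^x_{cp}}$ of the dual value at $X$, while $\mcal{Z}^*_{\max\min}$ is $\sup_X$ of the primal value at $X$.

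For fixed $X$ I will apply conic strong duality with a Slater-type condition on the \emph{dual} side. The point $\bar z$ with $\sum_j \bar z_j B_j\succ_{co}0$ plays the role of a recession direction. Since the strictly copositive matrices form the interior of the COP cone, a scaling $-t\bar z$ with $t$ large gives
\[
\mcal{Q}(X)-\sum_j(-t\bar z_j)B_j=\mcal{Q}(X)+t\sum_j\bar z_jB_j,
\]
and this matrix is strictly copositive, i.e. it lies in the interior of the COP cone. (Strict copositivity on the compact simplex $\Delta^m$ gives a uniform positive margin that absorbs $\mcal{Q}(X)$.) Hence (\ref{eq:inner_min_dual}) is strictly feasible.

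Conic duality theorems give: if the dual problem is strictly feasible and bounded above, then there is no duality gap, and the primal (\ref{eq:inner_min}) attains its infimum. If the dual is unbounded, weak duality forces the primal to be infeasible, so both values are $+\infty$ in the min convention and the equality still holds. One can also see primal attainment directly. Assumption~(a) implies that the CP feasible set is bounded: for $Y\succcurlyeq_{cp}0$ we have $\lip \sum_j\bar z_jB_j, Y\rip=\bar{\b z}^{\tp}\b c$ fixed, and a strictly copositive matrix has inner product with a CP matrix bounded below by $\epsilon\,\mathrm{tr}$-type norms. Hence the feasible set is compact whenever it is nonempty.

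The min-max statement is symmetric. For fixed $Y$, the inner maximization over $\mcal{K}^x_{cp}$ is a CP program whose dual is $\min \b b^{\tp}\b w$ subject to $\sum_i w_iA_i-\mcal{Q}^\star(Y)\succcurlyeq_{co}0$. Assumption~\ref{as:co}(b) gives strict feasibility of this dual in the same way, which yields $\mcal{Z}^*_{\min\max}=\mcal{W}^*_{\min}$.

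The main obstacle is handling the extended-valued cases carefully, namely when the inner problem is infeasible for some $X$ or when $\mcal{K}^y_{cp}$ is empty, so that the sup/inf bookkeeping matches. The core Slater argument itself is routine.
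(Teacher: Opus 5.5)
Your proposal is correct and follows the paper's proof. Both arguments build a Slater point for the inner COP dual (\ref{eq:inner_min_dual}) by taking the strictly copositive combination $\sum_j \bar z_j B_j$ and scaling it so it absorbs $\mcal{Q}(X)$. Both then conclude strong duality for every fixed $X$, take the supremum over $X$, and treat the min-max case symmetrically. Your extra bookkeeping for infeasible or unbounded inner problems, and your observation that Assumption~\ref{as:co}(a) makes the CP feasible set compact, are correct refinements that the paper leaves implicit.
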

A simple sufficient condition ensuring condition in Assumption~\ref{as:co}(a) holds (and thus, strong duality) is that at least one $B_j$ for $j\in [J]$ is \emph{strictly copositive}. 
By analogous reasoning, if at least one $A_i$ for $i\in [I]$ is strictly copositive, then $\mcal{Z}^*_{\min\max} = \mcal{W}_{\min}^*$.

\subsection{Application: Zero-Sum Bi-Quadratic Games} \label{sec:game}

We illustrate an application of the bilinear optimization over CP cones to model a two-person zero-sum game with \emph{bi-quadratic} payoff functions and mixed-binary variables.
Let $\bs_1\in\re^{l_1}$ and $\bs_2\in \re^{l_2}$  be the strategy vectors of Players 1 and 2, respectively, and let $\mathbb{B}_1\subseteq [{l_1}]$, $\mathbb{B}_2\subseteq [{l_2}]$ be the index sets of binary variables. The feasible sets for $\bs_1$ and $\bs_2$ are given as $\mcal S_1$ and $\mcal S_2$ defined as follows.
\[\begin{aligned}
\mathcal S_1 : = & \{\bs_1 \in \re^{l_1} : G \bs_1 = \b{g},\ \bs_1\ge 0,\ \bs_{1,i}\in\{0,1\}\ \ \forall i\in \mathbb{B}_1 \},\\
\mathcal S_2 : = & \{\bs_2 \in \re^{l_2} : H \bs_2 = \b{h},\ \bs_2\ge 0,\ \bs_{2,j}\in\{0,1\}\ \forall j\in \mathbb{B}_2 \}.
\end{aligned}\]
We consider a zero-sum game that Player~1 aims to maximize, while Player~2 wants to minimize, the following bi-quadratic payoff function:
\begin{equation}\label{eqn:payof}
\begin{array}{rll}
P(\bs_1,\bs_2)\coloneqq \bs_1^\mathsf T C \bs_2+ \operatorname{vec}(\bs_1\bs_1^\mathsf T )^\mathsf T F \operatorname{vec}(\bs_2 \bs_2^\mathsf T).
\end{array}
\end{equation}
In the above, $C\in \re^{l_1\times l_2}$ and $F\in \re^{l_1^2\times l_2^2}$ are matrices that encode the first and the second order interactions, respectively, in the payoff function.

Typically, the game described in the above does not have a {\it pure strategy} solution, i.e., a pair of vectors $(\bs_1^*,\bs_2^*)\in \mcal{S}_1\times \mcal{S}_2$ such that the following holds for all :
\[ P(\bs_1,\bs_2^*)\le P(\bs_1^*,\bs_2^*) \le P(\bs_1^*,\bs_2),\quad \forall \bs_1\in \mcal{S}_1,\ \bs_2\in \mcal{S}_2.\]
Therefore, one usually consider {\it mixed strategies} for the variables $\bs_1$ and $\bs_2$. 
Let $\mathcal{M}_1$ (respectively, $\mathcal{M}_2$) be the set of probability measures supported on $\mathcal{S}_1$ (respectively, $\mathcal{S}_2$). 
A mixed strategy pair $(\mu_1, \mu_2) \in \mathcal{M}_1 \times \mathcal{M}_2$ induces the expected payoff 
\begin{equation*}\label{eqn:expect_payoff}
\begin{aligned}
U(\mu_1,\mu_2) \,:=\ &  \bE_{(\mu_1,\mu_2)}\left[ \bs_1^\mathsf T C \bs_2+ \operatorname{vec}(\bs_1\bs_1^\mathsf T )^\mathsf T F \operatorname{vec}(\bs_2 \bs_2^\mathsf T)
  \right]\\
 =\ & \bE_{\mu_1}[\bs_1]^\mathsf T C \bE_{\mu_2}[\bs_2]+ \operatorname{vec}(\bE_{\mu_1}[\bs_1\bs_1^\mathsf T] )^\mathsf T F  \operatorname{vec}(\bE_{\mu_2}[\bs_2\bs_2^\mathsf T]).
\end{aligned}
\end{equation*}
In this sequel, we consider the \emph{mixed-strategy bi-quadratic game} that aims to find a pair of probability measures $(\mu_1^*,\mu_2^*)\in \mcal{M}_1\times \mcal{M}_2$ such that 
\begin{equation} \label{eq:mix_bigame}
U(\mu_1,\mu_2^*) \le U(\mu_1^*,\mu_2^*) \le U(\mu_1^*,\mu_2),\quad \mbox{for all}\quad \mu_1\in \mcal{M}_1,\ \mu_2\in \mcal{M}_2.  \end{equation}
Such a pair $(\mu_1^*,\mu_2^*)$ is called a (mixed-strategy) {\it Nash equilibrium}. 

\subsubsection{Motivation: Finite Games with Complementary Actions}\label{sec:gameexmp} 
In this motivation example, we illustrate that the zero-sum bi-quadratic game formulation arises when we consider a finite two-player zero-sum game where the payoff depends on \emph{complementary} actions, that is, the payoff is determined based on pairs of actions chosen by each player. 
For such kinds of games, the strategy space is defined as containing individual actions, and the complementarity is explicitly captured in the payoff expression given as in (\ref{eqn:payof}).
We will show that the resulting bi-quadratic game admits an equivalent reformulation as a bilinear problem over CP cones, and the methods developed in this paper provide a clear path to its solution. In doing so, we offer a framework for tackling certain large-scale games with complementary effects for which no efficient algorithms may not be readily available. The following example, a cyclic Blotto game,  provides a relevant context for this type of game.

{\bf{Example: Cyclic Blotto Game.}} The Colonel Blotto game is a two-person-zero-sum game originated from the pioneering work of \cite{borel1921theorie}. The game considers two players, a defender and an attacker, simultaneously allocating limited resources across a number of locations. At each location, the player allocating more resources wins the location, and the payoff of each player depends on the total number of locations he wins. 
{The {\it cyclic Blotto game} is a variation of the classic Colonel Blotto game, defined as follows. Suppose the defender has $R_1$ resources and the attacker has $R_2$ resources.} 
They simultaneously allocate their resources to $N$ locations arranged in a circle. 
On each location, if the defender has more resources than the attacker, then they win that location and gain a payoff of 1; and if they have the same number of resources, then they both accrue 0 payoff. 
Additionally, the defender has an advantage: if his resources on location $i$ are fewer than the attacker's, they can use surplus resources from the adjacent location on the left, $i-1$ (by abuse of notation, let $i-1 \coloneqq N$ if $i = 1$), to support location $i$. 
If the combined resources exceed the attacker's, the defender still wins the location and gains a payoff of 1. Otherwise, the attacker wins, and the defender's payoff is $-1$. 
In the case where both players have equal resources on a location after considering the defender's support, the result is a tie, and the defender receives a payoff of 0. 
It is clear that this forms a zero-sum game, such that defender aims to maximize their payoff, and the attacker aims to minimize it. 
Figure 1 provides an example of a cyclic Blotto game with 5 locations, and each player has 5 units of resources. Suppose the defender’s and attacker’s resource allocations are as shown in the figure, where numbers next to each node on the inner side of the ring indicate the attacker’s resource allocation, whereas numbers on the outer side indicate the defender’s resource allocation. The defender earns a payoff of $1$ at locations~1 and~3, where its resources exceed the attacker’s. At location~2, although the defender has no local resources, effective support from location~1 allows it to earn a payoff of $1$. At location~4, support from location~3 only equalizes total resources, yielding zero payoff for both players. At location~5, support from location~4 is insufficient, and the defender loses the location, receiving a payoff of $-1$.
  
\begin{figure}[htbp]
\centering
\begin{tikzpicture}[
 scale=0.4,
  transform shape,
  >=Latex,
  node distance=2cm,
  every node/.style={font=\large},
  state/.style={circle, draw=black, very thick, minimum size=12mm, inner sep=0pt},
  lab/.style={font=\large}
]

% --- nodes on a circle ---
\node[state] (1) at (90:3)  {1};
\node[state] (2) at (18:3)  {2};
\node[state] (3) at (-54:3) {3};
\node[state] (4) at (-126:3){4};
\node[state] (5) at (162:3) {5};

% --- directed cycle (clockwise) ---
\draw[->, very thick] (5) to[bend left=18] (1);
\draw[->, very thick] (1) to[bend left=18] (2);
\draw[->, very thick] (2) to[bend left=18] (3);
\draw[->, very thick] (3) to[bend left=18] (4);
\draw[->, very thick] (4) to[bend left=18] (5);

% --- small labels around nodes ---
% node 1: top=2, bottom=0
\node[lab, above=2mm of 1] {2};
\node[lab, below=2mm of 1] {0};

% node 2: left=1, right=0
\node[lab, left=2mm of 2]  {1};
\node[lab, right=2mm of 2] {0};

% node 3: left=0, below=2
\node[lab, left=2mm of 3]  {0};
\node[lab, right=2mm of 3] {2};

% node 4: top=2, bottom=0
\node[lab, right=2mm of 4] {2};
\node[lab, left=2mm of 4] {0};

% node 5: left=1, right=2
\node[lab, left=2mm of 5]  {1};
\node[lab, right=2mm of 5] {2};

\end{tikzpicture}

\caption{Cyclic allocation graph with $N=5$.}
\label{fig:cyclic_example}
\end{figure}
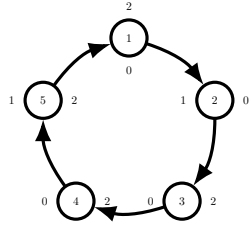

It is well‑known that the classic Colonel Blotto game is computationally challenging due to the exponentially large number of pure strategies when they are defined as allocation profiles across $N$ locations. 
The cyclic variant inherits this curse of dimensionality and adds a further complication: 
because resources can be redeployed around the ring, 
adjacent allocations represent the complementary effect.  
This complementary effect creates correlations among allocations on each location, which makes the currently proposed solution methodologies for the classic Colonel Blotto game relying on separability \cite[e.g.,][]{behnezhad2023fast} not directly applicable to this cyclic extension.

In the following, we present a biquadratic game formulation of the cyclic game using binary variables.
For every $k = 0,1\ddd E_1$ and each $i = 1\ddd N$,
let $s_{1,k,i}$ be the indicator variable that takes the value $1$ if the defender deploys $k$ resources at location $i$, and it equals zero otherwise.
Similarly, we define $s_{2,l,j}$ for the attacker's allocation of $l$ resources at location $j$, for each $0\le l\le E_2$ and $j = 1\ddd N$. 
Denote by 
\[ \b{s}_1\coloneqq (s_{1,k,i})_{k = 0,1\ddd E_1,\, i = 1\ddd N}, \quad 
 \b{s}_2\coloneqq (s_{2,l,j})_{l = 0,1\ddd E_2,\, j = 1\ddd N}.\]
For feasible allocations $\b{s}_1$ and $\b{s}_2$, 
the sum of all allocations cannot exceed the total number of resources.
Moreover, for each location $i$, $\sum\limits_{k=0}^{E_1}s_{1,k,i} = \sum\limits_{l=0}^{E_2}s_{2,l,i} = 1$ since allocation strategies at one single location are mutually exclusive. 
Thus, the feasible pure strategies for the defender and the attacker are given by the following two polyhedra, respectively: 
\[
\mcal{S}_1\coloneqq \left\{ \b{s}_1 \in \{0,1\}^{N(E_1+1)}
\left|\ 
\begin{array}{lll}
\sum\limits_{i=1}^N\sum\limits_{k=0}^{E_1}ks_{1,k,i} \le E_1,\ 
\sum\limits_{k=0}^{E_1}s_{1,k,1} = \cdots = \sum\limits_{k=0}^{E_1}s_{1,k,N} = 1

\end{array}
\right.\right\},
\]
\[
\mcal{S}_2\coloneqq \left\{ \b{s}_2 \in \{0,1\}^{N(E_2+1)}
\left|\ 
\begin{array}{lll}
\sum\limits_{j=1}^N\sum\limits_{l=0}^{E_2}ls_{2,l,j} \le E_2,\ 
\sum\limits_{l=0}^{E_2}s_{2,l,1} = \cdots = \sum\limits_{l=0}^{E_2}s_{2,l,N} = 1
% x_{ki} \in \{0,1\}, &\forall i \in [N], &\forall k \in [A]_0
\end{array}
\right.\right\}.
\]
For the given feasible allocations $\b{s}_1\in\mcal{S}_1$ and $\b{s}_2\in\mcal{S}_2$, if the defender allocates more resources than the attacker at location $i$,
then the defender win location $i$ directly and payoff equals $1$, which can be reflected by
\[ P_i^{direct}(\b{s}_1,\b{s}_2) \coloneqq \sum\limits_{ k > l } s_{1,k,i}s_{2,l,i}. \]
Otherwise, player 1 can seek support from location $i-1$, and the payoff in these cases becomes
\[ \begin{array}{c}
P_i^{support}(\b{s}_1,\b{s}_2)  \coloneqq \sum\limits_{k < l, k + {k'} > l+ l'  } s_{1,k,i}s_{1,k',(i-1)}s_{2,l,i}s_{2,l',(i-1)}\\
\qquad\qquad\qquad\qquad\qquad\qquad\qquad  - \sum\limits_{k < l, k + {k'} < l+ l'  } s_{1,k,i}s_{1,k',(i-1)}s_{2,l,i}s_{2,l',(i-1)}. 
\end{array}\]
Then, it is clear that the defender's total payoff is
\begin{equation*}\label{eqn:payoffblotto} P(\b{s}_1,\b{s}_2)  = \sum_{i=1}^N \big(P_i^{direct}(\b{s}_1,\b{s}_2) + P_i^{support}(\b{s}_1,\b{s}_2)\big).\end{equation*}
The payoff function contains not only bilinear terms (given by $P_i^{direct}$), 
but also bi-quadratic terms resulting from the complementary actions taken at two adjacent locations (i.e., $P_i^{support}$).
Concrete examples with numerical results for cyclic Blotto games are presented in Section~\ref{sec:numerical}.

\subsubsection{{Max-Min Bilinear CPP Reformulations}}
\label{sc:maxmin_game_reform}
Consider the bi-quadratic game defined in this section. Let $\mcal L_1$ and $\mcal L_2$ denotes the polyhedra $\{\bs_1 \in \re^{l_1}_+ : G \bs_1 = \b{g}\}$ and $\{\bs_2 \in \re^{l_2}_+ : H \bs_2 = \b{h}\}$, respectively. 
We first introduce the following blanket assumption, which we maintain for the rest of this sequel:
\begin{assumption}\label{as:0to1}
If $u\in \mcal L_1$ and $v\in \mcal L_2$, then $0\le u_i\le 1$, $0\le v_j\le 1$ for all $i\in \mathbb{B}_1$ and $j\in \mathbb{B}_2$.
\end{assumption}
We remark that Assumption~\ref{as:0to1} can be enforced without loss of generality by adding constraints $u_i \le 1$ and $v_j \le 1$, for all $i\in \mathbb{B}_1$ and $j\in \mathbb{B}_2$ in $\mcal L_1$ and $\mcal L_2$, respectively (slack variables are added to convert these constraints to equalities); see \cite{burer2009copositive}.
This framework naturally includes cases where $\bs_1$ and $\bs_2$ are binary. 
The linear constraints in $\mathcal{S}_1$ and $\mathcal{S}_2$ are general enough to incorporate side constraints on the choices of strategies.

Consider now the following \emph{pure strategy} bilinear game over the CP cones, seeking pairs $((\b{p}, P),(\b{q}, Q)) \in \mathcal C_{cp}^1 \times \mathcal C_{cp}^2$, where $\mathcal C_{cp}^1$ and $\mathcal C_{cp}^2$ are defined as below
\begin{equation}\label{eqn:cpconex_qg}
\mathcal C_{cp}^1  = \left\{\begin{array}{cc} (\b{p}, P) \in \re^{l_1} \times \mcal{S}^{l_1} & \left| \begin{array}{cc}
G\b{p} = \b{g},\\
diag(G P {G}^\mathsf T) =  \b{g} \bullet \b{g},\\
{p}_{1,i}- P_{i,i} = 0,\ \forall i\in \mathbb{B}_1,\\
\left[\begin{array}{cc} 1 & {\b{p}}^\mathsf T\\ \b{p} & P\end{array}\right] \succcurlyeq_{cp} 0
\end{array}\right.
\end{array}\right\},
\end{equation}
\begin{equation}\label{eqn:cpconey_qg}
\mathcal C_{cp}^2 = \left\{\begin{array}{cc} (\b{q}, Q) \in \re^{l_2} \times \mcal{S}^{l_2} & \left| \begin{array}{cc}
H\b{q} = \b{h},\\
diag(H Q {H}^\mathsf T) =  \b{h} \bullet \b{h},\\
{q}_{2,j}- Q_{j,j} = 0,\ \forall j\in \mathbb{B}_2,\\
\left[\begin{array}{cc} 1 & {\b{q}}^\mathsf T\\ \b{q} & Q \end{array}\right] \succcurlyeq_{cp} 0
 \end{array}\right.
\end{array}\right\}.
\end{equation}
Let $\mcal{F}$ be the linear operator such that 
$\lip \mcal{F} (P), Q \rip = \lip  P, \mcal{F}^*(Q) \rip = \operatorname{vec}(P)^\mathsf T F \operatorname{vec}(Q)$ for all $P$ and $Q$.
Define the bilinear payoff function 
\[ V((\b{p}, P),(\b{q}, Q)) \,:=\, \b{p}^{\tp} C \b{q} + \lip \mcal{F} (P), Q \rip = \b{p}^{\tp} C \b{q} + \lip P, \mcal{F}^{\star}(Q) \rip. \]
 We say $((\b{p}^*, P^*),(\b{q}^*,  Q^*))$ is a Nash equilibrium of this bilinear CP game if
\begin{equation}\label{eq:cpgame}
V((\b{p}, P),(\b{q}^*,  Q^*)) \le V((\b{p}^*, P^*),(\b{q}^*,  Q^*)) \le V((\b{p}^*, P^*),(\b{q},  Q)) 
\end{equation}
for all $(\b{p}, P)\in \mathcal C_{cp}^1$ and for all $(\b{q},  Q) \in \mathcal C_{cp}^2$.
The following theorem shows that this bilinear game is equivalent to the bi-quadratic game defined by (\ref{eq:mix_bigame}).

\begin{theorem}\label{tm:bqg=cpg}
For the bi-quadratic game (\ref{eq:mix_bigame}), a pair of mixed strategy $(\mu_1^*,\mu_2^*)\in\mcal{M}_1\times \mcal{M}_2$ is a Nash equilibrium if and only if there exists $((\b{p}^*, P^*),(\b{q}^*,  Q^*)) \in \mathcal C_{cp}^1 \times \mathcal C_{cp}^2$ such that  $((\b{p}^*, P^*),(\b{q}^*,  Q^*))$ is a Nash equilibrium of (\ref{eq:cpgame}). Moreover, for the Nash equilibrium of the two games, we can establish that $\bE_{\mu_1^*}[\bs_1] = \b{p}^*$, $\bE_{\mu^*_2}[\bs_2] = \b{q}^*$.
\end{theorem}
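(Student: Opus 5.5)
The plan is to pass between the two games through the moment map
\[
\Phi_1:\mu_1\mapsto \bigl(\bE_{\mu_1}[\bs_1],\,\bE_{\mu_1}[\bs_1\bs_1^\tp]\bigr),\qquad
\Phi_2:\mu_2\mapsto \bigl(\bE_{\mu_2}[\bs_2],\,\bE_{\mu_2}[\bs_2\bs_2^\tp]\bigr).
\]
The key fact is Burer's completely positive representation of the convex hull of mixed-binary feasible sets \cite{burer2009copositive}. Under Assumption~\ref{as:0to1} it gives
\[
\mathcal C_{cp}^1=\Phi_1(\mcal M_1)=\operatorname{conv}\bigl\{(\bs_1,\bs_1\bs_1^\tp):\bs_1\in\mcal S_1\bigr\},
\]
and likewise $\mathcal C_{cp}^2=\Phi_2(\mcal M_2)$. (If $\mcal S_i$ is unbounded, one takes the closure of the convex hull, or assumes boundedness as in the Blotto application.)

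I would establish this equality in two inclusions. For $\Phi_1(\mcal M_1)\subseteq\mathcal C_{cp}^1$, any $\bs_1\in\mcal S_1$ satisfies $G\bs_1=\b g$, $(G\bs_1)(G\bs_1)^\tp=\b g\b g^\tp$ (hence the diagonal condition), $s_{1,i}=s_{1,i}^2$ for $i\in\mathbb B_1$, and $(1,\bs_1)(1,\bs_1)^\tp\succcurlyeq_{cp}0$. All these conditions are linear, or define convex cones, so they persist under taking expectations. The reverse inclusion is the substantive part of Burer's theorem. Given a CP factorization $\sum_k \lambda_k(\zeta_k,\b v_k)(\zeta_k,\b v_k)^\tp$, the condition $\operatorname{diag}(GPG^\tp)=\b g\bullet\b g$ together with $G\b p=\b g$ forces $G\b v_k=\zeta_k\b g$. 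The terms with $\zeta_k=0$ are recession directions, which vanish when $\mcal L_1$ is bounded. Assumption~\ref{as:0to1} combined with $p_i=P_{ii}$ then forces the binary coordinates of $\b v_k/\zeta_k$ to lie in $\{0,1\}$, which yields a finitely supported $\mu_1$. I would cite this rather than reprove it; it is the main obstacle, especially the handling of the $\zeta_k=0$ terms.

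Next I check that payoffs agree under the moment map. By the computation already displayed for $U$, independence of $\mu_1$ and $\mu_2$ gives
\[
U(\mu_1,\mu_2)=V\bigl(\Phi_1(\mu_1),\Phi_2(\mu_2)\bigr).
\]
With this identity both directions of the equivalence follow.

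\emph{($\Rightarrow$)} Given an equilibrium $(\mu_1^*,\mu_2^*)$, set $(\b p^*,P^*)=\Phi_1(\mu_1^*)$ and $(\b q^*,Q^*)=\Phi_2(\mu_2^*)$. For any $(\b p,P)\in\mathcal C_{cp}^1$ choose $\mu_1$ with $\Phi_1(\mu_1)=(\b p,P)$, possible by surjectivity. Then
\[
V\bigl((\b p,P),(\b q^*,Q^*)\bigr)=U(\mu_1,\mu_2^*)\le U(\mu_1^*,\mu_2^*)=V^*,
\]
and the other inequality in (\ref{eq:cpgame}) is symmetric.

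\emph{($\Leftarrow$)} Given a CP equilibrium, pick preimages $\mu_i^*$. For any $\mu_1$, the point $\Phi_1(\mu_1)$ lies in $\mathcal C_{cp}^1$, so
\[
U(\mu_1,\mu_2^*)=V\bigl(\Phi_1(\mu_1),(\b q^*,Q^*)\bigr)\le V^*=U(\mu_1^*,\mu_2^*),
\]
and the second inequality is again symmetric.

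The moment identities $\bE_{\mu_1^*}[\bs_1]=\b p^*$ and $\bE_{\mu_2^*}[\bs_2]=\b q^*$ hold by construction in both directions.
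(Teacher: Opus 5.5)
Your argument is correct when $\mcal L_1,\mcal L_2$ are bounded (equivalently, $\mcal S_1,\mcal S_2$ compact). It shares the paper's skeleton: the two moment maps, Burer's lemmas to turn a CP factorization into a finitely supported measure, and the identity $U(\mu_1,\mu_2)=V(\Phi_1(\mu_1),\Phi_2(\mu_2))$. Where you diverge is the recession terms.

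The paper never claims $\Phi_1$ is onto $\mathcal C_{cp}^1$. It splits $P=P'+P^{\circ}$ with $P^{\circ}=\sum_{k\le k^{\circ}}\b\zeta_k\b\zeta_k^{\tp}$ and $G\b\zeta_k=0$, and builds $\mu_1^{(\b p,P)}$ from $P'$ alone. It then uses Lemma~\ref{lm:removeX0} (comparing $P'+tP^{\circ}$ for $t=0,1,2$) to discard $P^{\circ}$ at an equilibrium, so as to avoid any boundedness hypothesis. You instead use $P^{\circ}=Q^{\circ}=0$, which is exactly what boundedness gives; the paper shows this in the proof of Lemma~\ref{lem:gameNE}. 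Surjectivity then makes the transfer of equilibria immediate in both directions, which is cleaner.

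Your parenthetical fallback for unbounded $\mcal S_i$ does not work, and boundedness should be an explicit hypothesis. If $\mathcal C_{cp}^i$ is only the closure of $\Phi_i(\mcal M_i)$, the forward direction survives. The measures $(1-\epsilon)\mu+\epsilon\,\delta_{\bs+\b\zeta/\sqrt{\epsilon}}$ have means tending to $\bE_{\mu}[\bs]$ and second moments tending to $\bE_{\mu}[\bs\bs^{\tp}]+\b\zeta\b\zeta^{\tp}$, and $V$ is linear in each argument. But ``pick preimages'' fails in the reverse direction, and nothing can repair it.

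Here is a counterexample. Take $\mcal S_1=\{(a,1-a):0\le a\le 1\}$ and $\mcal S_2=\{(1,y):y\ge 0\}$, i.e. $G=[1\ 1]$, $H=[1\ 0]$, $\b g=\b h=1$, with no binary variables. Use the payoff $a+y-ay^2$. It has the form (\ref{eqn:payof}) because on these sets $a=s_{1,1}s_{2,1}$, $y=(s_{1,1}+s_{1,2})s_{2,2}$ and $ay^2=(s_{1,1}^2+s_{1,1}s_{1,2})s_{2,2}^2$.

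Then $V=p_1+q_2-(P_{11}+P_{12})Q_{22}$, and $P_{11}+P_{12}=p_1$ on $\mathcal C_{cp}^1$. The point $\b p^*=(0,1)$, $P^*=\mathrm{diag}(0,1)$, $\b q^*=(1,0)$, $Q^*=\mathrm{diag}(1,1)$ is an equilibrium of (\ref{eq:cpgame}) with value $0$. Every deviation of player~1 yields $p_1(1-Q^*_{22})=0$, and every deviation of player~2 yields $q_2\ge 0$.

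Yet the bi-quadratic game has no equilibrium. Player~2 has a best reply only if $\bE_{\mu_1}[a]=0$. That forces $\mu_2=\delta_{(1,0)}$, and against it $a=1$ is a profitable deviation. So the reverse implication is false without boundedness.

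The paper's proof fails at the same spot. Step~3 asserts $U(\hat\mu_1,\mu_2^{(\b q^*,Q^*)})=V((\hat{\b p},\hat P),(\b q^*,Q^*))$, but the left side actually equals $V((\hat{\b p},\hat P),(\b q^*,{Q^*}'))$. The difference $\langle\mcal F(\hat P),{Q^*}^{\circ}\rangle$ is killed by Lemma~\ref{lm:removeX0} only for $\hat P=P^*$; in the example it equals $-1$ at $a=1$.
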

For the bilinear game (\ref{eq:cpgame}), the minimum payoff for Player 1 is 
\be\label{eq:cpmaximin} \mcal V_1^* \,:= \, \max_{(\b{p},P)\in \mcal{C}_{cp}^1}\min_{(\b{q},Q)\in \mcal{C}_{cp}^2} V( (\b{p},P), (\b{q},Q) ), \ee
and the maximum cost for Player 2 is
\be\label{eq:cpminimax} \mcal V_2^* \,:= \, \min_{(\b{q},Q)\in \mcal{C}_{cp}^2}\max_{(\b{p},P)\in \mcal{C}_{cp}^1} V( (\b{p},P), (\b{q},Q) ). \ee
In general, it can be difficult to solve for Nash equilibria in either the bi-quadratic game \eqref{eq:mix_bigame} or the bilinear CP game \eqref{eq:cpgame} directly. Instead, one often studies these paired max–min and min–max problems.
By leveraging results in Sections~\ref{sec:conic}, one can formulate COP-CP programs for $\mcal V_1^*$ and $\mcal V_2^*$. For instance, $\mcal V_1^*$ cam be cast in a form akin to 
\be\label{eq:cpmaxmax}
\begin{array}{rcl}
\mcal U_1^{cop-cp} \,:= \, & \max  & \quad \rho + {\b{h}}^\mathsf T \b{r} + (\b{h}\bullet \b{h})^\mathsf T \b{v} \\
& \st & \quad {(\b{p},P)\in \mcal{C}_{cp}^1} \\
&     & \quad (\rho, \b{r},\b{v},\b{\vartheta})\in \mcal{C}_{cop}^2(\b{p},P).
\end{array}\ee
A symmetric construction applies to $\mcal V_2^*$. 
\be\label{eq:cpminmin} 
\begin{array}{rcl}
\mcal U_2^{cop-cp} \,:= \, & \min  & \quad \tau + {\b{g}}^\mathsf T \b{t} + (\b{g}\bullet \b{g})^\mathsf T \b{u}\\
& \st & \quad {(\b{q},Q)\in \mcal{C}_{cp}^2} \\
&     & \quad (\tau, \b{t},\b{u},\b{\kappa})\in \mcal{C}_{cop}^1(\b{q},Q),
\end{array}\ee
where $\mcal{C}_{cp}^1$, $C_{cp}^2$, are defined in (\ref{eqn:cpconex_qg}), (\ref{eqn:cpconey_qg}), respectively, and (let $\mathscr{R}_i\coloneqq \re \times \re^{d_i} \times \re^{d_i} \times \re^{l_i}$)
\[ \label{eqn:copconex}
\mcal{C}_{cop}^1 (\b{q},Q) \, := \, \left\{  \begin{array}{c}(\tau, \b{t},\b{u},\b{\kappa}) \\ \in \mathscr{R}_1
\end{array} \left| 
\begin{gathered}
\begin{bmatrix} -\tau & \frac{\b{q}^{\tp}C^{\tp} - ({G}^{\tp} \b{t} + \b{\kappa})^{\tp}}{2} \\ \\ \frac{C\b{q} - (G^{\tp} \b{t} + \b{\kappa})}{2} & \mcal{F}^{\star}(Q) - G^\mathsf{T} \Lambda(\b{u}) G + \Lambda(\b{\kappa}) \end{bmatrix}
\succcurlyeq_{co}0, \\ 
{\kappa}_i = 0,\quad \forall\ i\notin \mathbb{B}_1.
\end{gathered}
\right.
\right\},
\] 
\[ \label{eqn:copconey}
\mcal{C}_{cop}^2 (\b{p},P) \, := \, \left\{  \begin{array}{c}(\rho, \b{r},\b{v},\b{\vartheta}) \\ \in \mathscr{R}_2 \end{array} \left| 
\begin{gathered}
\begin{bmatrix} \rho & \frac{({H}^{\tp} \b{r} + \b{\vartheta})^{\tp} - \b{p}^{\tp}C}{2} \\ \\ \frac{{H}^{\tp} \b{r} + \b{\vartheta} - C^{\tp}\b{p}}{2} & {H}^\mathsf{T} \Lambda(\b{v}) H - \Lambda(\b{\vartheta}) - \mcal{F}(P) \end{bmatrix}
\succcurlyeq_{co}0, \\ 
{\vartheta}_i = 0,\quad \forall\ i\notin \mathbb{B}_2.
\end{gathered}
\right.
\right\}.
\] 
\begin{proposition}\label{prop:gamecpco}
The maximization problem (\ref{eq:cpmaxmax}) and the minimization problem (\ref{eq:cpminmin}) are dual to each other. Moreover, 
    \begin{enumerate}
    \item If ${H}^\mathsf T H\succ_{co}0$, then $\mcal{V}_1^*=\mcal U_1^{cop-cp}$.
    \item If ${G}^\mathsf T G\succ_{co}0$, then $\mcal{V}_2^*=\mcal U_2^{cop-cp}$.
    \end{enumerate}
\end{proposition}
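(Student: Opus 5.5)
The plan is to cast the bilinear CP game (\ref{eq:cpmaximin})--(\ref{eq:cpminimax}) as an instance of the general problem (\ref{eqn:conic}), and then read Proposition~\ref{prop:gamecpco} off Proposition~\ref{prop:maxmin_duality} and Lemma~\ref{lem:z=w}.

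\textbf{Step 1: embedding.} Set
\[
X=\begin{bmatrix}1&\b{p}^{\tp}\\ \b{p}&P\end{bmatrix}, \qquad
Y=\begin{bmatrix}1&\b{q}^{\tp}\\ \b{q}&Q\end{bmatrix}.
\]
Each constraint defining $\mcal{C}^1_{cp}$ is then a linear equation $\lip A_i,X\rip=b_i$:
\begin{itemize}
\item the normalization $X_{00}=1$;
\item the rows of $G\b{p}=\b{g}$, written with symmetrized border matrices;
\item the entries of $\mathrm{diag}(GPG^{\tp})=\b{g}\bullet\b{g}$, i.e.\ $\lip G_k^{\tp}G_k,P\rip=g_k^2$;
\item the binary links $p_i-P_{ii}=0$.
\end{itemize}
The same holds for $\mcal{C}^2_{cp}$ and $Y$. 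The payoff $V=\b{p}^{\tp}C\b{q}+\lip \mcal{F}(P),Q\rip$ equals $\lip \mcal{Q}(X),Y\rip$ for the linear map
\[
\mcal{Q}(X)=\begin{bmatrix}0&\tfrac12\b{p}^{\tp}C\\ \tfrac12 C^{\tp}\b{p}&\mcal{F}(P)\end{bmatrix},
\]
and its adjoint $\mcal{Q}^\star(Y)$ has the analogous form with $C\b{q}$ and $\mcal{F}^\star(Q)$. Hence $\mcal V_1^*=\mcal{Z}^*_{\max\min}$ and $\mcal V_2^*=\mcal{Z}^*_{\min\max}$ for this data.

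\textbf{Step 2: matching the COP constraints.} Attach multipliers to the $Y$-side constraints: $\rho$ for the normalization, $\b{r}$ for $H\b{q}=\b{h}$, $\b{v}$ for the diagonal constraints, and $\vartheta_j$ for $j\in\mathbb{B}_2$. I then write out the COP constraint $\mcal{Q}(X)-\sum_j z_jB_j\succcurlyeq_{co}0$ of (\ref{eq:lp_max}) and check that it is exactly the matrix in $\mcal{C}^2_{cop}(\b{p},P)$.

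This is where signs need care. The dual objective $\b{c}^{\tp}\b{z}$ becomes $\rho+\b{h}^{\tp}\b{r}+(\b{h}\bullet\b{h})^{\tp}\b{v}$, and the corner entry $-\rho$ arising from $\mcal{Q}(X)-\sum_j z_jB_j$ has to match the displayed $\rho$. I expect a reparametrization (a sign flip of the multipliers, or reading the paper's block as the negated/consistent convention) to reconcile them, and I would fix this by direct computation. Doing the same on the $X$-side turns (\ref{eq:lp_min}) into (\ref{eq:cpminmin}). Proposition~\ref{prop:maxmin_duality} then gives that the two programs are mutually dual.

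\textbf{Step 3: strong duality.} This is the main obstacle, because Assumption~\ref{as:co}(a) must be verified for the lifted matrices $B_j$ of size $(l_2+1)\times(l_2+1)$, not for $H$ itself. I would take $z$ to be $1$ on the normalization constraint and a scaled weight $t>0$ on each diagonal constraint. This produces
\[
\begin{bmatrix}1&0\\0&tH^{\tp}H\end{bmatrix}.
\]
To see this is strictly copositive, take $(x_0,\b{x})\ge0$ nonzero:
\begin{itemize}
\item if $\b{x}\neq0$, strict copositivity of $H^{\tp}H$ gives $t\,\b{x}^{\tp}H^{\tp}H\b{x}>0$;
\item otherwise $x_0>0$ and the form equals $x_0^2>0$.
\end{itemize}
If cross terms enter through the border constraints, I would absorb them using the freedom in the multipliers of $H\b{q}=\b{h}$, either by setting them to zero or by taking $t$ large. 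Lemma~\ref{lem:z=w} then yields $\mcal V_1^*=\mcal U_1^{cop-cp}$. Part 2 follows symmetrically from Assumption~\ref{as:co}(b) using $G^{\tp}G\succ_{co}0$.
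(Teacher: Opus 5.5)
Your argument is correct and is essentially the paper's proof. The paper also gets strong duality for the inner problem from a Slater point of its copositive dual, built only from the normalization multiplier and a large multiple of the diagonal multipliers (which contribute $H^{\tp}H$); it checks strict copositivity of the full dual matrix with a copositive Schur-complement bound, whereas you verify Assumption~\ref{as:co}(a) for the lifted data via $\mathrm{diag}(1,tH^{\tp}H)$ and let Lemma~\ref{lem:z=w} absorb $\mcal{Q}(X)$.

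On your Step~2 worry: no reparametrization of $(\rho,\b{r},\b{v},\b{\vartheta})$ reconciles the signs, because the displayed block in $\mcal{C}^2_{cop}(\b{p},P)$ is exactly $-\big(\mcal{Q}(X)-\sum_j z_jB_j\big)$. Read literally, (\ref{eq:cpmaxmax}) is then unbounded under $H^{\tp}H\succ_{co}0$, and the block in $\mcal{C}^1_{cop}$ is negated in the same way. So the statement must be read with these blocks negated, which is your second option. Your Slater point is then $\rho=-K$, $\b{r}=0$, $\b{v}=-Kt\,\b{1}$, $\b{\vartheta}=0$, i.e.\ the paper's ``$\rho,\kappa$ large'' point with the sign corrected.
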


The conditions in Proposition \ref{prop:gamecpco} are aligned with Assumption \ref{as:co}. When the feasible sets $\mcal{S}_1$ and $\mcal{S}_2$ are compact, the classical results of \cite{dresher1950polynomial}, \cite{glicksberg1952further}, and \cite{rosen1965existence} guarantee the existence of a Nash equilibrium for the bi-quadratic game \eqref{eq:mix_bigame}. By Theorem~\ref{tm:bqg=cpg}, the corresponding pure-strategy bilinear CP game \eqref{eq:cpgame} also admits a Nash equilibrium, implying $\mcal V_1^* = \mcal V_2^*$. This further implies that the compactness of $\mcal{S}_1$ and $\mcal{S}_2$, together with conditions in Proposition \ref{prop:gamecpco}, i.e., ${G}^\mathsf T G\succ_{co}0$ ${H}^\mathsf T H\succ_{co}0$, ensure strong duality between the COP-CP programs (\ref{eq:cpmaxmax}) and (\ref{eq:cpminmin}), i.e., $\mcal U_1^{cop-cp} = \mcal U_2^{cop-cp}$. In fact, it can be shown that the compactness of  $\mcal{S}_1$ and $\mcal{S}_2$ implies ${G}^\mathsf T G\succ_{co}0$ ${H}^\mathsf T H\succ_{co}0$ (which we will prove in the proof of the following Lemma \ref{lem:gameNE}).  Therefore, the compactness of the feasible sets $\mcal{S}_1$ and $\mcal{S}_2$ is sufficient to ensure strong duality between the paired COP-CP formulations, and both games have well-defined equilibrium solutions. Lemma \ref{lem:gameNE} below presents this result and it further shows that the COP-CP game solution recovers the first two moments of the mixed strategy equilibrium of the biquadratic game.

\begin{lemma}\label{lem:gameNE}
    Let $(\mu_1^*,\mu_2^*)\in\mcal{M}_1\times \mcal{M}_2$ be a Nash equilibrium of the bi-quadratic game, and $(\b{p}^*, P^*,\rho, \b{r},\b{v},\b{\vartheta})$, and $(\b{q}^*,  Q^* , \tau, \b{t},\b{u},\b{\kappa})$ are optimal solutions to Problems (\ref{eq:cpmaxmax}) and (\ref{eq:cpminmin}). If $\mcal{S}_1$ and $\mcal{S}_2$ are compact, then
    \begin{enumerate}
   \item  $\mcal{V}_1^*= \mcal U_1^{cop-cp} = \mcal U_2^{cop-cp} = \mcal{V}_2^*$; and
   \item $\bE_{\mu_1^*}[\b{s}_1] = \b{p}^*$, $\bE_{\mu_2^*}[\b{s}_2] = \b{q}^*$; and
   \item $\bE_{\mu_1^*}[\b{s}_1\b{s}_1^{\tp}] = P^*$, $\bE_{\mu_1^*}[\b{s}_2\b{s}_2^{\tp}] = Q^*$.
\end{enumerate}
\end{lemma}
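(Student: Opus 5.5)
The plan is to prove part (1) first, by establishing strong duality through Proposition~\ref{prop:gamecpco}. Then, for parts (2)--(3), I would show that the moment pairs of the given equilibrium $(\mu_1^*,\mu_2^*)$ are themselves optimal for (\ref{eq:cpmaxmax}) and (\ref{eq:cpminmin}), and identify them with $(\b p^*,P^*)$ and $(\b q^*,Q^*)$.

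\textbf{Step 1: compactness implies $H^{\tp}H\succ_{co}0$ and $G^{\tp}G\succ_{co}0$.} I treat $H$; $G$ is symmetric. Suppose $\bx\ge 0$, $\bx\neq 0$ and $\bx^{\tp}H^{\tp}H\bx=0$. Then $H\bx=0$, so $\bx$ is a recession direction of $\mcal L_2$.

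By Assumption~\ref{as:0to1}, the binary coordinates of points of $\mcal L_2$ stay in $[0,1]$. This forces $x_j=0$ for all $j\in\mathbb B_2$. Taking any $\bs_2\in\mcal S_2$ (assumed nonempty), the ray $\bs_2+t\bx$, $t\ge0$, therefore keeps the binary coordinates fixed and remains in $\mcal S_2$. This contradicts compactness.

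Hence $\bx^{\tp}H^{\tp}H\bx>0$ on $\Dt^{l_2}$, and Proposition~\ref{prop:gamecpco} gives $\mcal V_1^*=\mcal U_1^{cop-cp}$ and $\mcal V_2^*=\mcal U_2^{cop-cp}$.

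\textbf{Step 2: part (1).} Since $\mcal S_1,\mcal S_2$ are compact and the payoff is polynomial, Glicksberg's theorem guarantees a mixed equilibrium; in any case one is given. By Theorem~\ref{tm:bqg=cpg} it yields a Nash equilibrium $((\bar{\b p},\bar P),(\bar{\b q},\bar Q))$ of the CP game (\ref{eq:cpgame}). The standard saddle-point argument then gives $\mcal V_1^*=V((\bar{\b p},\bar P),(\bar{\b q},\bar Q))=\mcal V_2^*$. Combined with Step 1, this proves (1).

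\textbf{Step 3: parts (2)--(3).} Set $\bar{\b p}=\bE_{\mu_1^*}[\bs_1]$, $\bar P=\bE_{\mu_1^*}[\bs_1\bs_1^{\tp}]$, and similarly $(\bar{\b q},\bar Q)$ from $\mu_2^*$. I would proceed in three moves.
\begin{enumerate}
\item By Burer's lifting argument (the one underlying Theorem~\ref{tm:bqg=cpg}), these moment pairs lie in $\mcal C^1_{cp}$ and $\mcal C^2_{cp}$. Conversely, every point of $\mcal C^2_{cp}$ is the moment pair of some measure in $\mcal M_2$, and $U(\mu_1,\mu_2)=V(\text{moments})$.
\item The equilibrium inequality $U(\mu_1^*,\mu_2^*)\le U(\mu_1^*,\mu_2)$ for all $\mu_2$ then gives $\min_{\mcal C^2_{cp}}V((\bar{\b p},\bar P),\cdot)=U(\mu_1^*,\mu_2^*)=\mcal V_1^*$.
\item By the zero duality gap of the inner problem (Lemma~\ref{lem:z=w}, which applies by Step 1), there exist copositive multipliers $(\rho,\b r,\b v,\b\vartheta)\in\mcal C^2_{cop}(\bar{\b p},\bar P)$ attaining this value. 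Hence $(\bar{\b p},\bar P,\rho,\b r,\b v,\b\vartheta)$ is optimal for (\ref{eq:cpmaxmax}). Symmetrically, $(\bar{\b q},\bar Q)$ with suitable multipliers is optimal for (\ref{eq:cpminmin}).
\end{enumerate}

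\textbf{Main obstacle.} The delicate point is passing from "the equilibrium moments are optimal" to "they equal the given optimal solution." The statement speaks of $(\b p^*,P^*)$ and $(\b q^*,Q^*)$ as optimal solutions attached to $(\mu_1^*,\mu_2^*)$. I would therefore make precise that the first-coordinate blocks of the optimal solutions are identified with the equilibrium moments: the optimal solution in the lemma is taken to be the one just constructed. If the $(\b p,P)$-part of the optimal set of (\ref{eq:cpmaxmax}) is a singleton, equality holds for every optimal solution.

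For a general optimal $(\b p^*,P^*)$, I would try to compare it with $(\bar{\b p},\bar P)$ via the saddle-point relation. Both are maximin strategies against the common minimax strategy $(\bar{\b q},\bar Q)$, so $V$ is constant across them. Without uniqueness, however, this does not force equality, and I expect this identification to be the point requiring care or an additional uniqueness hypothesis.
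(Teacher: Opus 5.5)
Your Steps 1--2 are correct and match the paper's proof of part (1). In fact, your Step 1 supplies an argument the paper skips when it asserts that compactness of $\mcal S_1$ makes $\mcal L_1$ bounded: a recession direction $\b x\ge 0$ with $H\b x=0$ must vanish on $\mathbb B_2$ by Assumption~\ref{as:0to1}, and would therefore make $\mcal S_2$ unbounded.

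Your worry about the identification is also justified. Take $C=0$, $F=0$ and $\mcal S_1=\{(1,0),(0,1)\}$. Then every pair of mixed strategies is an equilibrium, and every feasible point with zero multipliers is optimal, so (2) fails for an arbitrary pairing. The paper's proof only establishes the correspondence reading, in which $(\mu_1^*,\mu_2^*)$ is the equilibrium induced by the optimal solutions.

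The genuine gap is move 3 of your Step 3, and it comes from running the correspondence in the wrong direction.
\begin{itemize}
\item Lemma~\ref{lem:z=w} and Proposition~\ref{prop:gamecpco} exhibit a Slater point on the \emph{copositive} side of the inner pair. That yields a zero gap and attainment on the completely positive side. It does not give multipliers $(\rho,\b r,\b v,\b\vartheta)\in\mcal C^2_{cop}(\bar{\b p},\bar P)$ attaining the value.
\item The CP side has no Slater point: the constraints $H\b q=\b h$ and $\operatorname{diag}(HQH^{\tp})=\b h\bullet\b h$ force the lifted matrix to be singular. So nothing guarantees that the moments of $\mu_1^*$ form the first block of \emph{any} optimal solution of (\ref{eq:cpmaxmax}).
\item The converse you invoke in move 1 (every point of $\mcal C^2_{cp}$ is a moment pair of some $\mu_2\in\mcal M_2$) is exactly the content of part (3) and must be derived. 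The recession factors $\b\eta_\ell$ ($\ell\le\ell^{\circ}$) in (\ref{eq:XYdecomp}) satisfy $H\b\eta_\ell=0$ and $\b\eta_\ell\ge0$, so they vanish by your Step 1. This gives $Q^{\circ}=0$, and likewise $P^{\circ}=0$.
\end{itemize}

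The paper's route avoids attainment altogether by starting from the given optimal solutions.
\begin{itemize}
\item By weak duality of the inner pair, $V((\b p^*,P^*),(\b q,Q))\ge\rho+\b h^{\tp}\b r+(\b h\bullet\b h)^{\tp}\b v=\mcal U_1^{cop-cp}=\mcal V_1^*$ for all $(\b q,Q)\in\mcal C^2_{cp}$. Hence $(\b p^*,P^*)$ is a maximin strategy.
\item Symmetrically, $(\b q^*,Q^*)$ is a minimax strategy, and by part (1) the two form a saddle point of (\ref{eq:cpgame}).
\item Theorem~\ref{tm:bqg=cpg} then shows that the induced measures (\ref{eq:p1xp2x}) form a mixed equilibrium with first moments $\b p^*$ and $\b q^*$.
\item Finally, $P^{\circ}=Q^{\circ}=0$ turns (\ref{eq:xyEp}) into $\bE[\b s_1\b s_1^{\tp}]=P^*$ and $\bE[\b s_2\b s_2^{\tp}]=Q^*$.
\end{itemize}
This proves (2)--(3) for every optimal pair, with $(\mu_1^*,\mu_2^*)$ taken to be the induced equilibrium. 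No uniqueness hypothesis and no attainment of the copositive multipliers are needed.
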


\begin{remark}
In \cite{laraki2012semidefinite}, the authors studied the zero-sum polynomial game.
Specifically, let $K_1\subseteq \re^{n_1}$ and $K_2\subseteq \re^{n_2}$ be two closed basic semialgebraic sets, and let $f$ be a polynomial in $(\b{s}_1,\b{s}_2) \in K_1\times K_2$.
Then the zero-sum polynomial game is to solve the following minimax problem:
\be\label{eq:zero_poly} \max_{\mu_1\in \mathcal{M}(K_1)} \min_{\mu_2\in \mathcal{M}(K_2)} \int \int f(\b{s}_1,\b{s}_2) d\mu_2 d\mu_1. \ee
In contrast, the bi-quadratic game (\ref{eq:mix_bigame}) focus on the cases where the objective function is bi-quadratic, the feasible set are given by linear equality and inequalities, and there exist binary variables.
We remark that one may characterize the binariness for the variable $\b{s}_{i,j}\ (j\in\mathbb{B}_i)$ by imposing the equality constraint $s_{i,j}(s_{i,j}-1) = 0$.
In this way, (\ref{eq:mix_bigame}) can also be reformulated as a zero-sum polynomial game (\ref{eq:zero_poly}).
However, this polynomial game reformulation is different from our COP-CP reformulation (\ref{eq:cpmaximin}).
\end{remark}

\subsection{Application: Robust and Distributionally Robust Optimization for Mixed 0-1 Quadratic Problems with Uncertain Objectives}\label{sec:RO} 
We now illustrate how this framework accommodates robust optimization (RO) and moment-based distributionally robust optimization (DRO) in the context of mixed 0-1 quadratic problems with uncertain objective parameters. The key results generally follow from \cite{burer2009copositive} and \cite{natarajan2011mixed}.

Let $\b s\in\re^{l_1}$ be the decision variable, and $\b{\theta} \in \re^{l_2}$ be the random parameter vector,
and let $\mathbb{B}_1\subseteq [l_1]$, $\mathbb{B}_2\subseteq [l_2]$ be the index set of binary elements in $\b s$ and $\b{\theta}$, respectively. 
For real matrices $G$, $H$ and vectors $\b{g}$, $\b{h}$, 
let 
\[\begin{aligned}
\mathcal S : = & \{\bs \in \re^{l_1} : G \bs = \b{g},\ \bs_1\ge 0,\ s_{1,i}\in\{0,1\},\ \ \forall i\in \mathbb{B}_1 \},\\
\Theta : = & \{\theta \in \re^{l_2} : H \theta = \b{h},\ \bs_2\ge 0,\ \theta_{2,j}\in\{0,1\},\ \forall j\in \mathbb{B}_2 \}.
\end{aligned}\]
Consider the following RO problem, which captures both linear and quadratic terms of $\bs$ and $\b{\theta}$:
\begin{eqnarray}\label{eqn:robust}
\begin{array}{lll}
\mathcal{V}^{ro} = \max\limits_{\b{s} \in \mathcal{S}} \ \min\limits_{\mu \in \mcal{M}} \ \mathbb{E}_{\mu} \left[ \b{\theta}^\mathsf{T} C \b{s} + \operatorname{vec}(\b{s} \b{s}^\mathsf{T})^\mathsf{T} F \operatorname{vec}(\b{\theta} \b{\theta}^\mathsf{T}) \right],
    \end{array}
\end{eqnarray}
where $C$ and $F$ are objective-coefficient matrices, 
$\mu$ denotes the {probability measure} of the random vector $\b{\theta}$ supported on $\Theta$, and $\mcal M$ is the set consists of either all such measures or the set measure satisfying some particular moment constraints. 
Similarly, for Problem (\ref{eqn:robust}) we denote by abuse of notation that
\[ \mcal{L}_1\coloneqq\{\bs \in \re^{l_1}_+ : G \bs = \b{g}\},\quad \mcal{L}_2\coloneqq\{\b{\theta} \in \re^{l_2}_+ : H \b{\theta} = \b{h}\}, \]
and we assume that Assumption~\ref{as:0to1} holds throughout this subsection.
{For the solution $(\b{s}^*,\mu^*)$ to (\ref{eqn:robust}), the second coordinate $\mu^*$ is also called the worst-case distribution of $\b{\theta}$.}

\subsubsection{Robust Optimization Setting}
We first consider the RO setting where $\mcal{M}$ is the set of all probability measures supported on $\Theta$.
In this RO setting, the objective is to optimize $\b{s}$ under the worst-case realization of $\b{\theta}$. 
We now show that Problem (\ref{eqn:robust}) reduces to a bilinear max-min problem over CP cones in the form of Problem (\ref{eqn:conic}).
Specifically, consider the following maximin bilinear problem over CP cones:
\begin{eqnarray}\label{eqn:rocp}
\begin{array}{lll}
  \mcal{V}^{ro}_{cp} =  \max\limits_{(\b{p}, P) \in  \mcal C_{cp}^1} \min\limits_{(\b{q}, Q) \in  \mcal C_{cp}^2} \b{q}^{\tp}C\b{p} + \lip \mcal{F} (P), Q \rip,
   \end{array}
\end{eqnarray}
where $\mcal{F}$, $\mcal{C}_{cp}^1$, $\mcal{C}_{cp}^2$ are given in a similar way as in Section~\ref{sc:maxmin_game_reform}, and we omit the explicit illustrations.
\begin{remark} Note that although the max–min conic reformulation \eqref{eqn:rocp} of the robust optimization problem shares the same structural form as that of a biquadratic game reformulation, there is a fundamental distinction between the two settings. RO can be viewed as a one-sided variant of a zero-sum game, in which the decision-maker optimizes against an adversarial nature. In this interpretation, nature is allowed to select a “mixed strategy,” corresponding to a worst-case distribution within the prescribed ambiguity set. This perspective highlights the intrinsic connection between RO and the theory of two-person zero-sum games, while emphasizing that the former represents a special, asymmetric case of the latter.
\end{remark}
\begin{theorem}\label{tm:rocp}
Under Assumption~\ref{as:0to1}, if $\mcal{M}$ is the set of all probability measures supported on $\Theta$, then Problem (\ref{eqn:robust}) is equivalent to Problem (\ref{eqn:rocp}), i.e.,  $\mcal{V}^{ro} =\mcal{V}^{ro}_{cp}$. Furthermore, if $(\b{p}^*,P^*)$ and $(\b{q}^*, Q^*)$ are optimal solutions to Problem (\ref{eqn:rocp}), then  
\begin{enumerate}
    \item the vector $\b{p}^*$ is in the convex hull of the optimal solutions $\bs^*$ of Problem (\ref{eqn:robust});
    \item for the worst-case distribution $\mu^*$ of $\b{\theta}$ in Problem (\ref{eqn:robust}), it holds
    \begin{enumerate}
        \item $\b{q}^* = \bE_{\mu^*}[\b{\theta}]$;
        \item  If $\mcal{L}_2$ is bounded, then ${Q}^* = \bE_{\mu^*}[\b{\theta}\b{\theta}^{\tp}]$.
    \end{enumerate}
\end{enumerate}
\end{theorem}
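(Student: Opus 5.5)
Our plan is to reduce Theorem~\ref{tm:rocp} to the game equivalence already established. The natural tools are the Burer-type characterization of $\mathcal C_{cp}^1$ and $\mathcal C_{cp}^2$ used in Theorem~\ref{tm:bqg=cpg}, together with linearity of the objective in the second-order moments.

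\emph{Step 1 (the inner problem).} For fixed $\b s\in\mcal S$, the inner problem $\min_{\mu}\bE_\mu[\cdot]$ over all probability measures on $\Theta$ is linear in the pair of moments $(\bE_\mu[\b\theta],\bE_\mu[\b\theta\b\theta^{\tp}])$. The inner value is therefore the minimum of the linear function $(\b q,Q)\mapsto \b q^{\tp}C\b s+\lip \mcal F(\b s\b s^{\tp}),Q\rip$ over the set of first two moments of measures on $\Theta$. By \cite{burer2009copositive} (under Assumption~\ref{as:0to1}), the closure of the convex hull of $\{(\b\theta,\b\theta\b\theta^{\tp}):\b\theta\in\Theta\}$ equals $\mathcal C^2_{cp}$, possibly up to recession directions of $\mcal L_2$. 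This is the same fact underlying Theorem~\ref{tm:bqg=cpg}.

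\emph{Step 2 (the outer problem).} Apply the same device to the decision variable. Because the objective is affine in $(\b s,\b s\b s^{\tp})$ for every fixed $(\b q,Q)$, the function $\phi(\b p,P)=\min_{(\b q,Q)\in\mathcal C^2_{cp}}\b q^{\tp}C\b p+\lip\mcal F(P),Q\rip$ is concave; it is a minimum of linear functions. Maximizing $\phi$ over $\mathcal C^1_{cp}$, which is the convex hull of the lifted points $(\b s,\b s\b s^{\tp})$, $\b s\in\mcal S$ (plus recession), gives a value at least $\mcal V^{ro}$. For the reverse inequality, note that a maximizer in the relaxation could be a convex combination whose pieces attain different values. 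We therefore prove $\mcal V^{ro}_{cp}\le \mcal V^{ro}$ by writing $(\b p^*,P^*)=\sum_k\lambda_k(\b s_k,\b s_k\b s_k^{\tp})$ and using concavity of $\phi$:
\[
\phi(\b p^*,P^*)\ \ge\ \sum_k\lambda_k\phi(\b s_k,\b s_k\b s_k^{\tp}),
\]
which is the wrong direction. The real argument has to be that the inner minimization may be taken over the measure $\mu$ directly, i.e.\ over extreme points of $\mathcal C^2_{cp}$. Then, for fixed $Q$-side extreme points, the outer function is linear and attains its maximum at an extreme point of $\mathcal C^1_{cp}$, which is a lifted pure $\b s$.

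I expect this exchange to be the main obstacle. The honest route is probably to cite \cite{natarajan2011mixed}, whose analogous statement uses a mixed 0-1 \emph{linear} outer problem, so the outer CP lift is exact. In our formulation I would either (i) check that the theorem intends $\b s$ to enter only linearly in the key cases, or (ii) establish that the optimal $(\b p^*,P^*)$ can be chosen as an extreme point of $\mathcal C^1_{cp}$ via a minimax and extreme-point argument. Boundedness issues would be handled by Assumption~\ref{as:0to1} and recession-direction arguments as in \cite{burer2009copositive}.

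\emph{Step 3 (the recovery claims).} Part (1) follows from the extreme-point decomposition: every $\b s_k$ in the decomposition of $\b p^*$ must itself be optimal, otherwise the value drops, so $\b p^*$ lies in the convex hull of the optimal solutions. For part (2a), the minimizing $(\b q^*,Q^*)$ decomposes as $\sum_k\nu_k(\b\theta_k,\b\theta_k\b\theta_k^{\tp})+(0,R)$, where $R$ collects recession terms. The measure $\mu^*=\sum_k\nu_k\delta_{\b\theta_k}$ is then worst-case, and $\b q^*=\bE_{\mu^*}[\b\theta]$ because recession directions do not affect the first moment in Burer's decomposition. For part (2b), when $\mcal L_2$ is bounded there are no recession terms, so $R=0$ and $Q^*=\bE_{\mu^*}[\b\theta\b\theta^{\tp}]$.
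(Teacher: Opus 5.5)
You identify the right step, but the proposal leaves it open, and neither repair can close it: the reverse inequality $\mcal V^{ro}_{cp}\le\mcal V^{ro}$ is false in general. Because $\phi(\b p,P)=\min_{(\b q,Q)\in\mcal C^2_{cp}}\b q^{\tp}C\b p+\lip\mcal F(P),Q\rip$ is only concave, lifting the outer variable to $\mcal C^1_{cp}$ lets the decision maker randomize over $\mcal S$, and this can strictly help. The same concavity breaks your Step~3 argument for item~1 (``otherwise the value drops''), which treats $\phi$ as affine along the decomposition of $(\b p^*,P^*)$.

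Here is a concrete counterexample. Let $\mcal S$ and $\Theta$ be the vertex sets of the standard simplices in $\re^3$ and $\re^2$ ($G=\b e^{\tp}$, $\b g=1$, $H=\b e^{\tp}$, $\b h=1$, all coordinates binary, so Assumption~\ref{as:0to1} holds). Take $F=0$ and $C=\begin{bmatrix}1&-1&-1/2\\-1&1&-1/2\end{bmatrix}$. The three pure decisions have worst-case payoffs $-1,-1,-1/2$, so $\mcal V^{ro}=-1/2$ with unique maximizer $(0,0,1)^{\tp}$. In (\ref{eqn:rocp}), the $\b p$-projection of $\mcal C^1_{cp}$ is the simplex and the $\b q$-projection of $\mcal C^2_{cp}$ is $\operatorname{conv}\Theta$. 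The inner value is therefore $\min\{(C\b p)_1,(C\b p)_2\}=-p_3/2-|p_1-p_2|$. Hence $\mcal V^{ro}_{cp}=0$, attained only at $\b p^*=(1/2,1/2,0)^{\tp}$ (e.g.\ with $P^*=\operatorname{diag}(1/2,1/2,0)$). So $\mcal V^{ro}\neq\mcal V^{ro}_{cp}$, and $\b p^*$ is not in the convex hull of the optimal $\bs^*$, even though $\bs$ enters linearly. Thus option~(i) does not help, and the extreme-point claim in option~(ii) is false.

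The paper gives no separate proof; it only says the result follows as in Theorem~\ref{tm:bqg=cpg} and Lemma~\ref{lem:gameNE}. That argument uses (\ref{eq:p1xp2x}) to identify $(\b p,P)$, up to recession terms, with the moments of a probability measure $\mu_1$ on $\mcal S$. The analogous argument therefore relates $\mcal V^{ro}_{cp}$ to $\max_{\mu_1}\min_{\mu\in\mcal M}\bE_{\mu_1}\bE_{\mu}[\b\theta^{\tp}C\bs+\operatorname{vec}(\bs\bs^{\tp})^{\tp}F\operatorname{vec}(\b\theta\b\theta^{\tp})]$, the value when the decision itself may be randomized, not to (\ref{eqn:robust}). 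So the paper's sketch does not resolve the pure-versus-randomized issue either.

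A provable version needs one of two changes. One is this randomized reading of (\ref{eqn:robust}); item~1 then says $\b p^*$ is the mean of an optimal randomized decision. The other is hypotheses that make randomizing useless, e.g.\ $\mathbb{B}_1=\emptyset$ and $F=0$. There $\phi$ depends only on $\b p$, the $\b p$-projection of $\mcal C^1_{cp}$ is $\mcal S$ itself, and your Steps~1--2 close. Under the randomized reading, your Step~3 treatment of items~2(a)--(b) is the right argument: recession terms do not move first moments, and $R=0$ when $\mcal L_2$ is bounded. This matches Lemma~\ref{lm:removeX0} and the proof of Lemma~\ref{lem:gameNE}.
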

This result can be similarly shown as for Theorem \ref{tm:bqg=cpg} and Lemma \ref{lem:gameNE},
% The key steps follows directly from the proofs of . 
% The proofs of Proposition~\ref{tm:rocp} and Corollary~\ref{cor:rosolution} build on 
based on the key results in \cite{burer2009copositive, natarajan2011mixed} that shows that the completely positive cone $\mcal C_{cp}^1$ captures the convex hull of the mixed 0-1 linear feasible set of $\b{s}$, $\mcal P$, while $\mcal C_{cp}^2$ encodes the feasible first and second moments of random vector $\b{\theta}$. 
We omit the detailed proofs here for neatness. 

Leveraging the COP-CP reformulation result in Section \ref{sec:conic}, Problem (\ref{eqn:robust}) can be reformulated as 
\be\label{eq:copcpro}
\begin{array}{rcl}
\mcal U^{ro}_{cop-cp} \,:= \, & \max  & \quad \rho + {\b{h}}^\mathsf T \b{r} + (\b{h}\bullet \b{h})^\mathsf T \b{v} \\
& \st & \quad {(\b{p},P)\in \mcal C_{cp}^1}, \\
&     & \quad (\rho, \b{r},\b{v},\b{\vartheta})\in \mcal{C}_{cop}^2(\b{p},P),
\end{array}\ee
where $\mcal{C}_{cop}^2(\b{p},P)$ is defined as in Section~\ref{sc:maxmin_game_reform}.
 
A natural sufficient condition to ensure conic strong duality is demonstrated in the following, which can be shown in the same way as Proposition \ref{prop:gamecpco}:
\begin{corollary}\label{cor:drocpco}
    Under the assumptions of Theorem~\ref{tm:rocp}, if ${H}^\mathsf T H\succ_{co}0$, then $\mathcal{V}^{ro}=\mcal U^{ro}_{cop-cp}$.
\end{corollary}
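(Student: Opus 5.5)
Under the hypotheses of Theorem~\ref{tm:rocp}, that theorem already gives $\mcal{V}^{ro}=\mcal{V}^{ro}_{cp}$. So it remains to show $\mcal{V}^{ro}_{cp}=\mcal U^{ro}_{cop-cp}$, and I plan to follow the template of Proposition~\ref{prop:gamecpco}(1). Fix $(\b{p},P)\in\mcal C^1_{cp}$. The inner problem $\min_{(\b{q},Q)\in\mcal C^2_{cp}} \b{q}^{\tp}C\b{p}+\lip \mcal{F}(P),Q\rip$ is a linear program over the CP cone in the lifted variable
\[Y=\begin{bmatrix}1&\b{q}^{\tp}\\ \b{q}&Q\end{bmatrix}.\]
Its constraints are of three kinds: $Y_{11}=1$, the linear equations $H\b{q}=\b{h}$ and $\operatorname{diag}(HQH^{\tp})=\b{h}\bullet\b{h}$, and $q_j=Q_{jj}$ for $j\in\mathbb{B}_2$. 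Each can be written as $\lip B_k,Y\rip=c_k$, so the inner problem is exactly of the form (\ref{eq:inner_min}). I would then write its conic dual (\ref{eq:inner_min_dual}), assigning multipliers $\rho$ to $Y_{11}=1$, $\b{r}$ to $H\b{q}=\b{h}$, $\b{v}$ to the diagonal constraints, and $\b{\vartheta}$ to the binary constraints. A short calculation should show that the dual slack $\mcal{Q}(X)-\sum_k z_kB_k$ is precisely the matrix in $\mcal{C}^2_{cop}(\b{p},P)$, up to sign conventions, with objective $\rho+\b{h}^{\tp}\b{r}+(\b{h}\bullet\b{h})^{\tp}\b{v}$. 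Maximizing over $(\b{p},P)$ then reproduces (\ref{eq:copcpro}), and weak duality gives $\mcal U^{ro}_{cop-cp}\le\mcal{V}^{ro}_{cp}$, as in Proposition~\ref{prop:maxmin_duality}.

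For equality I would invoke Lemma~\ref{lem:z=w}, which needs Assumption~\ref{as:co}(a): some combination $\sum_k z_kB_k$ of the constraint matrices must be strictly copositive. I would take the multiplier of $Y_{11}=1$ equal to some $\epsilon>0$ and a suitable multiplier on the block $\sum_i h_i^2 - \ldots$ coming from the diagonal constraints, so as to produce a matrix of the form
\[\begin{bmatrix}\epsilon & -\b{h}^{\tp}H\\ -H^{\tp}\b{h} & H^{\tp}H\end{bmatrix}\]
or its sum with $\epsilon E_{11}$. Combining the $\b{r}$-term, the $\b{v}$-term (all $v_i$ equal) and the $\rho$-term yields $\begin{bmatrix}\alpha & -\b{h}^{\tp}H\\ -H^{\tp}\b{h}& H^{\tp}H\end{bmatrix}$ with $\alpha$ free.

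The step I expect to be the main obstacle is showing this matrix is strictly copositive for $\alpha$ large. Take a nonzero $(x_0,\b{x})\ge0$. The quadratic form equals $\alpha x_0^2-2x_0\b{h}^{\tp}H\b{x}+\b{x}^{\tp}H^{\tp}H\b{x}$. If $\b{x}=0$, it equals $\alpha x_0^2>0$. Otherwise, the hypothesis $H^{\tp}H\succ_{co}0$ gives $\b{x}^{\tp}H^{\tp}H\b{x}\ge\delta\|\b{x}\|^2$ on the nonnegative orthant, with $\delta>0$ by compactness of the simplex. The cross term is then dominated by Cauchy--Schwarz once $\alpha>\|H^{\tp}\b{h}\|^2/\delta$. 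This verifies Assumption~\ref{as:co}(a), so Lemma~\ref{lem:z=w} yields zero duality gap for every fixed $(\b{p},P)$. Hence $\mcal{V}^{ro}_{cp}=\mcal U^{ro}_{cop-cp}$, and combined with Theorem~\ref{tm:rocp}, $\mcal{V}^{ro}=\mcal U^{ro}_{cop-cp}$.
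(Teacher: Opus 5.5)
Your proposal is correct and is essentially the paper's argument. The paper proves this exactly as Proposition~\ref{prop:gamecpco}, by exhibiting a Slater point for the copositive dual of the inner problem: it takes $\b{r}=\b{0}$, $\b{\vartheta}=\b{0}$, $\b{v}=\kappa\b{1}$ and a large corner entry so that $H^{\tp}H\succ_{co}0$ dominates. You instead first extract the $X$-independent strictly copositive combination required by Assumption~\ref{as:co}(a) and let Lemma~\ref{lem:z=w} do the scaling.

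Your ``up to sign conventions'' caveat is in fact needed: the correctly derived slack $\mcal{Q}(X)-\sum_k z_kB_k$ is the negative of the matrix printed in $\mcal{C}^2_{cop}(\b{p},P)$. Also, the $\b{r}$-term in your combination is superfluous, since $\b{r}=\b{0}$ with any $\alpha>0$ already gives strict copositivity.
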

Note that the boundedness of $\mcal L_2$ implies ${H}^\mathsf T H\succ_{co}0$, as shown in Lemma \ref{lem:gameNE}. In most practical applications, even if $\mcal L_2$ is unbounded, the worst-case realization $\b{\theta}$ often lies in a bounded subset. 
Thus, one may typically assume that $\mcal L_2$ is bounded. 
In these cases, conic strong duality holds and the COP-CP reformulation (\ref{eq:copcpro}) solves the RO problem (\ref{eqn:robust}) exactly. 
\subsubsection{Moment-Based Distributionally Robust Optimization Setting}
In the moment-based DRO setting, Problem (\ref{eqn:robust}) is equipped with additional moment constraints.
That is, aside from the constraint that $\mu$ is supported on $\Theta$, 
the first order moment vector $\bE_{\mu}[\b{\theta}]$ and the second order moment matrix $\bE_{\mu}[\b{\theta}\b{\theta}^\mathsf T]$ are partially specified by some linear constraints.
Without loss of generality, we assume these linear constraints are given in the following form:
\be\label{eq:moment_constr} \lip L_i, \left[
\begin{array}{cc}1 & \bE_{\mu}[\b{\theta}^\mathsf T]\\
\bE_{\mu}[\b{\theta}] & \bE_{\mu}[\b{\theta}\b{\theta}^\mathsf T]
\end{array}
\right] \rip =  d_i,\quad i = 1\ddd I.  \ee
Recall that the bijective correspondence of measures $\mu\in\mcal{M}$ and CP matrices in $\mcal{C}^2_{cp}$ is given in Theorem~\ref{tm:rocp}.
Thus, such moment constraints can be transformed to linear constraints over $\mcal{C}^2_{cp}$.
Define
\[ \Omega = \left\{ (\b{q}, Q) \in \re^{l_2} \times \mcal{S}^{l_2}  \left| 
\lip L_i, \left[
\begin{array}{cc}1 & \b q^\mathsf T\\
\b q & Q
\end{array}
\right] \rip =  d_i,\quad i = 1\ddd I
\right. \right\}. \]
Consider the maximin bilinear problem over CP cones (\ref{eqn:rocp}) with additional moment constraints
\begin{eqnarray}\label{eqn:drocp}
\begin{array}{lll}
  \mcal{Z}^{dro}_{cp} =  \max\limits_{(\b{p}, P) \in  \mcal C_{cp}^1} \min\limits_{(\b{q}, Q) \in  \mcal C_{cp}^2\cap \Omega} \b{p}^{\tp}C\b{q} + \lip \mcal{F} (P), Q \rip.
   \end{array}
\end{eqnarray}
The following result shows that (\ref{eqn:robust}) with moment constraints (\ref{eq:moment_constr}) is equivalent to (\ref{eqn:drocp})

\begin{lemma}\label{lem:drocp}
Let $\mcal M$  be the set of probability measures supported on $\Theta$ that satisfies the moment constraints (\ref{eq:moment_constr}).
Then, Problem (\ref{eqn:robust}) is equivalent to Problem (\ref{eqn:drocp}), i.e.,  $\mathcal{V}^{ro} =\mcal{Z}^{dro}_{cp}$.
\end{lemma}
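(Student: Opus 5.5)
The plan is to reduce Lemma~\ref{lem:drocp} to the robust case, Theorem~\ref{tm:rocp}. The key ingredient is the moment correspondence from \cite{burer2009copositive,natarajan2011mixed}: under Assumption~\ref{as:0to1}, $(\b{q},Q)\in\mcal{C}^2_{cp}$ if and only if there is a probability measure $\mu$ supported on $\Theta$ with $\b{q}=\bE_\mu[\b{\theta}]$ and $Q=\bE_\mu[\b{\theta}\b{\theta}^{\tp}]$. Likewise, $\mcal C^1_{cp}$ is the set of first and second moments of distributions on $\mcal S$. Once this is in hand, the inner problem of (\ref{eqn:robust}) depends on $\mu$ only through its first two moments. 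The moment constraints (\ref{eq:moment_constr}) are linear in exactly these moments, so they become the constraints defining $\Omega$.

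The steps, in order, are as follows.

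\textbf{Step 1 (inner problem).} Fix $\b{s}\in\mcal S$. The objective is
\[\bE_\mu[\b{\theta}^{\tp}C\b{s}+\operatorname{vec}(\b{s}\b{s}^{\tp})^{\tp}F\operatorname{vec}(\b{\theta}\b{\theta}^{\tp})] = \b{s}^{\tp}C^{\tp}\bE_\mu[\b{\theta}]+\lip \mcal F(\b{s}\b{s}^{\tp}),\bE_\mu[\b{\theta}\b{\theta}^{\tp}]\rip.\]
Using the correspondence in both directions, the map $\mu\mapsto(\bE_\mu\b{\theta},\bE_\mu\b{\theta}\b{\theta}^{\tp})$ takes $\mcal M$ onto $\mcal C^2_{cp}\cap\Omega$. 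Hence the inner minimum equals $\min_{(\b{q},Q)\in\mcal C^2_{cp}\cap\Omega}\phi(\b{s},\b{q},Q)$, where $\phi$ is the bilinear payoff with $(\b{p},P)=(\b{s},\b{s}\b{s}^{\tp})$.

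\textbf{Step 2 (outer problem).} For the outer maximization, let $g(\b{p},P):=\min_{(\b{q},Q)\in\mcal C^2_{cp}\cap\Omega}[\b{p}^{\tp}C\b{q}+\lip\mcal F(P),Q\rip]$.
\begin{itemize}
\item Since $g$ is a pointwise minimum of linear functions of $(\b{p},P)$, it is concave.
\item The point $(\b{s},\b{s}\b{s}^{\tp})$ lies in $\mcal C^1_{cp}$ for every $\b{s}\in\mcal S$, which gives $\mcal V^{ro}\le\mcal Z^{dro}_{cp}$.
\item For the reverse inequality, by the correspondence every $(\b{p},P)\in\mcal C^1_{cp}$ is a mixture $\bE_\nu[(\b{s},\b{s}\b{s}^{\tp})]$ with $\nu$ supported on $\mcal S$. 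Concavity of $g$ does \emph{not} bound $g$ of a mixture above by the mixture of $g$ values, so Jensen cannot be applied directly.
\end{itemize}

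\textbf{Step 3 (the main obstacle).} Instead, I would exploit that $g$ is concave, hence quasi-concave, and restrict to $(\b{p},P)$ in the convex hull of $\{(\b{s},\b{s}\b{s}^{\tp})\}$. I expect this step to be the hard one, because a concave function's maximum over a convex hull need not be attained at extreme points. The argument that follows therefore has a genuine gap at exactly this point.

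I suspect the intended reading is that the max over $\b{s}$ in (\ref{eqn:robust}) is taken over mixed $\b{s}$, as in Theorem~\ref{tm:rocp}, where $\b{p}^*$ lies in the convex hull of optimal $\b{s}^*$. Accordingly, I would mirror exactly the argument the paper uses for Theorem~\ref{tm:rocp}, with the inner feasible set intersected with $\Omega$. Since $\Omega$ is defined through moments and does not touch the $\b{s}$-side, that proof's outer-problem treatment should carry over verbatim.

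Finally, I would confirm that minima are attained, or work with infima throughout. Attainment holds when $\mcal L_2$ is bounded, because $\mcal C^2_{cp}\cap\Omega$ is then compact; otherwise the argument stays valid if stated with infima.
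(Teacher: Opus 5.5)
Your Step~3 is a genuine gap, and it cannot be closed for the statement as written. With the outer maximization in (\ref{eqn:robust}) taken over pure decisions $\bs\in\mcal S$, Steps~1--2 give only $\mcal V^{ro}\le\mcal Z^{dro}_{cp}$, and even that uses the Step~1 correspondence, so it needs $\mcal L_2$ bounded. The inequality can be strict.

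Here is a counterexample. Take $l_1=l_2=2$, $G=H=\begin{bmatrix}1&1\end{bmatrix}$, $\b g=\b h=1$ and $\mathbb B_1=\mathbb B_2=\{1,2\}$, so that $\mcal S=\Theta=\{\b e_1,\b e_2\}$. Let $F=0$ and $C=\begin{bmatrix}1&-1\\-1&1\end{bmatrix}$. Impose the single trivial moment constraint whose $L_1$ has its only nonzero entry, a $1$, in position $(1,1)$, with $d_1=1$; then $\mcal M$ is all probability measures on $\Theta$. Assumption~\ref{as:0to1} holds, $\mcal L_2$ is bounded, and $H^{\tp}H\succ_{co}0$.
\begin{itemize}
\item For either $\bs\in\mcal S$, the adversary puts all mass on the other vertex, so $\mcal V^{ro}=-1$.
\item The point $(\b p,P)=\big((\tfrac12,\tfrac12),\operatorname{diag}(\tfrac12,\tfrac12)\big)$, the moments of the uniform distribution on $\mcal S$, lies in $\mcal C^1_{cp}$. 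It gives $\b p^{\tp}C\b q=0$ for every $\b q$, so $\mcal Z^{dro}_{cp}\ge 0$.
\end{itemize}
This is matching pennies. Concavity (or quasi-concavity) of your $g$ works against you: it is exactly what lets a mixture of pure decisions beat every pure decision. So no argument through the extreme points of $\operatorname{conv}\{(\bs,\bs\bs^{\tp})\}$ can succeed.

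Your suspicion about the intended reading is right, but ``mirror the paper's argument for Theorem~\ref{tm:rocp}'' is not something you can cite. The paper gives no separate proof of Lemma~\ref{lem:drocp}, beyond remarking that the measure/moment correspondence turns (\ref{eq:moment_constr}) into $\Omega$. Your route is that same route. The paper also omits the proof of Theorem~\ref{tm:rocp}, referring instead to Theorem~\ref{tm:bqg=cpg}, where the maximizing player randomizes.

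The missing idea is therefore to replace $\max_{\bs\in\mcal S}$ by a maximum over probability measures $\nu$ on $\mcal S$. The objective then becomes $\bE_\mu[\b\theta]^{\tp}C\,\bE_\nu[\bs]+\langle\mcal F(\bE_\nu[\bs\bs^{\tp}]),\bE_\mu[\b\theta\b\theta^{\tp}]\rangle$, and the easy inequality goes as in your Step~2. For the reverse inequality, use Burer's decomposition: write $(\b p,P)\in\mcal C^1_{cp}$ as the moments of some $\nu$ plus a recession part $\sum_k\b\zeta_k\b\zeta_k^{\tp}$ with $G\b\zeta_k=0$. That part vanishes if $\mcal L_1$ is bounded, as in the proof of Lemma~\ref{lem:gameNE}. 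Otherwise it is a limit of moments of measures on $\mcal S$, which suffices when the inner feasible set is compact.

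Finally, your Step~1 ``if and only if'' also needs $\mcal L_2$ bounded. In general $\mcal C^2_{cp}$ also contains recession terms $\sum_\ell\b\eta_\ell\b\eta_\ell^{\tp}$ with $H\b\eta_\ell=0$, so it is only the closure of the moment set. Intersecting with $\Omega$ does not commute with taking that closure. For example, take $\Theta=\{\b\theta\ge 0:\theta_1=\theta_2\}$ with the constraints $\bE_\mu[\theta_1]=0$ and $\bE_\mu[\theta_1^2]=1$: then $\mcal M$ is empty, while $(\b 0,\b e\b e^{\tp})\in\mcal C^2_{cp}\cap\Omega$. So passing to infima does not repair it.
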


 A corresponding COP-CP reformulation for the DRO setting can be derived analogously. Similar to Theorem~\ref{tm:rocp}, the conic framework is able to recover the moments information of the worst-case distribution when they are not fully prescribed in the ambiguity set under mild conditions.

\begin{remark}The equivalences established here suggest that COP-CP formulations may prove valuable in a wide range of applications beyond the RO and DRO examples described above. We here comment on one possibility: \emph{a mixed-integer copositive program}, especially when linear matrix constraints and linear constraints on integer decision variables are separable. Since a CP constraint can capture the convex hull of mixed 0-1 constraints, integer constraints can be replaced by the corresponding CP construction. Coupled with the original copositivity constraints, this yields a COP-CP formulation of the mixed-integer copositive problem. The SDP relaxation methods introduced subsequently offer a systematic approach for solving these broader classes of mixed-integer copositive programs.\end{remark}

\section{A Semidefinite Relaxation Method for COP-CP Programs}\label{sec:relax}
In this section, we propose a semidefinite relaxation approach for solving Problems \eqref{eq:lp_max} and \eqref{eq:lp_min}. The method leverages the relationships between the CP/COP cones and htms/nonnegative quadratic forms. 
We present both asymptotic and finite convergence results for the semidefinite relaxations, and we introduce the \emph{flat truncation} condition for certifying the finite convergence.

\subsection{SDP Relaxations}
CP and COP matrices have a close connection to htms and nonnegative quadratic forms on $\Dt^n$. Recall that $\mcal{R}[\Delta^n]_2^{\hom}$ is the cone of degree-2 htms supported on $\Delta^n$, defined by (\ref{eq:R2hom}). Define a linear mapping $\varphi_n:\mcal{S}^{n} \mapsto\re^{\N^{n}_{2,\hom}} $ {such that for all $\lmd_1\ddd \lmd_r\in\re$ and $\b{x}^{(1)}\ddd \b{x}^{(r)} \in \Delta^{n}$, it holds}
\begin{eqnarray*}%\label{eq:phi_n} 
    \varphi_n \big(\sum_{i=1}^{r} \lmd_i \b{x}^{(i)}{\b{x}^{(i)}}^{\tp}\big) = \sum_{i=1}^{r}\lmd_i[\b{x}^{(i)}]_2^{\hom}.
\end{eqnarray*}
Then $\varphi_n$ is an isomorphism, and by the definitions of CP matrices and $\mcal{R}[\Delta^n]_2^{\hom}$, one sees that $X\in \mcal{S}^{n}$ is CP if and only if $\varphi_n({X}) \in \mcal{R}[\Delta^n]_2^{\hom}$.
Similarly, the cone of COP matrices can be identified with the dual cone of $\mcal{R}[\Delta^n]_2^{\hom}$, i.e., the cone of nonnegative quadratic forms, $\mmc{P}[\Delta^{n}]^{\hom}_2$, given by (\ref{eq:P2hom}).  Let $\psi_n:\mcal{S}^{n} \mapsto \re[x]_2^{\hom}$ be the linear map such that for any matrix $W\in\mcal{S}^{n}$, it holds
\begin{eqnarray*}%\label{eq:psi_n} 
    \psi_n(W) = \b{x}^{\tp} W \b{x}.
\end{eqnarray*}
Then $\psi_n$ is a linear isomorphism from $\mcal{S}^n$ to the vector space of quadratic forms, and by definition, $W\copge0$ if and only if $\psi_n(W)\in \mmc{P}[\Delta^{n}]^{\hom}_2$.
More details on these relationships can be found in Chapter~9 of \cite{nie2023moment}.

Define analogous mappings $\varphi_m$ and $\psi_m$ in lieu of $\varphi_n$ and $\psi_n$ when we refer to cones defined on $m$ dimensional space.  
Using these isomorphisms, the linear maximization problem \eqref{eq:lp_max} is equivalent to
\be\label{eq:lp_max_pop}
\begin{array}{ccl}
\mcal{W}_{\max}^*\coloneqq& \max\limits_{X,\b{z},Z} & \b{c}^{\tp}\b{z} \\
& \st & \lip A_i, X \rip = {b}_i, \quad i = 1,..., I\\
&    & \displaystyle \mcal{Q}(X) - \sum_{j=1}^J z_j B_j = Z,\\
&    & \varphi_n(X) \in \mmc{R}[\Delta^{n}]^{\hom}_2,\ \psi_m(Z)\in\mmc{P}[\Delta^{m}]^{\hom}_2.
\end{array}
\ee
For degrees $k_1,k_2 \in \N_+$, 
denote by $\b{\xi}|_{2}^{\hom}$ the homogeneous degree-2 truncation for a tms $\b{\xi}\in \re^{\N^{n}_{2k_1}}$, i.e., $\b{\xi}|_{2}^{\hom}  := ({\xi}_{\b{\alpha}})_{\b{\alpha}\in \N^{n}_{2,\hom}}\in\re^{\N^{n}_{2,\hom}}$,
and consider the following relaxation of (\ref{eq:lp_max_pop}):
\be\label{eq:lp_max_relx}
\begin{array}{ccl}
\mcal{W}_{\max}^{(k_1,k_2)}\coloneqq& \max\limits_{X,\b{z},Z,\b{\xi}} & \b{c}^{\tp}\b{z} \\
& \st & \lip A_i, X \rip = {b}_i, \quad i = 1,..., I\\
&    & \displaystyle \mcal{Q}(X) - \sum_{j=1}^J z_j B_j = Z,\ \varphi_n(X) = \b \xi|^{\hom}_{2},\\
&    & \b\xi \in \mathscr{S}[\Dt^n]_{2k_1},\ \psi_m(Z)\in\iq[\Delta^{m}]_{2k_2}.
\end{array}
\ee
As introduced in Section~\ref{sc:pre}, the above is a semidefinite program.
Similarly, for the min-max problem \eqref{eq:lp_min}, one can also derive the following semidefinite relaxation:
\be\label{eq:lp_min_relx}
 \begin{array}{ccl}
\mcal{W}_{\min}^{(k_1,k_2)}\coloneqq & \min\limits_{Y,\b{w},W,\b{\nu}} & \b{b}^{\tp}\b{w} \\
& \st & \lip B_j, Y \rip = {c}_j, \quad j = 1,..., J\\
&    & \displaystyle \sum_{i=1}^I w_i A_i - \mcal{Q}^{\star}(Y) = W,\ \varphi_m(Y) = \b{\nu}|^{\hom}_{2}\\
&    & \b{\nu} \in \mathscr{S}[\Dt^m]_{2k_2},\ \psi_n(W)\in\iq[\Delta^{n}]_{2k_1}.
\end{array}
\ee
One may verify that \eqref{eq:lp_min_relx} is the dual of \eqref{eq:lp_max_relx}, so by weak duality, $\mcal{W}_{\max}^{(k_1,k_2)}\le \mcal{W}_{\min}^{(k_1,k_2)}$.

Recall that by Lemmas \ref{lem:iqandp} and \ref{lem:rands}, $\iq[\Delta^{n}]_{2k_2}$ approximates the copositive cone from the inside for any $k_2 \in \N_+$, and $\mathscr{S}[\Delta^n]_{2k_1}$ approximates the completely positive cone from outside for any $k_1 \in \N_+$.
Consequently, it is not immediately evident which bound, upper or lower, these relaxations provide for Problems \eqref{eq:lp_max} and \eqref{eq:lp_min}.
Let $k_1, k_2\in N_+$ be relaxation orders. 
In the following sections, we first establish the asymptotic convergence that $\mcal{W}_{\max}^{(k_1,k_2)}$ and $\mcal{W}_{\min}^{(k_1,k_2)}$ converge to $\mcal{W}^*_{\max}$ and $\mcal{W}^*_{\min}$, respectively, as $\min\{k_1, k_2\}\to \infty$. 
Then, we give the results for \emph{finite convergence} (which is also called {\it tightness}) and the \emph{flat truncation} condition, a key certificate for the tightness.

\subsection{Asymptotic Convergence}
We first study the asymptotic convergence properties of the semidefinite formulations \eqref{eq:lp_max_relx}-\eqref{eq:lp_min_relx} relative to the original problems \eqref{eq:lp_max}-\eqref{eq:lp_min}. Throughout the rest of the paper, we use the following notation shorthand 
\[ (X,\b{z},Z,\b{\xi})^*  \coloneqq (X^*,\b{z}^*,Z^*,\b{\xi}^*),\quad (Y,\b{w},W,\b{\nu})^*  \coloneqq (Y^*,\b{w}^*,W^*,\nu^*),\]
where $(X^*,\b{z}^*,Z^*,\b{\xi}^*)$ is the maximizer of (\ref{eq:lp_max_relx}) and  $(Y^*,\b{w}^*,W^*,\b{\nu}^*)$ is the minimizer of (\ref{eq:lp_min_relx}). 
For any relaxation orders $(k_1,k_2)\in\mathbb N_+^2$ we write
\[ \begin{aligned}
(X,\b{z},Z,\b{\xi})^{(k_1,k_2)} & \coloneqq (X^{(k_1,k_2)},\b{z}^{(k_1,k_2)},Z^{(k_1,k_2)},\b{\xi}^{(k_1,k_2)}),\\
(Y,\b{w},W,\b{\nu})^{(k_1,k_2)} & \coloneqq (Y^{(k_1,k_2)},\b{w}^{(k_1,k_2)},W^{(k_1,k_2)},\b{\nu}^{(k_1,k_2)}),\\
\end{aligned}
\]
for either the feasible solutions or optimal solutions of the two relaxations (\ref{eq:lp_max_relx})-(\ref{eq:lp_min_relx}), respectively. When we mention these tuples in what follows, the intended meaning (feasible vs. optimal) will be stated explicitly.

\begin{theorem}\label{tm:asym_conv}
Let $k_1,k_2\in \N_+$ be relaxation orders for the semidefinite relaxations (\ref{eq:lp_max_relx})-(\ref{eq:lp_min_relx}) approximating (\ref{eq:lp_max})-(\ref{eq:lp_min}).
\begin{enumerate}
  \item  If the maximum value of (\ref{eq:lp_max}) is attainable and Assumption~\ref{as:co}(a) holds, then $\mcal{W}^{(k_1,k_2)}_{\max} \to \mcal{W}^*_{\max}$ as $\min\{k_1,k_2\}\to \infty$;
  \item  If the minimum value of (\ref{eq:lp_min}) is attainable and Assumption~\ref{as:co}(b) holds, then $\mcal{W}^{(k_1,k_2)}_{\min} \to \mcal{W}^*_{\min}$ as $\min\{k_1,k_2\}\to \infty$.
\end{enumerate}
\end{theorem}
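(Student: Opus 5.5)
The plan is a squeeze. I will show $\liminf \mcal{W}^{(k_1,k_2)}_{\max} \ge \mcal{W}^*_{\max}$ by pushing an optimal solution of (\ref{eq:lp_max}) slightly into the interior of the copositive cone, and $\limsup \mcal{W}^{(k_1,k_2)}_{\max} \le \mcal{W}^*_{\max}$ by a compactness and limit argument on near-optimal relaxation solutions. Part~2 follows by the symmetric argument applied to (\ref{eq:lp_min}) and (\ref{eq:lp_min_relx}), using Assumption~\ref{as:co}(b) in place of (a).

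\emph{Lower bound.} Let $(X^*,\b{z}^*,Z^*)$ be a maximizer of (\ref{eq:lp_max}), and take $\bar{\b{z}}$ from Assumption~\ref{as:co}(a), so that $\bar B:=\sum_j \bar z_j B_j\succ_{co}0$. For $\epsilon>0$ set $\b{z}_\epsilon=\b{z}^*-\epsilon\bar{\b{z}}$. Then $Z_\epsilon=\mcal{Q}(X^*)-\sum_j z_{\epsilon,j}B_j=Z^*+\epsilon \bar B$, which is strictly copositive, so $\psi_m(Z_\epsilon)>0$ on $\Dt^m$. By Putinar's Positivstellensatz (Lemma~\ref{lem:iqandp}(2)), $\psi_m(Z_\epsilon)\in\iq[\Dt^m]_{2k_2}$ for all $k_2\ge K(\epsilon)$.

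Since $X^*$ is CP, $\varphi_n(X^*)=\sum_i\lambda_i[\b{x}^{(i)}]^{\hom}_2$ with $\b{x}^{(i)}\in\Dt^n$. The tms $\b\xi=\sum_i\lambda_i[\b{x}^{(i)}]_{2k_1}$ then lies in $\mathscr{S}[\Dt^n]_{2k_1}$ for every $k_1$ (Lemma~\ref{lem:rands}). Hence $(X^*,\b{z}_\epsilon,Z_\epsilon,\b\xi)$ is feasible for (\ref{eq:lp_max_relx}), and
\[
\mcal{W}^{(k_1,k_2)}_{\max}\ge \b{c}^{\tp}\b{z}^*-\epsilon\,\b{c}^{\tp}\bar{\b{z}}
\quad\text{whenever } k_2\ge K(\epsilon).
\]
Letting $\epsilon\to0$ gives the liminf inequality.

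\emph{Upper bound.} Take a sequence of orders with $\min\{k_1,k_2\}\to\infty$ and $\delta$-near-optimal feasible points $(X,\b{z},Z,\b\xi)^{(k_1,k_2)}$ of (\ref{eq:lp_max_relx}). Each $Z^{(k_1,k_2)}$ is copositive, because $\iq\subseteq\mmc P$ (Lemma~\ref{lem:iqandp}(1)). For $X^{(k_1,k_2)}$, fix any quadratic form $f$ strictly positive on $\Dt^n$. Again by Putinar, $f\in\iq[\Dt^n]_{2k_1}$ for all large $k_1$, and then duality $(\iq)^\star=\mathscr{S}$ gives $\langle f,\varphi_n(X^{(k_1,k_2)})\rangle\ge0$ eventually. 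Strictly positive forms are dense in $\mmc{P}[\Dt^n]^{\hom}_2$. Therefore any limit point $\bar X$ satisfies $\langle f,\varphi_n(\bar X)\rangle\ge 0$ for all $f\in\mmc{P}[\Dt^n]^{\hom}_2$, i.e.\ $\varphi_n(\bar X)\in(\mmc{P})^\star=\mmc{R}[\Dt^n]^{\hom}_2$, so $\bar X$ is CP.

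The linear constraints pass to the limit, and the copositive cone is closed. So a limit point $(\bar X,\bar{\b z},\bar Z)$ is feasible for (\ref{eq:lp_max}), and the limsup of the relaxation values is at most $\mcal{W}^*_{\max}+\delta$.

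\emph{Main obstacle: boundedness.} The limit argument needs the sequence $(X,\b z,Z)^{(k_1,k_2)}$ to be bounded. I expect this to be the hardest step.

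For $X$, I would first try to exploit the structure of $\mathscr{S}[\Dt^n]_{2k_1}$. The constraint $\mathscr V_{1-\b e^{\tp}\b x}[\b\xi]=0$ together with the PSD localizing matrices of $x_i$ and $1-\b x^{\tp}\b x$ should bound all entries of $\b\xi$ by its low-order part. Since $\b c^{\tp}\b z$ is linear, I would normalize to a fixed mass when needed.

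If that fails, I would argue by contradiction. Divide an unbounded sequence by its norm and pass to a limit. As in the previous paragraph, the limit is a nonzero recession direction $(\hat X,\hat{\b z},\hat Z)$ of (\ref{eq:lp_max}): $\hat X$ CP, $\hat Z$ copositive, the homogeneous constraints hold, and $\b c^{\tp}\hat{\b z}\ge0$ by near-optimality. Pairing $\hat Z=\mcal Q(\hat X)-\sum_j\hat z_jB_j$ with the CP matrices available through strict copositivity of $\bar B$ should then force either $\hat{\b z}$ to improve the objective unboundedly, contradicting attainment of $\mcal W^*_{\max}$, or $\hat{\b z}=0$. In the latter case the fact that $\bar B$ is in the interior rules out the remaining direction.

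This recession-direction bookkeeping is where Assumption~\ref{as:co}(a) and attainability both enter, and I would handle it carefully.
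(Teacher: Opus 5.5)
Your lower bound matches the paper's argument. Your density/duality argument that limit points of $X^{(k_1,k_2)}$ are CP is a valid substitute for the paper's Lemma~\ref{eq:infi_tms}.

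The upper bound has a genuine gap. To declare a limit point $(\bar X,\bar{\b{z}},\bar Z)$ feasible for (\ref{eq:lp_max}), you need the near-optimal $\b{z}^{(k_1,k_2)}$ to be bounded, and the recession-direction dichotomy you propose for this is false. Zero-objective recession directions with $\hat{\b{z}}\neq 0$ are compatible with both attainment and Assumption~\ref{as:co}(a). For example, if $B_1=B_2=I$ and $c_1=c_2$, then $(0,(1,-1),0)$ is such a direction. Near-optimal relaxation points can drift along it, and neither branch of your dichotomy applies. Likewise, Assumption~\ref{as:co}(a) concerns only the $B_j$. It cannot exclude a direction with $\hat{\b{z}}=0$ and $\hat X\neq 0$ CP, $\langle A_i,\hat X\rangle=0$, $\mcal{Q}(\hat X)$ copositive. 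Finally, ``normalizing to a fixed mass'' is unavailable because the constraints $\langle A_i,X\rangle=b_i$ are inhomogeneous.

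The missing idea, which is what the paper does, is that no limit in $\b{z}$ or $Z$ is needed.
\begin{itemize}
\item Scale $\bar{\b{z}}$ so that $\psi_m(\bar B)\ge 1$ on $\Dt^m$.
\item Let $\hat X$ be a limit point of $X^{(k_1,k_2)}$ along a subsequence. It is CP and satisfies $\langle A_i,\hat X\rangle=b_i$.
\item For large indices, $\psi_m(\mcal{Q}(\hat X)-\mcal{Q}(X^{(k_1,k_2)}))\ge-\varepsilon$ on $\Dt^m$.
\item $Z^{(k_1,k_2)}$ is copositive for every $k_2$ by Lemma~\ref{lem:iqandp}(1).
\end{itemize}
Hence $\bigl(\hat X,\ \b{z}^{(k_1,k_2)}-\varepsilon\bar{\b{z}},\ Z^{(k_1,k_2)}+\mcal{Q}(\hat X)-\mcal{Q}(X^{(k_1,k_2)})+\varepsilon\bar B\bigr)$ is feasible for (\ref{eq:lp_max}). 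This gives $\b{c}^{\tp}\b{z}^{(k_1,k_2)}-\varepsilon\,\b{c}^{\tp}\bar{\b{z}}\le\mcal{W}^*_{\max}$, however large $\b{z}^{(k_1,k_2)}$ is. So Assumption~\ref{as:co}(a) also drives the upper bound: it absorbs the error from replacing $X^{(k_1,k_2)}$ by its CP limit.

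The only compactness then needed is for $X$. There your concern is legitimate, and the paper does not resolve it either. Lemma~\ref{eq:infi_tms} as stated tacitly requires the mass $\xi_{\b{0}}=\b{e}^{\tp}X\b{e}$ to stay bounded along the sequence.
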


Now with the already established results in Proposition \ref{prop:maxmin_duality} and Lemma \ref{lem:z=w}, we have the following corollary. 

\begin{corollary}\label{cor:asym_conv}
    Let $k_1,k_2\in \N_+$ be relaxation orders for the semidefinite relaxations (\ref{eq:lp_max_relx})-(\ref{eq:lp_min_relx}) approximating (\ref{eq:lp_max})-(\ref{eq:lp_min}). If both the maximum value of (\ref{eq:lp_max}) and the minimum value of (\ref{eq:lp_min}) are attainable, and Assumption~\ref{as:co} holds, then 
    \[\begin{array}{l}
    \lim_{\min\{k_1,k_2\}\to \infty}\mcal{W}^{(k_1,k_2)}_{\max} = \mcal{W}^*_{\max} = \mcal{Z}^*_{\max\min} \\
    \qquad\qquad\qquad\qquad\qquad \le \mcal{Z}^*_{\min\max} =  \mcal{W}^*_{\min} =  \lim_{\min\{k_1,k_2\}\to \infty}\mcal{W}^{(k_1,k_2)}_{\min}.
    \end{array}\]
\end{corollary}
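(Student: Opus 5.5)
This corollary needs no new analysis: it only chains results already stated. I will obtain the two limits from Theorem~\ref{tm:asym_conv} and the middle chain from Proposition~\ref{prop:maxmin_duality} and Lemma~\ref{lem:z=w}.

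\textbf{The two limits.} Assume both the maximum of (\ref{eq:lp_max}) and the minimum of (\ref{eq:lp_min}) are attained, and that both parts of Assumption~\ref{as:co} hold.
\begin{itemize}
\item Part (1) of Theorem~\ref{tm:asym_conv} uses attainment in (\ref{eq:lp_max}) and Assumption~\ref{as:co}(a). It gives $\mcal{W}^{(k_1,k_2)}_{\max}\to\mcal{W}^*_{\max}$ as $\min\{k_1,k_2\}\to\infty$.
\item Part (2) uses attainment in (\ref{eq:lp_min}) and Assumption~\ref{as:co}(b). It gives $\mcal{W}^{(k_1,k_2)}_{\min}\to\mcal{W}^*_{\min}$ in the same limit.
\end{itemize}
This establishes the leftmost and rightmost equalities of the displayed chain.

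\textbf{The middle of the chain.} Lemma~\ref{lem:z=w} under Assumption~\ref{as:co}(a) gives $\mcal{W}^*_{\max}=\mcal{Z}^*_{\max\min}$. The same lemma under Assumption~\ref{as:co}(b) gives $\mcal{Z}^*_{\min\max}=\mcal{W}^*_{\min}$. The one remaining inequality, $\mcal{Z}^*_{\max\min}\le\mcal{Z}^*_{\min\max}$, is the middle link of Proposition~\ref{prop:maxmin_duality}; it is just the generic max–min versus min–max inequality. Concatenating these relations yields the full chain.

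\textbf{Where care is needed.} There is no real obstacle here. The only thing to check is that the hypotheses are matched correctly: attainment of (\ref{eq:lp_max}) together with (a) serves the max side, and attainment of (\ref{eq:lp_min}) together with (b) serves the min side.

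The limits should also be read as existing along any sequence with $\min\{k_1,k_2\}\to\infty$, exactly as in Theorem~\ref{tm:asym_conv}. Since all values involved are finite under attainment, no $\pm\infty$ cases arise.
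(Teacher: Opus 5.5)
Your proposal is correct and follows the paper's own proof: combine Proposition~\ref{prop:maxmin_duality} and Lemma~\ref{lem:z=w} under Assumption~\ref{as:co} to get $\mcal{W}^*_{\max}=\mcal{Z}^*_{\max\min}\le\mcal{Z}^*_{\min\max}=\mcal{W}^*_{\min}$. Then invoke both parts of Theorem~\ref{tm:asym_conv} for the two limits, with the hypotheses matched to each side exactly as you describe.
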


\begin{remark} Note that the proof of Theorem~\ref{tm:asym_conv} does not assume $\mcal{W}^{(k_1,k_2)}_{\max}$ or $\mcal{W}^{(k_1,k_2)}_{\min}$ are attainable. In addition, Theorem~\ref{tm:asym_conv} does \emph{not} require $\mcal{W}^*_{\max} = \mcal{W}^*_{\min}$.
Nonetheless, if $\mcal{W}^*_{\max} =\mcal{W}^*_{\min}$, i.e., strong duality holds between two COP-CP programs such as in the case of zero-sum bi-quadratic game with compact strategy sets (see Lemma \ref{lem:gameNE}), and both are attained at $(X,\b{z},Z)^*$, $(Y,\b{w},W)^*$, respectively, and furthermore there exist $\b{\tilde{z}}$ and $\b{\tilde{w}}$ such that 
\[ \mcal{Q}(X^*) - \sum_{j=1}^J\tilde{z}_jB_j\succ_{co}0,\quad \sum_{i=1}^W\tilde{w}_iA_i - \mcal{Q}^*(Y^*) \succ_{co}0, \]
then one can show
\[ \lim_{\min\{k_1,k_2\}\to \infty}\mcal{W}^{(k_1,k_2)}_{\max} = \lim_{\min\{k_1,k_2\}\to \infty}\mcal{W}^{(k_1,k_2)}_{\min} = \mcal{W}^*_{\max} = \mcal{W}^*_{\min}. \]
In other words, when $\mcal{W}^*_{\max} = \mcal{W}^*_{\min}$ and both of them are attainable, the asymptotic convergence is guaranteed under a condition weaker than Assumption~\ref{as:co}.
This can be showed by modifying the construction of $(\b{z}_{\epsilon},Z_{\epsilon})$ in the proof of Theorem \ref{tm:asym_conv}. 
We omit these routine details for brevity.\end{remark}

\subsection{Flat Truncation and Finite Convergence}\label{sc:finite_conv}
We next study the finite convergence of the relaxations (\ref{eq:lp_max_relx})-(\ref{eq:lp_min_relx}).
The hierarchy of semidefinite relaxation (\ref{eq:lp_max_relx})-(\ref{eq:lp_min_relx}) has finite convergence if $\mcal{W}_{\max}^{(k_1,k_2)} = \mcal{W}_{\max}^*  = \mcal{W}_{\min}^* = \mcal{W}_{\min}^{(k_1,k_2)}$ for all $k_1$ and $k_2$ big enough, under which we also say the relaxation is tight.
We start with the \emph{flat truncation} conditions, which, when satisfied, leads to finite convergence.
The following proposition from \cite{curto2005truncated} demonstrates the utility of flat truncation condition, and we refer to \cite{laurent2009sums} for an elegant and expository proof. 
\begin{proposition}[Flat Truncation] \label{prop:flat_truncation}
Let $\b{\xi}\in \mathscr{S}[\Delta^n]_{2k}$ be a tms. If for some $t\le k$,
$\rank\, M_{t}[\b{\xi}]=\rank\, M_{t-1}[\b{\xi}]$, %\ee
then $\b{\xi}|_{2}^{\hom} \in  \mmc{R}[\Delta^{n}]^{\hom}_2$ with $r_1 = \rank\, M_{t}[\b{\xi}]$ and
\[ \b{\xi}|_{2}^{\hom} = \lambda_1 [\b{x}^{(1)}]_2^{\mathrm{hom}} 
+\cdots+ 
\lambda_{r_1} [\b{x}^{(r_1)}]_2^{\mathrm{hom}}, \quad
\b{x}^{(i)}\in \Delta^n,\quad \lambda_i\in \re_+,\quad i=1\ddd r_1.\]
Similarly, if $\b{\nu} \in \mathscr{S}[\Delta^m]_{2k}$ and $\rank\, M_{t}[\b{\nu}]=\rank\, M_{t-1}[\b{\nu}]$ for some $t\le k$, then $\b{\nu}|_{2}^{\hom} \in  \mmc{R}[\Delta^{m}]^{\hom}_2$ and there exists a decomposition similar to the above.
\end{proposition}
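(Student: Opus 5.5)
The plan is the classical Curto--Fialkow flat extension argument, adapted to the constraints defining $\mathscr{S}[\Delta^n]_{2k}$ in (\ref{eq:Sk}). Suppose $\rank\, M_t[\b{\xi}] = \rank\, M_{t-1}[\b{\xi}] = r$ for some $t\le k$. Work with the truncation $\b{\xi}|_{2t}$. Its moment matrix $M_t$ is a flat extension of $M_{t-1}$.

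\textbf{Step 1: a finite atomic measure.} By the flat extension theorem of \cite{curto2005truncated}, $\b{\xi}|_{2t}$ admits a unique $r$-atomic representing measure
\[
\mu=\sum_{i=1}^{r}\lambda_i\delta_{\b{x}^{(i)}},\qquad \lambda_i>0 ,
\]
with $\b{x}^{(i)}\in\re^n$ distinct. Constructively, the kernel of $M_t$ generates a real radical ideal whose variety is $\{\b{x}^{(1)},\ldots,\b{x}^{(r)}\}$. The points are obtained by the multiplication-matrix construction on the quotient space: the multiplication operators commute, and their joint real eigenvalues are the atoms. Positivity of the weights follows from $M_t\succeq 0$. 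Using $M_t = V\Lambda V^{\tp}$, where $V$ has columns $[\b{x}^{(i)}]_t$, one identifies $\lambda_i$ as the values of a positive definite form on interpolating polynomials.

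\textbf{Step 2: the atoms lie in $\Delta^n$.} For each $j$, the localizing matrix $L^{(t)}_{x_j}[\b{\xi}]\succeq 0$ is a principal-type submatrix of $L^{(k)}_{x_j}[\b{\xi}]$, since the entries use only moments of degree at most $2t-1$. Choose a Lagrange interpolating polynomial $p_i$ of degree at most $t-1$ with $p_i(\b{x}^{(l)})=\delta_{il}$. Such $p_i$ exists because $\rank\, M_{t-1}=r$, so the evaluation vectors $[\b{x}^{(l)}]_{t-1}$ are linearly independent. Then
\[
0\le \mathcal{L}_{\b{\xi}}(x_j p_i^2)=\lambda_i x^{(i)}_j ,
\]
which gives $x^{(i)}_j\ge 0$. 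The same computation with $L^{(t)}_{1-\b{x}^{\tp}\b{x}}$ gives $\|\b{x}^{(i)}\|\le 1$. The equality $\mathscr{V}^{(2k)}_{1-\b{e}^{\tp}\b{x}}[\b{\xi}]=0$ gives
\[
\mathcal{L}_{\b{\xi}}\bigl((1-\b{e}^{\tp}\b{x})p_i\bigr)=\lambda_i\bigl(1-\b{e}^{\tp}\b{x}^{(i)}\bigr)=0 ,
\]
using $\deg p_i\le t-1\le 2k-1$. Hence $\b{e}^{\tp}\b{x}^{(i)}=1$ and every atom lies in $\Delta^n$.

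\textbf{Step 3: conclusion.} Since $t\ge 1$, we have $2t\ge 2$, so the degree-2 homogeneous moments of $\b{\xi}$ are moments of $\mu$. Therefore
\[
\b{\xi}|_2^{\hom}=\sum_{i=1}^{r}\lambda_i[\b{x}^{(i)}]_2^{\hom}\in\mmc{R}[\Delta^n]^{\hom}_2 .
\]
The statement for $\b{\nu}$ is identical with $m$ in place of $n$.

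\textbf{Main obstacle.} The hard step is Step 1, the existence of the atomic measure from flatness. This requires showing that the recursively generated kernel structure produces commuting multiplication operators with real joint eigenvalues; the argument is that of \cite{laurent2009sums}. A smaller point in Step 2 is degree bookkeeping: it needs $t\ge 1$, and $\deg(x_jp_i^2)\le 2t-1$ for the localizing entries to be available.
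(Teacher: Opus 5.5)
Your proposal is correct and takes essentially the same route as the paper, which gives no proof of its own but cites \cite{curto2005truncated} and the exposition in \cite{laurent2009sums}. That argument is the one you give: the flat extension theorem yields an $r$-atomic representing measure for $\b{\xi}|_{2t}$, and Lagrange interpolating polynomials of degree at most $t-1$, tested against the order-$t$ localizing matrices (principal submatrices of the order-$k$ ones) and the localizing vector, place every atom in $\Delta^n$. Your degree bookkeeping, in which every moment used has degree at most $2t$, is exactly what that argument requires.
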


Recall that our SDP constructions approximate the CP and COP cones from the opposite sides, which complicates direct bounding arguments. With the flat truncation condition, however, the semidefinite relaxation effectively reduces to a one-sided approximation. The following result formalizes this observation.

\begin{proposition}\label{prop:ft}
For the pair of degrees $(k_1,k_2)$, let $(X,\b{z},Z,\b{\xi})^{(k_1,k_2)}$ be the maximizer of (\ref{eq:lp_max_relx}) and let $(Y,\b{w},W,\b{\nu})^{(k_1,k_2)}$ be the minimizer of (\ref{eq:lp_min_relx}).
\begin{enumerate}
\item  If $\b{\xi}^{(k_1,k_2)}$ satisfies the flat truncation, i.e., there exists $t\le k_1$ such that \be\label{eq:ft_max} r_1:=\rank\, M_{t}[\b{\xi}^{(k_1,k_2)}]=\rank\, M_{t-1}[\b{\xi}^{(k_1,k_2)}],\ee  
{then $X^{(k_1,k_2)}\succcurlyeq_{cp} 0$} and $\mcal{W}_{\max}^{(k_1,k_2)} \le \mcal{W}_{\max}^*$;
\item  If $\b{\nu}^{(k_1,k_2)}$ satisfies the flat truncation, i.e., there exists $s\le k_2$ such that \be\label{eq:ft_min} r_2:=\rank\, M_{s}[\b{\nu}^{(k_1,k_2)}]=\rank\, M_{s-1}[\b{\nu}^{(k_1,k_2)}],
\ee 
{then $Y^{(k_1,k_2)}\succcurlyeq_{cp} 0$} and $\mcal{W}_{\min}^{(k_1,k_2)} \ge \mcal{W}_{\min}^*$.
\end{enumerate}
\end{proposition}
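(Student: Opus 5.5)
The plan is to combine Proposition~\ref{prop:flat_truncation} with the inner-approximation property of the SOS cone from Lemma~\ref{lem:iqandp}. The aim is to show that the relaxed optimizer is feasible for the original COP-CP program (\ref{eq:lp_max}); the bound on the optimal value then follows at once. We treat part (1) in detail. Part (2) is the mirror image, with the roles of $(X,\xi,Z)$ and $(Y,\nu,W)$ exchanged and max replaced by min.

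First, the optimizer satisfies $\b{\xi}^{(k_1,k_2)}\in\mathscr{S}[\Dt^n]_{2k_1}$, and the flat truncation condition (\ref{eq:ft_max}) holds for some $t\le k_1$. Proposition~\ref{prop:flat_truncation} therefore applies and gives
\[
\b{\xi}^{(k_1,k_2)}|_2^{\hom}=\sum_{i=1}^{r_1}\lambda_i[\b{x}^{(i)}]_2^{\hom},\qquad \b{x}^{(i)}\in\Dt^n,\ \lambda_i\ge0,
\]
so that $\b{\xi}^{(k_1,k_2)}|_2^{\hom}\in\mmc{R}[\Dt^n]^{\hom}_2$. The relaxation imposes $\varphi_n(X^{(k_1,k_2)})=\b{\xi}^{(k_1,k_2)}|_2^{\hom}$. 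Since $\varphi_n$ is a linear isomorphism with $\varphi_n(\b{x}\b{x}^{\tp})=[\b{x}]_2^{\hom}$, we obtain
\[
X^{(k_1,k_2)}=\sum_{i}\lambda_i\b{x}^{(i)}{\b{x}^{(i)}}^{\tp}.
\]
Any terms with $\lambda_i=0$ are dropped, which matches the definition requiring positive scalars; the case $X=0$ is trivially CP. Hence $X^{(k_1,k_2)}\succcurlyeq_{cp}0$.

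Second, the COP side. The relaxation gives $\psi_m(Z^{(k_1,k_2)})\in\iq[\Dt^m]_{2k_2}$. By Lemma~\ref{lem:iqandp}(1) this set lies in $\mmc{P}[\Dt^m]^{\hom}_2$, so $Z^{(k_1,k_2)}\succcurlyeq_{co}0$. One small point to address here is that $\iq$ contains nonhomogeneous polynomials. Since $\psi_m(Z)$ is a quadratic form, membership in $\iq$ only certifies nonnegativity on $\Dt^m$, and that is exactly copositivity under the simplex-based definition.

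Third, the remaining constraints $\lip A_i,X\rip=b_i$ and $\mcal{Q}(X)-\sum_j z_jB_j=Z$ are shared verbatim by (\ref{eq:lp_max_relx}) and (\ref{eq:lp_max}). The triple $(X,\b z,Z)^{(k_1,k_2)}$ is therefore feasible for (\ref{eq:lp_max}), with the same objective value $\b c^{\tp}\b z^{(k_1,k_2)}$, so $\mcal{W}^{(k_1,k_2)}_{\max}\le\mcal{W}^*_{\max}$.

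For part (2), flat truncation of $\b\nu^{(k_1,k_2)}$ likewise gives $Y^{(k_1,k_2)}\succcurlyeq_{cp}0$, and $\psi_n(W)\in\iq[\Dt^n]_{2k_1}$ gives $W\succcurlyeq_{co}0$. Thus the minimizer is feasible for (\ref{eq:lp_min}), which yields $\mcal{W}^{(k_1,k_2)}_{\min}\ge\mcal{W}^*_{\min}$.

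The only step needing real care is the identification between the htms decomposition and the CP decomposition through $\varphi_n$. This relies on $\varphi_n$ being well defined and injective, which the paper asserts. Everything else is a direct feasibility transfer.
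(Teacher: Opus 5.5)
Your proposal is correct and follows the paper's proof essentially step for step. Proposition~\ref{prop:flat_truncation} together with the isomorphism $\varphi_n$ gives $X^{(k_1,k_2)}\succcurlyeq_{cp}0$, Lemma~\ref{lem:iqandp}(1) gives copositivity of $Z^{(k_1,k_2)}$, and feasibility of $(X,\b{z},Z)^{(k_1,k_2)}$ in \eqref{eq:lp_max} yields the bound, with part (2) symmetric; your extra remarks (dropping zero weights, the case $X=0$, and that membership of the quadratic form $\psi_m(Z)$ in $\iq[\Delta^m]_{2k_2}$ only certifies nonnegativity on $\Delta^m$) are harmless refinements of the same argument.
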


By Proposition \ref{prop:ft}, once flat truncation is in place, the {semidefinite relaxation} hierarchy becomes one-sided: each higher level will improve the bound. Therefore, closing the remaining gap reduces to ensuring strong duality for the SDP pair as stated in Theorem \ref{thm:alleq}. 

\begin{theorem}\label{thm:alleq}
{For the pair of degrees $(k_1,k_2)$, let $(X,\b{z},Z,\b{\xi})^{(k_1,k_2)}$ be the maximizer of (\ref{eq:lp_max_relx}) and let $(Y,\b{w},W,\b{\nu})^{(k_1,k_2)}$ be the minimizer of (\ref{eq:lp_min_relx}).}
If both (\ref{eq:ft_max}) and (\ref{eq:ft_min}) hold, and there is no duality gap between the SDP duality pairs (\ref{eq:lp_max_relx})-(\ref{eq:lp_min_relx}), i.e., $\mcal{W}_{\max}^{(k_1,k_2)} = \mcal{W}_{\min}^{(k_1,k_2)}$,
then \be\label{eq:all_eq} \mcal{W}_{\max}^{(k_1,k_2)} = \mcal{W}_{\max}^*  =\mcal{Z}^*_{\max\min} =\mcal{Z}^*_{\min\max} = \mcal{W}_{\min}^* = \mcal{W}_{\min}^{(k_1,k_2)},\ee
and $(X^{(k_1,k_2)},Y^{(k_1,k_2)})$ solves the max-min problem (\ref{eqn:conic}) and the min-max problem (\ref{eqn:conic_minmax}). 
\end{theorem}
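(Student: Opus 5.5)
The plan is a sandwich argument. We chain the inequalities already available and use the assumed zero SDP duality gap to collapse them into equalities. First, apply Proposition~\ref{prop:ft}. Since (\ref{eq:ft_max}) holds at the maximizer $\b{\xi}^{(k_1,k_2)}$, part (1) gives $X^{(k_1,k_2)}\succcurlyeq_{cp}0$ and $\mcal{W}_{\max}^{(k_1,k_2)}\le \mcal{W}_{\max}^*$. Since (\ref{eq:ft_min}) holds at $\b{\nu}^{(k_1,k_2)}$, part (2) gives $Y^{(k_1,k_2)}\succcurlyeq_{cp}0$ and $\mcal{W}_{\min}^*\le \mcal{W}_{\min}^{(k_1,k_2)}$. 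Combining these with Proposition~\ref{prop:maxmin_duality} yields
\[
\mcal{W}_{\max}^{(k_1,k_2)}\le \mcal{W}_{\max}^*\le \mcal{Z}^*_{\max\min}\le \mcal{Z}^*_{\min\max}\le \mcal{W}_{\min}^*\le \mcal{W}_{\min}^{(k_1,k_2)}.
\]
The hypothesis $\mcal{W}_{\max}^{(k_1,k_2)}=\mcal{W}_{\min}^{(k_1,k_2)}$ then forces every inequality to be an equality, which is (\ref{eq:all_eq}). No strict copositivity (Assumption~\ref{as:co}) is needed, because the sandwich closes on its own.

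For the second claim, that $(X^{(k_1,k_2)},Y^{(k_1,k_2)})$ solves (\ref{eqn:conic}) and (\ref{eqn:conic_minmax}), I would check feasibility and then value. Feasibility: $X^{(k_1,k_2)}$ satisfies $\lip A_i,X\rip=b_i$ by construction of (\ref{eq:lp_max_relx}) and is CP by the above, so $X^{(k_1,k_2)}\in\mcal{K}^x_{cp}$. Likewise $Y^{(k_1,k_2)}\in\mcal{K}^y_{cp}$. Value: for any $Y\in\mcal{K}^y_{cp}$, we have $Z^{(k_1,k_2)}=\mcal{Q}(X^{(k_1,k_2)})-\sum_j z_j^{(k_1,k_2)}B_j$ with $\psi_m(Z^{(k_1,k_2)})\in\iq[\Delta^m]_{2k_2}\subseteq\mmc{P}[\Delta^m]^{\hom}_2$ by Lemma~\ref{lem:iqandp}, so $Z^{(k_1,k_2)}$ is genuinely copositive. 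Hence
\[
\lip \mcal{Q}(X^{(k_1,k_2)}),Y\rip=\b{c}^{\tp}\b{z}^{(k_1,k_2)}+\lip Z^{(k_1,k_2)},Y\rip\ge \mcal{W}_{\max}^{(k_1,k_2)}.
\]
Taking the minimum over $Y$ shows that $X^{(k_1,k_2)}$ guarantees the value $\mcal{W}_{\max}^{(k_1,k_2)}=\mcal{Z}^*_{\max\min}$, so it is optimal for the outer maximization. Symmetrically, $W^{(k_1,k_2)}$ is copositive, so $\lip \mcal{Q}(X),Y^{(k_1,k_2)}\rip\le \b{b}^{\tp}\b{w}^{(k_1,k_2)}=\mcal{Z}^*_{\min\max}$ for all $X\in\mcal{K}^x_{cp}$, and $Y^{(k_1,k_2)}$ is optimal for the min-max problem. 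Because the two values coincide, the pair is in fact a saddle point.

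The only delicate step is the use of Proposition~\ref{prop:ft}, specifically that the relaxation's COP-side variable is truly copositive, via the inner approximation of Lemma~\ref{lem:iqandp}, while the CP-side variable becomes truly CP only under flat truncation. I take this as given from that proposition. The rest is bookkeeping with the inequality chain.
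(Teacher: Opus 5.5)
Your proof is correct. The first half is exactly the paper's argument: the chain from Proposition~\ref{prop:ft} and Proposition~\ref{prop:maxmin_duality}, collapsed by the zero SDP gap.

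For the second claim you take a somewhat different route from the paper. You check directly that $X^{(k_1,k_2)}$ guarantees at least the common value against every $Y\in\mcal{K}^y_{cp}$, and that $Y^{(k_1,k_2)}$ guarantees at most that value against every $X\in\mcal{K}^x_{cp}$. This uses only that $Z^{(k_1,k_2)}$ and $W^{(k_1,k_2)}$ are genuinely copositive and the opponents' matrices are CP; you then read off the saddle point.

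The paper instead combines Lemma~\ref{lm:objdiff}, the zero SDP gap, and the duality $\mathscr{S}[\Dt^n]_{2k}=(\iq[\Delta^n]_{2k})^{\star}$. From these it obtains the complementarity $\lip \varphi_n(X^{(k_1,k_2)}),\psi_n(W^{(k_1,k_2)})\rip=\lip \varphi_m(Y^{(k_1,k_2)}),\psi_m(Z^{(k_1,k_2)})\rip=0$. It then deduces $\lip \mcal{Q}(X^{(k_1,k_2)}),Y^{(k_1,k_2)}\rip=\mcal{W}^{(k_1,k_2)}_{\max}$, and invokes the inner CP/COP dual pair at $X^{(k_1,k_2)}$ to conclude that $Y^{(k_1,k_2)}$ is an optimal inner response.

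Both arguments rest on the same weak-duality inequality. Yours is more elementary, since it needs neither Lemma~\ref{lm:objdiff} nor the moment/SOS pairing. It also gets the complementarity $\lip Z^{(k_1,k_2)},Y^{(k_1,k_2)}\rip=0$ as a byproduct, and it delivers the saddle-point property for the max-min and min-max problems at once. The paper treats the max-min case explicitly and leaves the min-max case as ``similar.''
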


We comment that duality requirements between SDP pairs are mild and typically satisfied in practice; indeed they are generally assumed in related literature that employs analogous relaxations (e.g., \cite{laraki2012semidefinite}). We therefore focus on conditions that guarantee the flat truncation itself.

\begin{assumption}\label{as:inIQ}
There exist a maximizer $(X,z,Z)^*$ for (\ref{eq:lp_max}) and a minimizer $(Y,w,W)^*$ for (\ref{eq:lp_min}) such that $p_{Z^*}\coloneqq \psi(Z^*) \in \iq[\Delta^m]$, $p_{W^*}\coloneqq \psi(W^*) \in \iq[\Delta^n]$,
and both optimization problems
\begin{align}
& \min_{\bx\in\re^n} \ p_{W^*}(\bx)\qquad \st \quad  \bx\in\Delta^n, \label{eq:minPsi}\\
& \min_{\by\in\re^m} \ p_{Z^*}(\by)\qquad \st \quad  \by\in\Delta^m, \label{eq:minPhi}
\end{align}
have finitely many critical points on which their objective values equal $0$.
\end{assumption}

{The assumption leverages from Assumption~2.1 in \cite{nie2013certifying}, under which Lasserre type Moment-SOS hierarchy has finite convergence if and only if the flat truncation condition holds for some relaxation order; see \cite[Theorem~2.2]{nie2013certifying}.
We remark that at the optimal solutions $(X,z,Z)^*$ and $(Y,w,W)^*$, $p_{W^*}$ and $p_{Z^*}$ are nonnegative on $\Delta^n$ and $\Delta^m$.
If a polynomial is nonnegative on the simplex set, then it belongs to the sum of ideal and quadratic module and has finitely many critical points when some {\it generic} conditions (we say a condition is generic if it holds for all input data except for that in a set of Lebesgue measure zero) hold; see \cite[Theorem~1.2]{nie2014optimality}.
In the following, we show that the flat truncation holds when the relaxation orders are big enough, under Assumption~\ref{as:inIQ}.}

\begin{theorem}\label{tm:finiteconv}
Suppose that $\mcal{W}^{^*}_{\max} = \mcal{W}^{^*}_{\min}$ and Assumption~\ref{as:inIQ} hold.
The flat truncation conditions (\ref{eq:ft_max})-(\ref{eq:ft_min}) hold at all optimal solutions of (\ref{eq:lp_max_relx})-(\ref{eq:lp_min_relx}) of relaxation order $(k_1,k_2)$ that are big enough.%, and
\end{theorem}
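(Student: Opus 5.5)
We reduce the statement to Nie's certificate result for polynomial optimization, \cite[Theorem~2.2]{nie2013certifying}. The key point is that optimal solutions of the relaxations at high order behave like optimal moment vectors of the Lasserre hierarchy applied to (\ref{eq:minPsi}) and (\ref{eq:minPhi}). We treat the max side (\ref{eq:ft_max}); the min side is symmetric, with the roles of $(\b{\xi},W^*)$ and $(\b{\nu},Z^*)$ exchanged.

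\textbf{Step 1 (exactness of the relaxation).} By Assumption~\ref{as:inIQ}, $p_{W^*}\in\iq[\Delta^n]_{2k_1}$ and $p_{Z^*}\in\iq[\Delta^m]_{2k_2}$ for all $k_1,k_2$ beyond some $k^0$. Hence $(Y,\b{w},W)^*$, completed by $\b{\nu}=[\cdot]$ moments of the CP decomposition of $Y^*$, is feasible for (\ref{eq:lp_min_relx}). Likewise $(X,\b{z},Z)^*$ with a representing-measure tms $\b{\xi}$ (Lemma~\ref{lem:rands}) is feasible for (\ref{eq:lp_max_relx}). Therefore
\[
\mcal{W}^{(k_1,k_2)}_{\max}\ge \mcal{W}^*_{\max}=\mcal{W}^*_{\min}\ge \mcal{W}^{(k_1,k_2)}_{\min}.
\]
Combined with weak duality $\mcal{W}^{(k_1,k_2)}_{\max}\le\mcal{W}^{(k_1,k_2)}_{\min}$, all four quantities are equal, and the SDP pair has zero gap.

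\textbf{Step 2 (complementarity).} Take any optimal $(X,\b{z},Z,\b{\xi})^{(k_1,k_2)}$. The feasible point $(Y,\b{w},W)^*$ of the dual SDP is then also dual optimal. Expanding the zero gap $\b{c}^{\tp}\b{z}^{(k_1,k_2)}=\b{b}^{\tp}\b{w}^*$ with the constraints gives
\[
\b{b}^{\tp}\b{w}^*-\b{c}^{\tp}\b{z}=\langle W^*,X\rangle+\langle Z,Y^*\rangle.
\]
Each inner product is nonnegative: the first since $\b{\xi}\in\mathscr{S}[\Dt^n]_{2k_1}=(\iq[\Delta^n]_{2k_1})^\star$ and $p_{W^*}\in\iq$, the second since $Y^*$ is CP and $Z$ is COP via Lemma~\ref{lem:iqandp}. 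Both therefore vanish, so $\mmc{L}_{\b{\xi}}(p_{W^*})=0$ with $\b{\xi}\in\mathscr{S}[\Dt^n]_{2k_1}$.

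\textbf{Step 3 (reading $\b{\xi}$ as a Lasserre optimal moment vector).} Consider problem (\ref{eq:minPsi}), whose optimal value is $0$: it is $\ge0$ since $W^*$ is COP and attained by complementarity with a CP $X^*$, assuming $X^*\neq0$. Its order-$k_1$ moment relaxation has the feasible set $\mathscr{S}[\Dt^n]_{2k_1}$ with the normalization $\xi_0=1$ (dehomogenized by $\b{e}^{\tp}\b{x}=1$). Its SOS dual contains $p_{W^*}-0\in\iq[\Delta^n]_{2k_1}$, so the relaxation is exact with value $0$. After scaling by $\xi_0>0$, $\b{\xi}$ is a minimizer of this moment relaxation. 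The degenerate case $\xi_0=0$ forces $\b{\xi}=0$ by the localizing constraint for $1-\b{x}^{\tp}\b{x}$, and then flatness is trivial. Assumption~\ref{as:inIQ} gives exactly the hypotheses of \cite[Assumption~2.1]{nie2013certifying}: membership in $\iq$ and finitely many critical points with value $0$. Invoking \cite[Theorem~2.2]{nie2013certifying}, every minimizer of the relaxation at sufficiently large order satisfies $\rank M_t=\rank M_{t-1}$ for some $t\le k_1$, which is (\ref{eq:ft_max}).

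\textbf{Main obstacle.} The main difficulty is uniformity in Step 3. The cited theorem concerns minimizers of a single fixed polynomial hierarchy, whereas our $\b{\xi}^{(k_1,k_2)}$ comes from a coupled problem indexed by two orders. We must confirm that the order threshold depends only on $p_{W^*}$ (i.e., on $k_1$), independent of $k_2$ and of which optimal solution is chosen. Step 2 ensures this, since every optimal $\b{\xi}$ is a minimizer for the same $p_{W^*}$.

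A second point requires care. Nie's result is stated for the standard Lasserre relaxation with truncation degree $d$, and flatness may require $t$ between the degree of the constraints and $k_1$. We would check that our set $\mathscr{S}$ matches that relaxation, including the redundant ball constraint, which guarantees Archimedeanity. If it does not, we would reprove the key step directly: for large order, the truncations of optimal $\b{\xi}$ are supported on the finite zero set of $p_{W^*}$ on $\Delta^n$, which then yields rank stabilization.
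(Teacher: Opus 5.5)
Your proposal is correct and follows essentially the same route as the paper. You use $\mathcal{W}^*_{\max}=\mathcal{W}^*_{\min}$, the feasibility of the optimal COP-CP solutions in the relaxations (via Assumption~\ref{as:inIQ}), and Lemma~\ref{lm:objdiff} to show that the normalized $\b{\xi}^{(k_1,k_2)}$ annihilates $p_{W^*}$, hence minimizes the tight moment relaxation of (\ref{eq:minPsi}), and then invoke \cite[Theorem~2.2]{nie2013certifying}. The only differences are cosmetic: you record the zero SDP duality gap explicitly first, and you handle $\xi_{\bf 0}=0$ through the localizing matrix of $1-\b{x}^{\tp}\b{x}$ instead of the kernel lemma from \cite{laurent2009sums}.
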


By Theorem~\ref{tm:finiteconv}, once $\mcal{W}^{^*}_{\max} = \mcal{W}^{^*}_{\min}$ holds and Assumption~\ref{as:inIQ} is satisfied, flat truncation is guaranteed for every optimal pair $(\xi^{(k_1,k_2)}, \nu^{(k_1,k_2)})$ when $k_1$ and $k_2$ are sufficiently large. 
That is, it provides sufficient conditions for flat truncation conditions for all optimal solutions of all semidefinite relaxations with relaxation orders large enough. 
In these cases, if in addition there is no duality gap between the SDP duality pairs (\ref{eq:lp_max_relx})-(\ref{eq:lp_min_relx}),
then we have (\ref{eq:all_eq}) holds.
We remark that in practice, one usually needs the flat truncations hold for one pair of $(k_1,k_2)$ of relaxation orders to solve the COP-CP problem. Nevertheless, this theorem provides stronger results showing that \emph{all} higher‑order relaxations inherit this property.
In addition, the requirement that $\mcal{W}^{^*}_{\max} = \mcal{W}^{^*}_{\min}$ is often easy to verify for specific contexts. 
For instance in bi-quadratic game, 
item~1 of Lemma \ref{lem:gameNE} shows that $\mcal{W}^{^*}_{\max} = \mcal{W}^{^*}_{\min}$ holds whenever the strategy sets $\mcal X$ and $\mcal Y$ are compact.

It is also worth emphasizing that once flat truncation holds for $\b{\xi}$, one can get the decomposition as in (\ref{eq:R2hom}) by the method in \cite{henrion2005detecting}, using Cholesky decomposition. 
The analogous also holds when $\b{\nu}$ satisfies the flat truncation condition. In the COP-CP context, this is particularly valuable: because the CP cone is isomorphic to the cone $\mcal{R}[\Delta^n]_2^{\hom}$, the factors extracted from the moment matrix give a rank-one decomposition of the optimal CP matrices.  
For example, for the bi-quadratic game, given Lemma \ref{lem:gameNE}, the decomposition directly outputs the pure strategies which are included in the support of the equilibrium mixed strategies, as well as the probabilities of using those pure strategies. This would provide structural insights in addition to the game value. We will demonstrate this in the numerical studies; see Sections~\ref{sc:bi_cyc_blo_game} and \ref{sc:cyc_blo_game}.

\begin{remark}
For the zero-sum polynomial game (\ref{eq:zero_poly}), let $d$ be the degree of $f$ and let $f_{\alpha,\beta}$ be the coefficients such $f(\bx,\by) = \sum_{|\alpha| + |\beta|\le d} f_{\alpha,\beta} \bx^{\alpha}\by^{\beta}$.
By considering the dual of the inner minimization problem, we get the following linear maximization problem for (\ref{eq:zero_poly}):
\be\label{eq:zero_poly_relx} \begin{array}{cl}
\displaystyle\max_{\upsilon,\gamma}  &\quad  \gamma  \\
\st & \upsilon_{1,{\bf0}} = 1,\quad \upsilon_1\in \mathcal{R}[K_1]_d, \quad \displaystyle\sum_{|\alpha| + |\beta|\le d} f_{\alpha,\beta} \upsilon_{1,\alpha} \by^{\beta} - \gamma \in \mathcal{P}[K_2]_d,
\end{array}\ee
where $\mathcal{R}[K_1]_d \;:=\;
\bigl\{
\lambda_1 [\b{x}^{(1)}]_d
+\cdots+ 
\lambda_r [\b{x}^{(r)}]_d
\;\big|\; 
r \in \mathbb{N}_+,\ 
\lambda_i > 0,\ 
\b{x}^{(i)} \in K_1
\bigr\},$ and $\mathcal{P}[K_2]_d$ is the set of all polynomials nonnegative on $K_2$ and with degrees not greater than $d$.
This problem is similar to the linear maximization problem \eqref{eq:lp_max_pop}.
Also, a semidefinite relaxation similar to (\ref{eq:lp_max_relx}) is proposed and the flat truncation condition is applied to certify its tightness in \cite{laraki2012semidefinite}.
We would like to remark that in (\ref{eq:zero_poly}), if we fix either $\bx$ or $\by$, then this zero-sum polynomial game reduces to a polynomial optimization problem.
In contrast, since we consider bilinear maximin problems of CP matrices, the resulting linear maximization problem can be viewed as a composition of the {\it generalized moment problem} and its dual, which are generalizations of polynomial optimization problems (see \cite{lasserre2024moment,nie2015linear,huang2024finite}).
Moreover, though \cite{laraki2012semidefinite} also exploits the flat truncation condition to certify the tightness, the theoretical guarantee for the tightness of the semidefinite relaxation when $(k_1,k_2)$ are big enough is first shown in Theorem~\ref{tm:finiteconv}.
\end{remark}

\section{Numerical Studies} \label{sec:numerical}

In this section, we apply the semidefinite relaxation to representative COP–CP instances to demonstrate the effectiveness of the approach. We begin with a toy example that illustrates implementation details, and then proceed to a biquadratic game application to provide deeper numerical evidence and insights. The numerical experiments are carried out on a MacBook Pro with 12$\times$Apple M3 Pro CPU and 18 GB of RAM. All optimization problems are implemented in MATLAB 2023b and solved using the Mosek solver (version 10.2.10). 

\subsection{Toy Example}

To demonstrate the implementation of the semidefinite relaxation with flat truncation, we present the following toy example. 
\begin{example}
\label{ex:toy}
Let $n = m = 3$, and let $\mcal{Q}$ be the bilinear operator such that 
\[ \mcal{Q}(X) = \begin{bmatrix}
X_{11}-X_{21}-4X_{31}+X_{22} & X_{21}+\frac{1}{2}X_{31}+X_{33} & \frac{1}{2}(X_{11}-X_{21}-X_{32}) \\
X_{21}+\frac{1}{2}X_{31}+X_{33} & -X_{11}-X_{22}+2X_{32} & -\frac{1}{2}(X_{31}+X_{22}+X_{33}) \\
\frac{1}{2}(X_{11}-X_{21}-X_{32}) & -\frac{1}{2}(X_{31}+X_{22}+X_{33}) & X_{31}+X_{32}+2X_{33}
\end{bmatrix}. \]
Moreover, let
\[A_1 = I_3,\quad A_2 = 
\begin{bmatrix}
1\, & 1\, & 0\, \\
1\, & 1\, & 1\, \\
0\, & 1\, & 1\,
\end{bmatrix}, \quad 
A_3 = 
\left[
\begin{array}{rrr}
1 & 2 & -1 \\
2 & -1 & -1 \\
-1 & -1 & 1
\end{array}
\right], \quad 
A_4 = 
\left[
\begin{array}{rrr}
1 & 2 & -3 \\
2 & -3 & 2 \\
-3 & 2 & 1
\end{array}
\right],
\]
\[B_1 = I_3,\quad 
B_2 = 
\left[
\begin{array}{rrr}
0 & 2 & 0 \\
2 & 0 & 2 \\
0 & 2 & 0
\end{array}
\right], \quad 
B_3 = 
\left[
\begin{array}{rrr}
0 & 0 & -1 \\
0 & -1 & 0 \\
-1 & 0 & 0
\end{array}
\right], 
\]
\[\b{b} = 
\left[
\begin{array}{rrr}
1, \ 2, \ 0, \ 1
\end{array}
\right]^{\tp},\quad \b{c} = 
\left[
\begin{array}{rrr}
3, \  1, \ -2
\end{array}
\right]^{\tp}.\]
Consider the max-min bilinear optimization problem (\ref{eqn:conic}).
In Table~\ref{tab:toy_bound}, we report optimal values $\mcal{W}_{\max}^{(k_1,k_2)}$ and $\mcal{W}_{\min}^{(k_1,k_2)}$ of semidefinite relaxations (\ref{eq:lp_max_relx}) and (\ref{eq:lp_min_relx}) and the time consumption for some typical $(k_1,k_2)$.
\begin{table}[h]
\centering
\caption{Numerical results of semidefinite relaxations for solving Example~\ref{ex:toy}.}
\label{tab:toy_bound}
\begin{tabular}{|c|c|c|c|c|c|}
\hline
$(k_1,k_2)$ & $\mcal{W}_{\max}^{(k_1,k_2)}$ & $\mcal{W}_{\min}^{(k_1,k_2)}$ & Solver Time (sec) & Flat Truncation?\\
\hline
$(1,1)$ & $-0.2120$ & $-0.2120$ & 0.0047 & N\\
% $(2,1)$ & $-0.2120$ & $-0.2120$ & 0.0049\\
% $(1,2)$ & $-0.5423$ & $-0.5423$ & 0.0055\\
$(2,2)$ & $-0.5423$ & $-0.5423$ & 0.0067 & N\\
% $(3,1)$ & $-0.2120$ & $-0.2120$ & 0.0077\\
% $(1,3)$ & $-0.5423$ & $-0.5423$ & 0.0215\\
$(3,2)$ & $-0.5423$ & $-0.5423$ & 0.0116 & Y\\
$(2,3)$ & $-0.5423$ & $-0.5423$ & 0.0269 & Y\\
$(3,3)$ & $-0.5423$ & $-0.5423$ & 0.0274 & Y\\\hline
\end{tabular}
\end{table}
Moreover, for the cases that the flat truncation conditions (\ref{eq:ft_max})-(\ref{eq:ft_min}) hold, one has $t = s = 2$ and $r_1 = r_2 = 2$, and it holds at the computed optimizers $X^*$, $Y^*$, $W^*$ and $Z^*$ that
\[ \begin{array}{rl}
X^*  = &
\left[
\begin{array}{rrr}
    0.1472  &  0.1318  &  0.0217 \\
    0.1318  &  0.3736  &  0.3682 \\
    0.0217  &  0.3682  &  0.4792 \\
\end{array}
\right] \\
 = & 1.5613 \left[
\begin{array}{rrr}
    0.0224 \\
    0.4236 \\
    0.5540 \\
\end{array}
\right]\left[
\begin{array}{rrr}
    0.0224 \\
    0.4236 \\
    0.5540 \\
\end{array}
\right]^T
+ 
0.4820 \left[
\begin{array}{rrr}
    0.5512 \\
    0.4402 \\
    0.0087 \\
\end{array}
\right]\left[
\begin{array}{rrr}
    0.5512 \\
    0.4402 \\
    0.0087 \\
\end{array}
\right]^T,
\end{array}\]
\[ \begin{array}{rl} Y^*  = &
\left[
\begin{array}{rrr}
    1.1217  &   0.0000  &  0.0808\\
    0.0000  &   1.8384  &  0.2500\\
    0.0808  &   0.2500  &  0.0398
\end{array}
\right] \\
 = & 2.3724 \left[
\begin{array}{rrr}
    0.0000 \\
    0.8803 \\
    0.1197 \\
\end{array}
\right]\left[
\begin{array}{rrr}
    0.0000 \\
    0.8803 \\
    0.1197 \\
\end{array}
\right]^T
+ 
1.2891 \left[
\begin{array}{rrr}
    0.9328 \\
    0.0000 \\
    0.0672 \\
\end{array}
\right]\left[
\begin{array}{rrr}
    0.9328 \\
    0.0000 \\
    0.0672 \\
\end{array}
\right]^T,
\end{array}\]
\[ W^* = 
\left[
\begin{array}{rrr}
    3.4982  & -4.4441  &  3.2571 \\
   -4.4441  &  5.6459  & -4.1379 \\
    3.2571  & -4.1379  &  3.0326
\end{array}
\right]\succcurlyeq_{co} 0, \]
\[ Z^* = 
\left[
\begin{array}{rrr}
    0.0055  &  0.9160 &  -0.0757 \\ 
    0.9160  &  0.0194 &  -0.1430 \\
   -0.0757  & -0.1430 &   1.0514 \\
\end{array}
\right]\succcurlyeq_{co} 0. \]
Therefore, by Theorem~\ref{thm:alleq},
$\mcal{Z}^*_{\max\min} =\mcal{Z}^*_{\min\max} = -0.5423$ and the above $(X^*,Y^*)$ is a solution to (\ref{eqn:conic}) and (\ref{eqn:conic_minmax}).

\end{example}

\subsection{Binary Cyclic Blotto Game}
\label{sc:bi_cyc_blo_game}

In Section~\ref{sec:gameexmp}, we introduce the cyclic Blotto game, a variation of the classic Colonel Blotto game, as a representative example that can be naturally formulated as a bi-quadratic game. 
In this numerical study, we begin by examining a binary version of the cyclic Blotto game, where each player's decision at a given location is simplified to a binary choice, namely, \emph{whether or not to occupy that location}. 
This binary formulation significantly reduces the action space compared to the general game, thereby enabling sharper structural insights into the strategic interplay.

Specifically, consider $N$ locations arranged in a circle. 
In the first stage, both the defender and attacker simultaneously select subsets of locations to occupy, subject to respective cardinality budgets: $E_1$ for the defender and $E_2$ for the attacker, where $E_1 \le N$ and $E_2\le N$. 
That is, the defender and the attacker can only occupy at most $E_1$ and $E_2$ locations, respectively. 
At each location $i$. the payoff is determined as follows: if the defender occupies location 
$i$ but the attacker does not, the defender wins that location and earns a payoff of $1$, while the attacker receives a payoff of $-1$. 
If both or neither player occupies location $i$, then the outcome is a tie with zero payoff to both. 
However, if the attacker occupies a location without the defender's occupation, the defender may still neutralize the attack by redeploying support from the left-adjacent location $i-1$ (with $i-1 = N$ if $i = 1$) if defender occupies location $i-1$ and attacker does not. In this case, both players get zero payoff on location $i$.
In summary, if we denote by $s_{1,i} = 1$ (respectively, $s_{2,i} = 1$) that the defender (respectively, the attacker) occupies location $i$, and let it be zero otherwise, then 
% Defender's payoff on location $i$ is
\be\label{eq:payoff_i} \mbox{Defender's payoff on location $i$} = 
\left\{ \begin{array}{rl}
1, & \quad \mbox{ if }s_{1,i} = 1,\ s_{2,i} = 0;\\  
-1, & \quad \mbox{ if }s_{1,i-1} = s_{1,i} = 0,\ s_{2,i} = 1;\\
-1, & \quad \mbox{ if }s_{1,i-1} = s_{2,i-1} = s_{2,i}= 1;  s_{1,i} = 0;\\
0, & \quad \mbox{ otherwise. }
\end{array}
\right. \ee
The attacker's payoff is the negative of the defender's.
This support mechanism confers a structural advantage to the defender. 
 
Similar to the general cyclic Blotto game presented in Section~\ref{sec:gameexmp}, binary cyclic Blotto games can be formulate as biquadratic games.
For binary variables $s_{1,i}, s_{2,i}$ in the above,
let $\b{s}_1\coloneqq (s_{1,1}\ddd s_{1,N})$ and $\b{s}_2\coloneqq (s_{2,1}\ddd s_{2,N})$.
Then the feasible sets for the defender and attacker are
\[
\mcal{S}_1\coloneqq \left\{ \b{s}_1 \in \{0,1\}^{N}
\left|\ 
\begin{array}{lll}
s_{1,1} + \cdots + s_{1,N} \le E_1
\end{array}
\right.\right\},\]
\[ \mcal{S}_2\coloneqq \left\{ \b{s}_2 \in \{0,1\}^{N}
\left|\ 
\begin{array}{lll}
s_{2,1} + \cdots + s_{2,N} \le E_2
\end{array}
\right.\right\}.
\]
Moreover, for the pair of feasible strategies $(\b{s}_1,\b{s}_2)$, it follows from (\ref{eq:payoff_i}) that the total payoff for the defender equals (let $s_{i,0}\coloneqq s_{i,N}$)
\begin{equation}\label{eqn:payoffblotto01}
P(\b{s}_1,\b{s}_2) = 
\sum\limits_{i=1}^N \left(s_{1,i}(1-s_{2,i}) -
 (1-s_{1,i})(1-s_{1,i-1})s_{2,i}- (1-s_{1,i})s_{1,i-1}s_{2,i}s_{2,i-1} \right).
\end{equation}

For the remainder of this section, we explore a series of numerical experiments using semidefinite relaxations to evaluate the defender's optimal strategy under various configurations $(E_1, E_2, N)$. 
We exclude trivial regimes where $E_1 = N$ or $E_2 = N$, as the players would simply occupy all locations. 
Similarly, when $E_1 = 1$ or $E_2 = 1$, the optimal strategies become degenerate: randomly selecting a single location due to the game’s symmetry in this specific case. Hence, we focus on nontrivial cases where $1<E_1<N$ and $1< E_2<N$ to better reveal the nuanced strategic dynamics enabled by binary occupation and spatial redeployment. Moreover, given that the defender already has locational advantage over the attacker, we restrict attention to cases where attacker has more ``resources" than the defender, i.e., $E_2 > E_1$.

Table \ref{tab:binary_blotto} reports the instances solved for $N = 4$ and $5$ together with the corresponding semidefinite relaxation results. After solving each relaxation, we verify the flat truncation condition, which is satisfied for all cases at relaxation order $k_1 = k_2 = 2$. 
{As described in Section \ref{sec:relax}, when this condition holds, the semidefinite relaxation is tight, and a decomposition for CP matrices can be obtained, from which we explicitly recover the pure strategies in the support of each player's equilibrium mixed strategy and their associated probabilities.}
Specifically, for each game configuration $(E_1, E_2, N)$, Table~\ref{tab:binary_blotto} lists the defender's and attacker's pure strategies in the support, represented as sets of occupied locations, together with their probabilities. Because the locations are symmetric, all strategies in the support are played with equal probability, denoted by $p_x$ for the defender and $p_y$ for the attacker, each equal to the reciprocal of the number of supported pure strategies.

The rationale for first introducing the binary variant of the cyclic Blotto game lies in its ability to reveal clear and intuitive structural insights into the equilibrium. Before presenting the exact equilibrium solution, we formulate several conjectures regarding its structure.
In the absence of the defender's advantage of leveraging support from adjacent locations, the strategies of the defender and the attacker would be symmetric and independent across locations, except for the correlation induced by the total budget constraints. In such a case, each location is indistinguishable from the others, and players' optimal strategies would reduce to identically distributed decisions over locations, leading to a formulation equivalent to the classical Colonel Blotto game.
However, the added flexibility for the defender to receive support from the left-adjacent location alters the strategic landscape. This interdependence across locations implies that the spatial arrangement of occupied locations becomes consequential. Anticipating this, an attacker may seek to concentrate their attacks on adjacent locations, thereby reducing the effectiveness of the defender's support mechanism. In response, a rational defender would favor spreading out their occupied locations to maximize the value of adjacent support and counter the attacker's clustering strategy.

Our computational results align with this intuition. Consider the case where $N = 5$ and $E_1 = E_2 = 2$, meaning both players are allowed to occupy up to two locations. The mixed equilibrium strategy for the defender randomizes over five pure strategies that are pairwise symmetry through cyclic permutations: each selecting two locations that are maximally spaced apart on the cycle of five nodes. These strategies are: $(1, 3)$, $(2, 4)$, $(3, 5)$, $(4, 1)$, and $(5, 2)$, each selected with equal probability of $\frac{1}{5}$. For the attacker, the mixed equilibrium strategy also randomizes over five pure strategies, each selecting two adjacent (or nearly adjacent, accounting for the cycle) locations: $(1, 2)$, $(2, 3)$, $(3, 4)$, $(4, 5)$, and $(5, 1)$, again played with equal probability of $\frac{1}{5}$. These results highlight the fundamental asymmetry introduced by the defender's support mechanism and its influence on the spatial structure of equilibrium strategies. Other cases also follow the similar structure. Figure \ref{fig:numgame01} present game equilibrium of two examples $(N = 4, E_1 = E_2 = 2)$ and $(N = 5, E_1 = E_2 = 2)$.

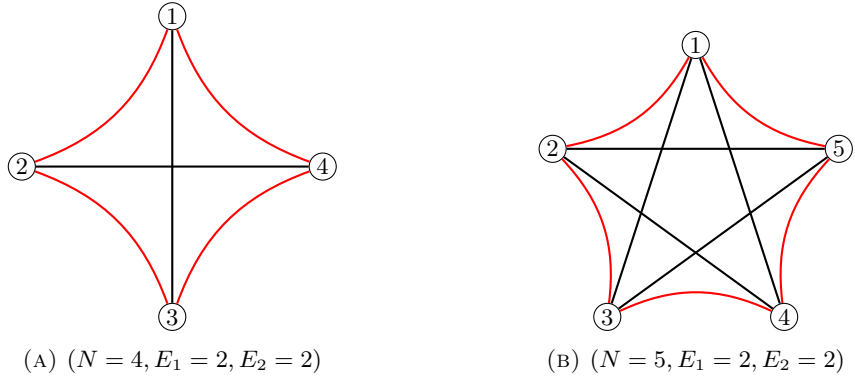
\begin{figure}[htbp]
\centering

% Subfigure (a): N = 4
\begin{subfigure}[b]{0.45\textwidth}
\centering
\begin{tikzpicture}[scale=2, every node/.style={circle, draw, inner sep=1pt, minimum size=8pt}, font=\small]

% Nodes in circle
\foreach \i in {1,2,3,4} {
  \node (\i) at ({90 + (\i - 1) * 90}:1) {\i};
}

% Defender: (1,3), (2,4)
\draw[black, thick] (1) -- (3);
\draw[black, thick] (2) -- (4);

% Attacker: (1,2), (2,3), (3,4), (4,1)
\draw[red, thick, bend left=25] (1) to (2);
\draw[red, thick, bend left=25] (2) to (3);
\draw[red, thick, bend left=25] (3) to (4);
\draw[red, thick, bend left=25] (4) to (1);

\end{tikzpicture}
\caption{$(N=4, E_1=2, E_2=2)$}
\end{subfigure}
\hfill
% Subfigure (b): N = 5
\begin{subfigure}[b]{0.45\textwidth}
\centering
\begin{tikzpicture}[scale=2, every node/.style={circle, draw, inner sep=1pt, minimum size=8pt}, font=\small]

% Nodes in circle
\foreach \i in {1,2,3,4,5} {
  \node (\i) at ({90 + (\i - 1) * 72}:1) {\i};
}

% Defender: (1,3), (2,4), (3,5), (4,1), (5,2)
\draw[black, thick] (1) -- (3);
\draw[black, thick] (2) -- (4);
\draw[black, thick] (3) -- (5);
\draw[black, thick] (4) -- (1);
\draw[black, thick] (5) -- (2);

% Attacker: (1,2), (2,3), (3,4), (4,5), (5,1)
\draw[red, thick, bend left=25] (1) to (2);
\draw[red, thick, bend left=25] (2) to (3);
\draw[red, thick, bend left=25] (3) to (4);
\draw[red, thick, bend left=25] (4) to (5);
\draw[red, thick, bend left=25] (5) to (1);

\end{tikzpicture}
\caption{$(N=5, E_1=2, E_2=2)$}
\end{subfigure}

\caption{Visualization of Defender (black straight lines) and Attacker (red curved lines) Strategies in Cyclic Blotto Game with $N=4$ and $N = 5$. Each numbered node represents a location arranged in a circle. Black straight lines connect the pairs of locations chosen in each pure strategy of the defender, representing maximally spaced occupation. Red curved lines connect adjacent location pairs chosen in each pure strategy of the attacker, representing concentrating force and minimize the defender's ability to redeploy support. All strategies are used with equal probability within each player's mixed equilibrium strategy.}
\label{fig:numgame01}
\end{figure}

\begin{table}
\small
  \centering
  \caption{Computation results for binary cyclic Blotto game with semidefinite relaxation.}
\label{tab:binary_blotto}
  \begin{tabular}{|c|c|c|c|c|c|c|}
    \hline
    $N$ & $(E_1,E_2)$ & $p_x$ & $\begin{array}{c}\mbox{Defender}\\ \mbox{Strategies}
    \end{array}$ & $p_y$ & $\begin{array}{c}\mbox{Attacker}\\ \mbox{Strategies}
    \end{array}$ & $\begin{array}{c}\mbox{Defender}\\ \mbox{Payoff}
    \end{array}$ \\ 
    \hline
    \multirow{4}{*}{4}
    & $(2,2)$ & $1/2$ & $(1,3), (2,4)$ & $1/4$ & $\begin{array}{c}(1,2), (1,4),\\ (2,3), (3,4)\end{array}$ & $3/2$ \\ \cline{2-7}
    & $(2,3)$ & $1/2$ & $(1,3), (2,4)$ & $1/4$ & $\begin{array}{c}(1,2,3), (1,2,4),\\(1,3,4), (2,3,4)\end{array}$ & $0$ \\ \cline{2-7}
    & $(3,2)$ & $1/4$ & $\begin{array}{c}(1,2,3), (1,2,4),\\(1,3,4), (2,3,4)\end{array}$ & $1/4$ & $\begin{array}{c}(1,2), (1,4),\\ (2,3), (3,4)\end{array}$ & $11/4$ \\ \cline{2-7}
    & $(3,3)$ & $1/4$ & $\begin{array}{c}(1,2,3), (1,2,4),\\(1,3,4), (2,3,4)\end{array}$ & $1/4$ & $\begin{array}{c}(1,2,3), (1,2,4),\\(1,3,4), (2,3,4)\end{array}$ & $1$ \\ 
    \hline\hline
    \multirow{9}{*}{5}
    & $(2,2)$ & $1/5$ & $\begin{array}{c}(1,3), (1,4), (2,4),\\ (2,5), (3,5) \end{array}$ & $1/5$ & $\begin{array}{c} (1,2), (1,5), (2,3),\\ (3,4), (4,5)\end{array}$ & $8/5$\\ \cline{2-7}
    & $(2,3)$ & $1/5$ & $\begin{array}{c}(1,3), (1,4), (2,4),\\ (2,5), (3,5)\end{array}$ & $1/5$ & $\begin{array}{c}(1,2,3), (1,2,5),\\(1,4,5), (2,3,4),\\ (3,4,5)\end{array}$ & $1/5$\\ \cline{2-7}
    & $(2,4)$ & $1/5$ & $\begin{array}{c} (1,3), (1,4), (2,4),\\ (2,5), (3,5)\end{array}$ & $1/5$ & $\begin{array}{c}(1,2,3,4), (1,2,3,5),\\(1,2,4,5), (1,3,4,5),\\ (2,3,4,5)\end{array}$ & $-6/5$\\ \cline{2-7}
    & $(3,2)$ & $1/5$ & $\begin{array}{c}(1,2,4), (1,3,4),\\(1,3,5), (2,3,5),\\ (2,4,5)\end{array}$ & $1/5$ & $\begin{array}{c}(1,2), (1,5), (2,3),\\ (3,4), (4,5)\end{array}$ & $16/5$\\ \cline{2-7}
    & $(3,3)$ & $1/5$ & $\begin{array}{c}(1,2,4), (1,3,4),\\(1,3,5), (2,3,5),\\ (2,4,5)\end{array}$ & $1/5$ & $\begin{array}{c}(1,2,3), (1,2,5),\\(1,4,5), (2,3,4),\\ (3,4,5)\end{array}$ & $8/5$ \\ \cline{2-7}
    & $(3,4)$ & $1/5$ & $\begin{array}{c}(1,2,4), (1,3,4),\\(1,3,5), (2,3,5),\\ (2,4,5)\end{array}$ & $1/5$ & $\begin{array}{c}(1,2,3,4), (1,2,3,5),\\(1,2,4,5), (1,3,4,5),\\ (2,3,4,5)\end{array}$ & $0$\\ \cline{2-7}
    & $(4,2)$ & $1/5$ & $\begin{array}{c}(1,2,3,4), (1,2,3,5),\\(1,2,4,5), (1,3,4,5),\\ (2,3,4,5)\end{array}$ & $1/5$ & $\begin{array}{c} (1,2), (1,5), (2,3),\\ (3,4), (4,5)\end{array}$ & $23/5$\\ \cline{2-7}
    & $(4,3)$ & $1/5$ & $\begin{array}{c}(1,2,3,4), (1,2,3,5),\\(1,2,4,5), (1,3,4,5),\\ (2,3,4,5)\end{array}$ & $1/5$ & $\begin{array}{c}(1,2,3), (1,2,5),\\(1,4,5), (2,3,4),\\ (3,4,5)\end{array}$ & $14/5$\\ \cline{2-7}
    & $(4,4)$ & $1/5$ & $\begin{array}{c}(1,2,3,4), (1,2,3,5),\\(1,2,4,5), (1,3,4,5),\\ (2,3,4,5)\end{array}$ & $1/5$ & $\begin{array}{c}(1,2,3,4), (1,2,3,5),\\(1,2,4,5), (1,3,4,5),\\ (2,3,4,5)\end{array}$ & 1\\ 
    \hline
  \end{tabular}
\end{table}

To evaluate the scalability of our proposed semidefinite relaxation, we measure solve times across different problem sizes $(N, E_1, E_2)$ for $N = 4 \ddd 10$. 
Each problem involves $2N$ binary variables that represent the decisions of the defender and attacker. For small to moderate $N \leq 5$, the relaxation can be solved efficiently in a few seconds. 
A summary of the size and runtime of the moment matrix for representative combinations of $(N, E_1, E_2)$ is provided in Table~\ref{tab:sdptime}.

\begin{table}[htbp]
\centering
\caption{Problem size and solver performance for various $(N, E_1, E_2)$ instances. SDP solved using the $(2,2)$-order moment relaxation. Moment matrix size is {$\binom{1+2N+k}{k}$}.}
\label{tab:sdptime}
\begin{tabular}{|c|c|c|c|c|}
\hline
$(N, E_1, E_2)$ & \#Binary Vars ($2N$) & Moment Matrix Size & Solver Time (sec)  \\
\hline
(4, 2, 2) & 8  & $55 \times 55$ &  2.02    \\
(5, 2, 2) & 10 & $78 \times 78$ &  6.76    \\
(5, 3, 2) & 10 & $78 \times 78$ &  6.24     \\
(6, 3, 3) & 12 & $105 \times 105$ & 22.89     \\
(7, 3, 4) & 14 & $136 \times 136$ & 87.18      \\
(8, 4, 4) & 16 & $171 \times 171$ & 275.21      \\
(9, 4, 5) & 18 & $210 \times 210$ & 761.89      \\
(10, 5, 5)& 20 & $253 \times 253$ & 2061.65     \\
\hline
\end{tabular}
\end{table}

\subsection{Cyclic Blotto Game}
\label{sc:cyc_blo_game}
In this subsection, we move on to solve the general cyclic Blotto game introduced in Section~\ref{sec:gameexmp}, where players allocate discrete units of resources across $N$ locations rather than making binary decisions. We solve the semidefinite relaxations of the cyclic Blotto game for small instances ($N = 3$ and $N = 4$) under various resource configurations. All instances presented satisfy the flat truncation condition, ensuring that the SDP solutions correspond to exact game equilibria. The results for $N = 3$ are reported in the E-Companion~\ref{sc:supnum}, while the detailed results for the case $(A = B = 3, N = 4)$ are shown in Table~\ref{tab:cyclic_blotto_four}. Specifically, in Table~\ref{tab:cyclic_blotto_four}, we report each player's equilibrium mixed strategy in terms of the pure strategies (allocation profiles) used and their corresponding probabilities. Each row corresponds to one pure strategy, described by the number of units allocated to each location.
\begin{table}[htbp]
  \centering
  \caption{Equilibrium strategies from semidefinite relaxation of the cyclic Blotto game with $N=4$, $E_1=E_2=3$.}
  \label{tab:cyclic_blotto_four}
  \begin{tabular}{|c|c|c|c|c|c||c|c|c|c|c|c|}
    \hline
    \multicolumn{6}{|c||}{Defender Strategies} & \multicolumn{6}{c|}{Attacker Strategies}\\ \hline
    $A$ & $p_x$ & Loc 1 & Loc 2 & Loc 3 & Loc 4 
        & $B$ & $p_y$ & Loc 1 & Loc 2 & Loc 3 & Loc 4 \\ \hline
    \multirow{8}{*}{3} 
     & 0.1 & 1 & 0 & 2 & 0 
     & \multirow{8}{*}{3} & 0.15 & 2 & 1 & 0 & 0 \\ \cline{2-6}\cline{8-12}
     & 0.1 & 2 & 0 & 1 & 0 
     & & 0.15 & 0 & 2 & 1 & 0 \\ \cline{2-6}\cline{8-12}
     & 0.1 & 0 & 2 & 0 & 1 
     & & 0.15 & 0 & 0 & 2 & 1 \\ \cline{2-6}\cline{8-12}
     & 0.1 & 0 & 1 & 0 & 2 
     & & 0.15 & 1 & 0 & 0 & 2 \\ \cline{2-6}\cline{8-12}
     & 0.15 & 1 & 1 & 0 & 1 
     & & 0.1 & 1 & 1 & 0 & 1 \\ \cline{2-6}\cline{8-12}
     & 0.15 & 1 & 0 & 1 & 1 
     & & 0.1 & 1 & 0 & 1 & 1 \\ \cline{2-6}\cline{8-12}
     & 0.15 & 1 & 1 & 1 & 0 
     & & 0.1 & 1 & 1 & 1 & 0 \\ \cline{2-6}\cline{8-12}
     & 0.15 & 0 & 1 & 1 & 1 
     & & 0.1 & 0 & 1 & 1 & 1 \\ \hline
     \multicolumn{12}{|c|}{Defender's Expected Payoff = 0.55} \\\hline
  \end{tabular}
  \vspace{-0.3cm}
\end{table}
The structural intuition developed from the binary variant continues to hold: the defender prefers to \emph{disperse} resources across locations to exploit the redeployment advantage, whereas the attacker aims to \emph{concentrate} forces locally to neutralize this benefit.
Take the case $E_1 = E_2 = 3$, $N = 4$ as an exposition.
The defender's mixed strategy randomizes over:
\begin{itemize}
    \item[(a)] four allocation profiles placing 2 units at one location and 1 unit at the diagonally opposite location (each with probability 0.1), and
    \item[(b)] four profiles allocating 1 unit to three out of the four locations (each with probability 0.15).
\end{itemize}
In contrast, the attacker's mixed strategy includes:
\begin{itemize}
    \item[(c)] four concentrated profiles with 2 units at one location and 1 unit at the \emph{right-adjacent} location (each with probability 0.15), and
    \item[(d)] four profiles mirroring type (b), allocating 1 unit at three locations (each with probability 0.1).
\end{itemize}

Note that allocation profile (c) of the attacker concentrates more resources than the defender's allocation profile (a) and also played with higher probability. 

Interestingly, if we directly adopt the  qualitative reasoning, it may suggest that the defender should only adopt type (a) and the attacker only type (c). 
A direct calculation restricting the strategy support to these forms yields a defender payoff of zero. 
To improve, the defender includes allocation profiles of type (b), prompting the attacker to counter by adopting profiles of type (d). Both players subsequently optimize over the mixing probabilities on these supports, converging to the equilibrium shown in Table~\ref{tab:cyclic_blotto_four}. The defender ultimately benefits from the redeployment flexibility, achieving an expected payoff of 0.55. This example demonstrates that even for small instances, the optimal strategies and payoffs are far from obvious and cannot be derived without computational tools. While structural insights are useful, solving the SDP remains essential for obtaining the exact equilibrium.

\section{Conclusions}

This paper introduces a general framework for solving a class of max–min bilinear optimization problems over CP  cones, which frequently arise in robust optimization, game theory, and beyond. Our key innovation is the COP–CP reformulation, which transforms the original nested optimization structure into a single-stage linear conic program over the product of COP and CP cones. This reformulation enables us to apply a systematic semidefinite relaxation approach, leveraging the moment–SOS hierarchy and the powerful flat truncation condition to certify global optimality and extract optimizers.

We provide both asymptotic and finite convergence guarantees for the proposed semidefinite relaxations and demonstrate their applicability on representative numerical examples. In particular, we highlight an application to a cyclic Colonel Blotto game, showing how the proposed method can exactly recover the equilibrium strategies and payoff when the flat truncation condition is satisfied.

There are several interesting directions for future research. One important avenue is the quantitative characterization of the relaxation gap. Although we provide conditions under which the semidefinite relaxation is tight, a deeper understanding of when the relaxation gap exists and how large it can be remains an open and important question.
Another direction pertains to scalability and computational efficiency. Our numerical study indicates that solving the semidefinite relaxation becomes increasingly time-consuming as the problem dimension grows, a common challenge for hierarchy-based relaxation methods. Since the cyclic Blotto game is used as a demonstration of the general method (rather than the primary focus of this work), we adopt standard solvers and do not design tailored algorithms for this specific instance. Future research may explore ways to exploit problem structure or develop decomposition techniques to improve scalability for large-scale applications.
Finally, extending the COP–CP reformulation to broader settings, such as problems involving tensor optimization or multi-agent equilibrium computation, may further expand the scope and impact of the proposed framework.

In summary, this paper contributes a new theoretical and algorithmic pathway for addressing max–min bilinear CP problems, with provable guarantees and practical relevance. It lays the groundwork for further advances in tractable formulations and scalable algorithms in nonconvex conic optimization.

\section{Acknowledgement}
The authors would like to thank Jean Bernard Lasserre and Jiawang Nie for fruitful discussions. 
The second author is partially supported by the Hong Kong Research Grants Council HKBU-15303423 and NSFC Young Scientists Fund, grant number 12301407.

\bibliographystyle{abbrv}
\bibliography{mybibfile.bib}

 \newpage

%%%%%%%%%%%%%%%%%%%%%%%%%%%%%%%%%%%%%%%%%%%%%%%%%%%%%%%%%%

% \begin{appendix}
% \appendixhead
\appendix

\section{Proofs of Statements for Section \ref{sec:formulation}}\label{sc:prfsc3}

\noindent{\bf{Proof of Proposition~\ref{prop:maxmin_duality}.}}
Firstly, it is straightforward to verify that \eqref{eq:lp_max} and \eqref{eq:lp_min} are {Lagrangian dual problems}, by interpreting $(Y, \b{w}, W)$ in \eqref{eq:lp_min} as the dual variables to the Problem \eqref{eq:lp_max}. 

Secondly, Since \eqref{eq:inner_min} and \eqref{eq:inner_min_dual} are a primal–dual pair, weak duality implies $\mcal{W}^*_{\max}\le \mcal{Z}^*_{\max\min}$ after combining with the outer maximization in \eqref{eqn:conic}. By an analogous weak-duality argument for \eqref{eqn:conic_minmax}, we obtain $\mcal{W}^*_{\min}\ge \mcal{Z}^*_{\min\max}$. Moreover, by the bilinear structure of the objective in the max–min and min–max problems \eqref{eqn:conic} and \eqref{eqn:conic_minmax}, we have $\mcal{Z}^*_{\max\min}\le \mcal{Z}^*_{\min\max}$. Hence, \[ \mcal{W}_{\max}^* \le \mcal{Z}^*_{\max\min} \le \mcal{Z}^*_{\min\max} \le \mcal{W}_{\min}^*.\]\qed

\vspace{1em}

\noindent{\bf{Proof of Lemma~\ref{lem:z=w}.}}
We first show $\mcal{Z}_{\max\min}^* = \mcal{W}_{\max}^*$ under Assumption \ref{as:co}(a) holds. The idea is to establish a strong duality between the inner minimization problem~(\ref{eq:inner_min}) and its dual problem \eqref{eq:inner_min_dual} by constructing a Slater point for the dual problem \eqref{eq:inner_min_dual}. Given $z_1\ddd z_J$ satisfying Assumption \ref{as:co}(a) (so $\sum_{j=1}^J z_j B_j \succ_{co} 0$), set $\hat z_j := -Kz_j$ and $Z := \mcal{Q}(X) - \sum_{j=1}^J \hat z_j B_j = \mcal{Q}(X) + K\sum_{j=1}^J z_j B_j$. For $K > 0 $ sufficiently large, $Z\succ_{co} 0$, and hence $(\hat z_1\ddd \hat z_J, Z)$ is a Slater point for \eqref{eq:inner_min_dual}. Strong duality therefore holds between \eqref{eq:inner_min} and \eqref{eq:inner_min_dual} for every $X$. Hence, we have $\mcal{Z}_{\max\min}^* = \mcal{W}_{\max}^*$. 

An entirely symmetric argument proves $\mcal{Z}_{\min\max}^* = \mcal{W}_{\min}^*$ under Assumption \ref{as:co}(b). Take $w_1\ddd w_I$ with $\sum_{i=1}^I w_i A_i \succ_{co} 0$, define $\hat w_i := Kw_i$ and $W := \sum_{i=1}^I \hat w_i A_i - \mcal{Q}^{\star}(Y) = K\sum_{i=1}^I w_i A_i - \mcal{Q}^{\star}(Y)$. For $K$ large enough, $W\succ_{co} 0$, and hence $(\hat w_1\ddd \hat w_I, W)$ is a Slater point for the dual COP of the inner maximization in \eqref{eqn:conic_minmax}, implying $\mcal{Z}_{\min\max}^* = \mcal{W}_{\min}^*$. 
\qed

\vspace{1em}

\noindent{\bf{Proof of Theorem~\ref{tm:bqg=cpg}.}}
{\bf Step 1.} We first show that for any pair of feasible pure strategies for (\ref{eq:cpgame}), we can construct a pair of feasible mixed strategies for the bi-quadratic game (\ref{eq:mix_bigame}). 

Let $(\b{p}, P)\in \mathcal C_{cp}^1$ and $(\b{q}, Q)\in \mathcal C_{cp}^2$.
By the definition of CP matrices, there exist two nonnegative integers $\bar k, \bar \ell$, scalars $\alpha_1\ddd \alpha_{\bar k}\in\re_+$, $\beta_1\ddd \beta_{\bar \ell}\in\re_+$, and vectors ${\b \zeta}_1\ddd {\b \zeta}_{\bar k}\in\re^{l_1}_+$, ${\b \eta}_1\ddd {\b \eta}_{\bar \ell}\in\re^{l_2}_+$ such that 
\begin{equation}\label{eq:decomp_1ppP}
 \left[\begin{array}{cc} 1 & {\b p}^\mathsf T\\ \b{p} & P\end{array}\right] = \sum_{k =1}^{\bar k} \left[\begin{array}{cc} \alpha_k \\ \b{\zeta}_k \end{array}\right]\left[\begin{array}{cc} \alpha_k ,\\ \b{\zeta}_k \end{array}\right]^\mathsf T,
\quad \alpha_1^2+\ldots+\alpha_{\bar k}^2 = 1,
\end{equation}
\begin{equation}\label{eq:decomp_1qqQ}
 \left[\begin{array}{cc} 1 & {\b q}^\mathsf T\\ \b{q} &Q\end{array}\right] = \sum_{\ell =1}^{\bar \ell} \left[\begin{array}{cc} \beta_{\ell} \\ \b{\eta}_{\ell} \end{array}\right]\left[\begin{array}{cc} \beta_{\ell} \\ \b{\eta}_{\ell} \end{array}\right]^\mathsf T,
 \quad \beta_1^2+\ldots+\beta_{\bar \ell}^2=1.
\end{equation}
Without loss of generality, we assume that $\alpha_k = 0$ if and only if $k\le k^{\circ}$ for a $k^{\circ} \le \bar k$; $\beta_{\ell} = 0$ if and only if ${\ell}\le \ell^{\circ}$ for an $\ell^{\circ} \le \bar \ell$, and for each one of the following classes, vectors within the class are pairwise distinct:
\[ (\zeta_1\ddd \zeta_{k^{\circ}}),\quad 
(\frac{\zeta_{k^{\circ}+1}}{\alpha_{k^{\circ}+1}}\ddd \frac{\zeta_{\bar k}}{\alpha_{\bar k}}),\quad
(\eta_1\ddd \eta_{\ell^{\circ}}),\quad 
(\frac{\eta_{\ell^{\circ}+1}}{\beta_{\ell^{\circ}+1}}\ddd \frac{\eta_{\bar \ell}}{\beta_{\bar \ell}}).\]
Then the decomposition above can be rewritten as
\begin{equation*}
 \left[\begin{array}{cc} 1 & {\b p}^\mathsf T\\ \b{p} & P\end{array}\right] =  \sum_{k =1}^{k^{\circ}} \left[\begin{array}{cc} 0 \\ \b{\zeta}_k \end{array}\right]\left[\begin{array}{cc} 0 \\ \b{\zeta}_k \end{array}\right]^\mathsf T +
 \sum_{k =k^{\circ}+1} ^{\bar k} \alpha_k^2 \left[\begin{array}{cc} 1 \\ \frac{\b{\zeta}_k}{\alpha_k} \end{array}\right]\left[\begin{array}{cc}1 \\ \frac{\b{\zeta}_k}{\alpha_k} \end{array}\right]^\mathsf T,
\end{equation*}
\begin{equation*}
 \left[\begin{array}{cc} 1 & {\b{q}}^\mathsf T\\ \b{q} & Q\end{array}\right] =  \sum_{\ell=1}^{\ell^{\circ}} \left[\begin{array}{cc} 0 \\ \b{\eta}_{\ell} \end{array}\right]\left[\begin{array}{cc} 0 \\ \b{\eta}_{\ell} \end{array}\right]^\mathsf T+ 
 \sum_{\ell =\ell^{\circ}+1}^{\bar \ell} \beta_{\ell}^2 \left[\begin{array}{cc} 1 \\ \frac{\b{\eta}_{\ell}}{\beta_{\ell}} \end{array}\right]\left[\begin{array}{cc} 1 \\ \frac{\b{\eta}_{\ell}}{\beta_{\ell}} \end{array}\right]^\mathsf T.
\end{equation*}
Then, it is clear that 
\be\label{eq:XYdecomp}
\begin{gathered}
\b{p} = \sum_{k=k^{\circ}+1}^{\bar k} \alpha_k\b{\zeta}_k,\quad 
\b{q} = \sum_{\ell=\ell^{\circ}+1}^{\bar \ell} \beta_{\ell}\b{\eta}_{\ell},\quad \\
P = \underbrace{\sum_{k=1}^{k^{\circ}} \b{\zeta}_k\b{\zeta}_k^{\tp}}_{{P^{\circ}}} + \underbrace{\sum_{k=k^{\circ}+1}^{\bar k} \b{\zeta}_i\b{\zeta}_k^{\tp}}_{P'},\quad 
Y = \underbrace{\sum_{\ell=1}^{\ell^{\circ}} \b{\eta}_{\ell}\b{\eta}_{\ell}^{\tp}}_{Q^{\circ}} + \underbrace{\sum_{\ell=\ell^{\circ}+1}^{\bar \ell} \b{\eta}_{\ell}\b{\eta}_{\ell}^{\tp}}_{Q'}.
\end{gathered}\ee

By Lemma~2.2 and Lemma~2.3 in \cite{burer2009copositive}, we have that 
\be \label{eq:from_burer} \begin{array}{llll}
G\b{\zeta}_k = 0, & \forall\, k = 1\ddd k^{\circ}, & 
H\b{\eta}_{\ell} = 0, &\forall\,\ell = 1\ddd \ell^{\circ},\\
\b{\zeta}_k/\alpha_k\in \mcal S_1, &\forall\, k = k^{\circ}+1\ddd \bar k, &  
\b{\eta}_{\ell}/\beta_{\ell}\in \mcal S_2, &\forall\, \ell = \ell^{\circ}+1\ddd \bar \ell. \end{array}\ee
Thus, for every $k = 1\ddd k^{\circ}$ and $\ell = 1\ddd \ell^{\circ}$ and for all $t\ge 0$, we have
\[ \begin{array}{lll}
\displaystyle G\left (\frac{\b{\zeta}_{k'}}{\alpha_{k'}} + t\b{\zeta}_k\right) = \b{g},&\displaystyle \frac{\b{\zeta}_{k'}}{\alpha_{k'}} + t\b{\zeta}_k\ge 0, & \forall k' = k^{\circ}+1\ddd\bar k, \\
\displaystyle H\left (\frac{\b{\eta}_{\ell'}}{\beta_{\ell'}} + t\b{\eta}_{\ell}\right) = \b{h}, & \displaystyle \frac{\b{\eta}_{\ell'}}{\beta_{\ell'}} + t\b{\eta}_{\ell} \ge 0,  & \forall \ell' = \ell^{\circ}+1\ddd\bar \ell.
\end{array} \]
Moreover, by Assumption~\ref{as:0to1}, $\frac{{\zeta}_{k',i}}{\alpha_{k'}} + t{\zeta}_{k, i}$ and $\frac{{\eta}_{\ell', j}}{\beta_{\ell'}} + t{\eta}_{\ell, j}$ contained in $[0,1]$ componentwise for every $i \in \mathbb{B}_1$ and $j \in \mathbb{B}_2$. 
This implies
\[ {\zeta}_{k, i} = {\eta}_{\ell, j} = 0,\quad \forall k\in[k^{\circ}],\ \ell\in[\ell^{\circ}],\ i\in \mathbb{B}_1,\ j\in \mathbb{B}_2. \]

Now we are ready to construct a feasible mixed strategy for the bi-quadratic game given  $(\b{p}, P)$, $(\b{q}, Q)$ and the properties established above for the rank-1 decompositions. 
Note that $\alpha_{k^{\circ}+1}^2+\ldots+\alpha_{\bar k}^2 = \beta_{\ell^{\circ}+1}^2+\ldots+\beta_{\bar \ell}^2=1$.
We define the following probability distributions $\mu_1^{(\b{p},P)}$ and $\mu_2^{(\b{q},Q)}$, that
\begin{equation}\label{eq:p1xp2x}
\begin{aligned}
\mu_1^{(\b{p},P)} = \sum_{k = k^{\circ}+1}^{\bar k} \alpha^2_k \delta_{\frac{\b{\zeta}_k}{\alpha_k}},\qquad 
\mu_2^{(\b{q},Q)} = \sum_{\ell = \ell^{\circ}+1}^{\bar \ell} \beta_{\ell}^2 \delta_{\frac{\b{\eta_l}}{\beta_{\ell}}}.
\end{aligned}
\end{equation}
In the above, $\delta_{\frac{\b{\zeta}_k}{\alpha_k}}$ and $\delta_{\frac{\b{\eta_l}}{\beta_{\ell}}}$ are Dirac measures given by $\frac{\b{\zeta}_k}{\alpha_k}$ and $\frac{\b{\eta_l}}{\beta_{\ell}}$, respectively.
By the first equation of (\ref{eq:from_burer}), the probability distribution $\mu_1^{(\b{p},P)}$ is supported on $\mcal S_1$, and $\mu_2^{(\b{q},Q)}$ is supported on $\mcal S_2$, 
i.e., $\mu_1^{(\b{p},P)}\in\mcal{M}_1$ and $\mu_2^{(\b{q},Q)}\in\mcal{M}_2$.
Moreover, by the decompositions (\ref{eq:decomp_1ppP}) and (\ref{eq:decomp_1qqQ}), one may check that
\be \label{eq:xyEp} \begin{aligned}
\b{p} = \bE_{\mu_1^{(\b{p},P)}}[\b{s}_1],& \quad \b{q} = \bE_{\mu_2^{(\b{q},Q)}}[\b{s}_2], \\
P^{\prime} = \bE_{\mu_1^{(\b{p},P)}}[\b{s}_1\b{s}_1^{\tp}], & \quad Q^{\prime} = \bE_{\mu_2^{(\b{q},Q)}}[\b{s}_2\b{s}_2^{\tp}],\\
P = P^{\prime} + \sum_{k =1}^{k^{\circ}} \b{\zeta}_k\b{\zeta}_k^\mathsf T, & \quad Q= Q^{\prime} 
 + \sum_{\ell=1}^{\ell^{\circ}} \b{\eta}_{\ell}\b{\eta}_{\ell}^\mathsf T.
\end{aligned}\ee
The following lemma is useful later in {\bf Step~3} of this proof.
\begin{lemma}\label{lm:removeX0}
Let $(\b{p}, P)\in\mathcal C_{cp}^1$ and $(\b{q}, Q)\in \mathcal C_{cp}^2$,
and let $P^{\circ}$, $X^{\prime}$, $Q^{\circ}$, $Y^{\prime}$ be matrices given as in the decomposition (\ref{eq:XYdecomp}).
If $(\b{p}, P)$ maximizes $V(\,\cdot\,,(\b{q}, Q))$ over $\mathcal C_{cp}^1$, then $(\b{p}, P')$ is also a maximizer and $\lip \mcal{F}(P^{\circ}), Q \rip=0$.
Similarly, if $(\b{q}, Q)$ minimizes $V((\b{p}, P),\,\cdot\,)$ over $\mathcal C_{cp}^2$, then $(\b{q}, Q')$ is also a minimizer and $\lip \mcal{F}(P), Q^{\circ} \rip=0$.
\end{lemma}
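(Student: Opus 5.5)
The plan is to exhibit a ray of feasible points through $(\b{p},P)$ and use the linearity of $V$ in $P$. Concretely, I will show that $(\b{p}, P'+tP^{\circ})\in\mathcal C_{cp}^1$ for every $t\ge 0$. The point $t=1$ recovers $(\b{p},P)$, and $t=0$ gives $(\b{p},P')$.

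\emph{Feasibility of the ray.} Step 1 above yields $\mu_1^{(\b{p},P)}\in\mcal M_1$ with $\bE_{\mu_1}[\b{s}_1]=\b{p}$ and $\bE_{\mu_1}[\b{s}_1\b{s}_1^{\tp}]=P'$, by (\ref{eq:xyEp}). Feasibility at $t=0$ then follows directly.
\begin{itemize}
\item The matrix $\bigl[\begin{smallmatrix}1&\b{p}^{\tp}\\ \b{p}&P'\end{smallmatrix}\bigr]$ equals $\sum_{k>k^{\circ}}\alpha_k^2[1;\b{\zeta}_k/\alpha_k][1;\b{\zeta}_k/\alpha_k]^{\tp}$, which is CP.
\item Each support point $\b{s}$ satisfies $G\b{s}=\b{g}$. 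Hence $G\b{p}=\b{g}$ and $\mathrm{diag}(GP'G^{\tp})=\bE[(G\b{s})\bullet(G\b{s})]=\b{g}\bullet\b{g}$.
\item Binary coordinates satisfy $s_i^2=s_i$, which gives $p_i=P'_{ii}$ for $i\in\mathbb{B}_1$.
\end{itemize}
For the added term $tP^{\circ}=t\sum_{k\le k^{\circ}}\b{\zeta}_k\b{\zeta}_k^{\tp}$, I use the facts already established: $G\b{\zeta}_k=0$, and $\zeta_{k,i}=0$ for $i\in\mathbb{B}_1$. These give the following.
\begin{itemize}
\item $GP^{\circ}G^{\tp}=0$, so the diagonal constraint is unchanged.
\item $P^{\circ}_{ii}=0$ for $i\in\mathbb{B}_1$, so the binary linking constraints are unchanged.
\item $\bigl[\begin{smallmatrix}0&0\\0&tP^{\circ}\end{smallmatrix}\bigr]=t\sum_{k\le k^{\circ}}[0;\b{\zeta}_k][0;\b{\zeta}_k]^{\tp}$ is CP, because $\b{\zeta}_k\ge0$.
\end{itemize}
The sum of CP matrices is CP, so the whole ray lies in $\mathcal C_{cp}^1$.

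\emph{Linearity argument.} The function
\[
\phi(t):=V((\b{p},P'+tP^{\circ}),(\b{q},Q))=V((\b{p},P'),(\b{q},Q))+t\,\lip\mcal F(P^{\circ}),Q\rip
\]
is affine on $[0,\infty)$. By hypothesis, $\phi$ attains its maximum over this ray at the interior point $t=1$. An affine function on a half-line with an interior maximizer must be constant. Therefore $\lip\mcal F(P^{\circ}),Q\rip=0$. It follows that $\phi(0)=\phi(1)$, so $(\b{p},P')$ is also a maximizer of $V(\cdot,(\b{q},Q))$ over $\mathcal C_{cp}^1$.

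The second claim follows by the symmetric argument. Replace $G,\b{g},\mathbb{B}_1$ by $H,\b{h},\mathbb{B}_2$, and use the ray $(\b{q},Q'+tQ^{\circ})$. The affine function $t\mapsto \b{p}^{\tp}C\b{q}+\lip\mcal F(P),Q'\rip+t\lip\mcal F(P),Q^{\circ}\rip$ is minimized at $t=1$, so its slope $\lip\mcal F(P),Q^{\circ}\rip$ vanishes.

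The main obstacle is the feasibility of the ray, in particular the vanishing of the binary coordinates of the recession directions $\b{\zeta}_k$. That fact was already derived above from Assumption~\ref{as:0to1} together with Lemmas~2.2--2.3 of \cite{burer2009copositive}, so here it only needs to be invoked. The rest is routine.
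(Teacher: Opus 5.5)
Your proposal is correct and follows the paper's proof essentially step for step: both show that the ray $(\b{p},P'+tP^{\circ})$, $t\ge 0$, lies in $\mathcal C_{cp}^1$ using $G\b{\zeta}_k=0$ and $\zeta_{k,i}=0$ for $i\in\mathbb{B}_1$, and both then use the linearity of $V$ in $P$. The paper compares $t=1$ against $t=0$ and $t=2$, which is your ``affine function with an interior maximizer is constant'' argument; the only other difference is cosmetic: you check $(\b{p},P')\in\mathcal C_{cp}^1$ through the induced measure $\mu_1^{(\b{p},P)}$, whereas the paper subtracts the vanishing $P^{\circ}$ contributions from the constraints already satisfied by $P$.
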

{\bf{Proof of Lemma~\ref{lm:removeX0}.}}
We only show the first half of this lemma, and the second half can be proved similarly. 
For $(\b{p}, P)\in\mathcal C_{cp}^1$, consider its decomposition given in (\ref{eq:XYdecomp}). Then we have  
\[ \b{p} = \sum_{k=k^{\circ}+1}^{\bar k} \alpha_k\b{\zeta}_k,\quad P' = \sum_{k=k^{\circ}+1}^{\bar k} \b{\zeta}_k\b{\zeta}_k^{\tp} = \sum_{k=k^{\circ}+1}^{\bar k} \alpha_k^2\frac{\b{\zeta}_k}{\alpha_k}\frac{\b{\zeta}_k}{\alpha_k}^{\tp},\] 
\[ \frac{\b{\zeta}_k}{\alpha_k}\in \mcal{S}_1\ (k = k^{\circ}+1\ddd \bar k).\] 
It follows from (\ref{eq:from_burer}) that
\be \label{eqn:x0}diag ( G P^{\circ} {G}^{\tp})=0,\quad P^{\circ}\succcurlyeq_{cp}0 ,\quad P^{\circ}_{i,i} = 0,\ \forall\, i\in \mathbb{B}_1.\ee
Because $diag(GP{G}^\mathsf T) = diag(G (P^{\circ}+P') {G}^\mathsf T)=  \b{g} \bullet \b{g}$ and $diag ( A P^{\circ} {A}^{\tp})=0$, we have $diag(A P' {A}^\mathsf T)=  \b{g} \bullet \b{g}$. 
Since $p_{i}- P_{i,i} = p_{i} - (P^{\circ}_{i,i} + P'_{i,i})= 0$ and because $P^{\circ}_{i,i} = 0$ for all $i\in \mathbb{B}_1$, we have $p_{i}- P'_{i,i} = 0$. 
Finally, it is easy to verify that  
$\left[\begin{array}{cc} 1 & {\b{p}}^\mathsf T\\ \b{p} & P'\end{array}\right] \succcurlyeq_{cp} 0$ by definition. 
Hence, $(\b{p},P')\in \mathcal C_{cp}^1$. 

By (\ref{eqn:x0}), we have $P'+t P^{\circ}\in \mathcal C_{cp}^1$ for all $t\ge 0$.
Since $(\b{p}, P) = (\b{p}, P'+P^{\circ})$ maximizes $V(\,\cdot\,,(\b{q}, Q))$, we have
\begin{multline*}
\qquad\qquad\qquad
\b{p}^{\tp} C \b{q} + \lip \mcal{F}(P'+P^{\circ}), Y \rip = V((\b{p}, P'+P^{\circ}),(\b{q}, Q)) \\
\ge V((\b{p}, P'),(\b{q}, Q))  =  \b{p}^{\tp} C \b{q} + \lip \mcal{F}(P'), Y \rip.
\qquad\qquad\qquad
 \end{multline*}
So, $\lip \mcal{F}(P^{\circ}), Y \rip\ge 0$.
On the other hand, as $P'+2 P^{\circ}\in \mathcal C_{cp}^1$, we also have
\[ \b{p}^{\tp} C \b{q} + \lip \mcal{F}(P'+P^{\circ}), Y \rip \ge V((\b{p}, P'+2P^{\circ}),(\b{q}, Q))
 =  \b{p}^{\tp} C \b{q} + \lip \mcal{F}(P'+2P^{\circ}), Y \rip.\]
This implies that $\lip \mcal{F}(P^{\circ}), Y \rip\le 0$.
Therefore, we conclude that $\lip \mcal{F}(P^{\circ}), Y \rip= 0$, and $(\b{p}, P')$ also minimizes $V(\,\cdot\,,(\b{q}, Q))$ over $\mathcal C_{cp}^1$.
\qed

\vspace{1em}

{\bf Step 2.} Next, we will show for any pair of feasible mixed strategy for the two players in the bi-quadratic game, we can construct a pair of feasible pure strategies for the two players in the bilinear game over CP cones. 

Let $\mu_1 \in \mcal M_1$, and $\mu_2 \in \mcal M_2$ be a pair of feasible mixed strategies for (\ref{eq:mix_bigame}). 
Define
\[ \b{p} \coloneq \bE_{\mu_1}[\b{s}_1],\quad 
P \coloneq \bE_{\mu_1}[\b{s}_1\b{s}_1^{\tp}],\quad 
\b{q} \coloneq \bE_{\mu_2}[\b{s}_2],\quad 
Q \coloneq \bE_{\mu_2}[\b{s}_2\b{s}_2^{\tp}]. \]
Note that for every $\b{s}_1\in\mcal{S}_1$, 
since $\b{s}_1\ge 0$ and $s_{1,i}$ are binary variables for each $i \in \mathbb{B}_1$,
the following hold:
\[ G\b{s}_1 = \b{g},\quad  (G\b{s}_1)\bullet (G\b{s}_1) = \b{g}\bullet\b{g},
\quad s_{1,i} = s_{1,i}^2\ (i\in\mathbb{B}_1),\quad 
\left[\begin{array}{cc} 1 & {\b{s}_1}^\mathsf T\\ \b{s}_1 & \b{s}_1\b{s}_1^{\tp}\end{array}\right] \succcurlyeq_{cp} 0.\]
By taking expectation with respect to $\b{s}_1$ under the probability distribution $\mu_1$, the above implies that
\[G{\b{p}} = \b{g},\quad diag(GPG^{\tp})=\b{g}\bullet\b{g},\quad
p_{i} -  P_{i,i} = 0\ (i \in \mathbb{B}_1),\quad
\left[\begin{array}{cc} 1 & {\b{p}}^\mathsf T\\ \b{ p} & P \end{array}\right] \succcurlyeq_{cp} 0. \]
Therefore, $(\b{p}, P)\in \mathcal C_{cp}^1$, i.e., it is a feasible pure strategy of Player 1 in the bilinear game (\ref{eq:cpgame}). Similarly, we can show that $(\b{q}, Q) \in \mathcal C_{cp}^2$.

\vspace{1em}

{\bf Step 3.} We now will show that $((\b{p}^*,P^*),(\b{q}^*,Q^*))$ is a Nash equilibrium of (\ref{eq:cpgame}) if and only if the induced measure $(\mu^{(\b{p}^*,P^*)}_1,\pi^{(\b{q}^*,Q^*)}_2)$ given by (\ref{eq:p1xp2x}) is a Nash equilibrium of (\ref{eq:mix_bigame}).

Suppose $((\b{p}^*,P^*),(\b{q}^*,Q^*))$ is a Nash equilibrium of (\ref{eq:cpgame}).
Let $\mu_1^{(\b p^*_x, {X^*})}$ and $\mu_2^{(\b p^*_y, {Y^*})}$  be probability distributions given by (\ref{eq:p1xp2x}).
Then, (\ref{eq:xyEp}) holds.
Recall that the objective function $U$ over probability measures is given in (\ref{eqn:expect_payoff}).
By Lemma~\ref{lm:removeX0}, it is easy to check that 
\[U(\mu^{(\b{p}^*,P^*)}_1,  \mu^{(\b{q}^*,Q^*)}_2) = V((\b{p}^*,{P^*}^{\prime}),(\b{q}^*,{Q^*}^{\prime}))  = V((\b{p}^*,{P^*}),(\b{q}^*,{Q^*})).\]
Now we show $(\mu^{(\b{p}^*,P^*)}_1,\mu^{(\b{q}^*,Q^*)}_2)$ is a Nash equilibrium of (\ref{eq:mix_bigame}).
Suppose there exists $\hat\mu_1$ such that $U(\hat\mu_1,\mu^{(\b{q}^*,Q^*)}_2) > U(\mu^{(\b{p}^*,P^*)}_1,\mu^{(\b{q}^*,Q^*)}_2)$.
For $\b{s}_1 \in \mcal{S}_1 $, define 
\[ \hat{\b p} = \bE_{\hat\mu_1}[\b{s}_1 ],\quad 
\hat{P} = \bE_{\hat\mu_1}[\b{s}_1\b {s}_1^{\tp}]. \]
We can follow the same procedure in Step 2 to show that $(\b{\hat p}, \hat P)$ is a feasible pure strategy of Player 1 to the bilinear game over CP cones. 
It is easy to verify that $U(\hat\mu_1,\mu^{(\b{q}^*,Q^*)}_2) = V((\hat{\b{p}},\hat{P}),(\b{q}^*,Q^*))$. 
We therefore have 
\[U(\hat\mu_1,\mu^{(\b{q}^*,Q^*)}_2) = V((\hat{\b{p}},\hat{P}),(\b{q}^*,Q^*)) >  V((\b{p}^*,P^*),(\b{q}^*,Q^*)) = U(\mu^{(\b{p}^*,P^*)}_1,\mu^{(\b{q}^*,Q^*)}_2) .\]
This contradicts to the assumption that $((\b{p}^*,P^*),(\b{q}^*,Q^*))$ is a Nash equilibrium of (\ref{eq:cpgame}),
which forces such $\hat{\mu}_1$ does not exist, and thus, $\mu^{(\b{p}^*,P^*)}_1$ maximizes $V( \,\cdot\, , \mu^{(\b{q}^*,Q^*)}_2 )$.
Similarly, we can also show $\mu^{(\b{q}^*,Q^*)}_2$ minimizes $V((\mu^{(\b{p}^*,P^*)}_1,\,\cdot\,)$.

Conversely, let $(\mu_1^*,\mu_2^*)$ be a Nash equilibrium of (\ref{eq:mix_bigame}),
and let
\[ \begin{gathered}
\b{p}^* = \bE_{\mu^*_1}[\b{s}_1],\quad \b{q}^* = \bE_{\mu^*_2}[\b{s}_2], \quad
P^* = \bE_{\mu^*_1}[\b{s}_1\b{s}_1^{\tp}] , \quad 
Q^* = \bE_{\mu^*_2}[\b{s}_2\b{s}_2^{\tp}].
\end{gathered}\]
As shown in Step~2, one has $(\b{p}^*,P^*)\in \mathcal C_{cp}^1$ and $(\b{q}^*,Q^*)\mathcal C_{cp}^2$, which are feasible pure strategies of the bilinear CP game (\ref{eq:cpgame}).
Also, it follows from the definition that $V((\b{p}^*,Y^*), (\b{q}^*,Q^*)) = U(\mu_1^*,\mu_2^*).$
Assume that there exists $(\hat{\b p},\hat P)\in \mathcal C_{cp}^1$ such that 
$V((\hat {\b p}_x,\hat X), (\b{q}^*,Q^*)) >  V((\b{p}^*,P^*), (\b{q}^*,Q^*))$. 
Let $\mu_1^{(\hat{\b p},\hat P)}$ be the corresponding probability distribution in $\mcal M_1$ constructed by (\ref{eq:p1xp2x}).
By Lemma \ref{lm:removeX0}, we have
\[ U(\mu_1^{(\hat{\b p},\hat P)},\mu_2^* ) = V((\hat{\b p},\hat P)), (\b{q}^*,Q^*))> V((\b{p}^*,P^*), (\b{q}^*,Q^*)) = U(\mu_1^*,\mu_2^* ). \]
This contradicts to the assumption that $(\mu_1^*,\mu_2^*)$ is a Nash equilibrium of (\ref{eq:mix_bigame}).
So $(\b{p}^*,P^*)$ maximizes $V(\,\cdot\,, (\b{q}^*,Q^*))$.
We can also show that $(\b{q}^*,Q^*)$ minimizes $V((\b{p}^*,P^*),\,\cdot\,)$ in a similar way.
Thus $((\b{p}^*,P^*), (\b{q}^*,Q^*))$ is a Nash equilibrium of (\ref{eq:cpgame}).
This completes the proof. 
\qed

\vspace{1em}

\noindent{\bf{Proof of Proposition~\ref{prop:gamecpco}.}}
We show $\mcal{V}_1^* = \mcal{Z}_1^{cop-cp}$ under $B^{\tp}B\succ_{co}0$; the statement $\mcal{V}_2^* = \mcal{Z}_2^{cop-cp}$ under $A^{\tp}A\succ_{co}0$ follows by the same argument.

This proof follows the same procedure as the proof of Lemma \ref{lem:z=w}. Fix $(\b{p}, P)$ and consider the inner minimization problem in (\ref{eq:cpmaximin}).
It suffices to prove strong duality between this problem and its copositive dual if ${B}^\mathsf T B\succ_{co} 0$, which we do by exhibiting a Slater point for the dual feasibility set.
Consider $\mcal K_{cop}^y$ for any $(\b{p}, P)$. Set $\b{w} =0, \b{q} = 0$, and $\b{v} = \kappa\b{1}$. The copositive constraint is then imposed to the following block matrix. 
 \[\left[\begin{array}{cc} \rho & -\frac{\b{p_x}^{\tp}C^{\tp}}{2}\\ -\frac{C\b{p_x}}{2}& \kappa{B}^\mathsf T B - \mcal{Q}(X) \end{array}\right]\]
 
By the copositive Schur complement condition, this matrix is strictly copositive, if $\rho > 0$ and $\kappa {B}^\mathsf T B - Q(X) - \frac{1}{4\rho}(C\b{p}{\b{p}}^\mathsf T C^{\tp}) \succ_{co} 0 $. Since ${B}^\mathsf T B$, we can choose  $\kappa$ and $\rho$ sufficiently large so that the above strict copositivity holds. Hence we have a Slater point for the dual, and conic strong duality applies to the inner pair. Maximizing the common inner value over $(\b{p}, P)$ yields $\mcal{V}_1^* = \mcal{Z}_1^{cop-cp}$. This completes the proof. \qed

\vspace{1em}

\noindent{\bf{Proof of Lemma~\ref{lem:gameNE}.}}
By Proposition \ref{prop:gamecpco}, if $A^{\tp}A\succ_{co}0$ and $B^{\tp}B\succ_{co}0$, then  $\mcal{V}_1^* = \mcal{Z}_1^{cop-cp}$ and  $\mcal{V}_2^* = \mcal{Z}_2^{cop-cp}$. In addition, if $\mcal X$ and $\mcal Y$ are compact, classic results in game theory imply $\mcal{V}_1^* = \mcal{V}_2^*.$ To establish the first result, it suffice to show that the compactness of $\mcal X$ and $\mcal Y$ yields $A^{\tp}A\succ_{co}0$ and $B^{\tp}B\succ_{co}0$. If $\mcal X$ is compact, then $\mathcal{L}_x$ is bounded; hence $A\b{x}=\b{0}$ implies $\b{x}=\b{0}$. Therefore, $A\b{x}\neq \b{0}$ for any nonnegative $\b{x}\neq \b{0}$. This is equivalent to $\b{x}^{\tp}A^{\tp}A\b{x} > 0$ for $\b{x}\ge \b{0}, \b{x}\neq \bb{0}$. By the definition of the strictly copositive matrix, $A^{\tp}A\succ_{co}0$. An identical argument shows $B^{\tp}B\succ_{co}0$ if $\mcal Y$ is compact. 

The second result is already established in Theorem \ref{tm:bqg=cpg}.

We now show the last result. Consider Step 1 of the proof of Theorem \ref{tm:bqg=cpg}. For any feasible pure strategies  $(\b{p}, X)$ and $(\b{q}, Q)$ of the bilinear game over CP cones and given the rank-1 decomposition used in Step 1, by \cite{burer2009copositive} Lemma~2.2 and Lemma~2.3, we have $A\b{\zeta}_k = 0$ for every $k\in [k^{\circ}]$, and $B\b{\eta}_{\ell} = 0$ for every $\ell \in [\ell^{\circ}]$. When $\mathcal{L}_x$ and $\mathcal{L}_y$ are bounded, it follows that $\b{\zeta}_k = 0$ for every $k\in [k^{\circ}]$, and $\b{\eta}_{\ell} = 0$ for every $\ell \in [\ell^{\circ}]$. By definition, this yields $P^{\circ} = 0$ and $Q^{\circ} = 0$ for all feasible $(\b{p}, X)$ and $(\b{q}, Q)$. Using the correspondence between optimal mixed strategies of the bi-quadratic game, $(\PP^*_x, \PP^*_y)$, and optimal pure strategies of the bilinear CP game, $((\b{p}^*_x, X^*), (\b p^*_y, Y^*))$, we obtain $\bE_{\PP^*_x}[\bx\bx^\mathsf T] = {X^*}' = X^* -P^{\circ} = X^* $, $\bE_{\PP^*_2}[\by\by^\mathsf T] ={Y^*}' = Y^* -Q^{\circ} = Y^*$. 

This completes the proof. \qed

\section{Proofs of Statements for Section \ref{sec:relax}}\label{sc:prfsc4}

The following lemma is useful for the proofs in Section \ref{sec:relax}. 

\begin{lemma}\label{lm:objdiff}
Let $(X,\b{z},Z,\b{\xi})$ and $(Y,\b{w},W,\b{\nu})$ be tuples such that
\[ \begin{gathered}
\lip A_i, X \rip = {b}_i, \  i = 1,..., I, \quad \lip B_j, Y \rip = {c}_j, \quad j = 1,..., J, \\
\mcal{Q}(X) - \sum_{j=1}^J z_j B_j = Z, \quad  \sum_{i=1}^I w_i A_i - \mcal{Q}^{\star}(Y) = W,\quad X\succeq 0,\quad Y \succeq 0. 
\end{gathered} \]
Then
\be\label{eq:objdiff}
\begin{aligned}
\b{b}^{\tp}\b{w} - \b{c}^{\tp}\b{z} = \lip \varphi_n(X),\psi_n(W)\rip + \lip \varphi_m(Y),\psi_m(Z)\rip.
\end{aligned}
\ee
\end{lemma}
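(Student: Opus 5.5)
The plan is a direct algebraic computation with the trace inner product, after which the two trace terms are rewritten through the isomorphisms $\varphi_n,\psi_n$ and $\varphi_m,\psi_m$ of Section~\ref{sec:relax}. The one nontrivial ingredient is the pairing identity
\[
\lip \varphi_n(X),\psi_n(W)\rip = \lip X, W\rip \quad \mbox{for all } X, W\in\mcal{S}^n,
\]
where the left-hand side is the natural pairing between degree-2 htms and quadratic forms: $\lip \b{\xi}, p\rip=\sum_{\b{\alpha}} p_{\b{\alpha}}\xi_{\b{\alpha}}$, i.e.\ the Riesz functional $\mmc{L}_{\b{\xi}}(p)$. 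The analogous identity in dimension $m$ is also needed.

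\textbf{Step 1 (pairing identity).} For a rank-one $X=\lmd\, \bx\bx^{\tp}$ with $\bx\in\Dt^n$, the definitions give $\varphi_n(X)=\lmd[\bx]_2^{\hom}$. Hence
\[
\lip \varphi_n(X),\psi_n(W)\rip=\lmd\,\mmc{L}_{[\bx]_2^{\hom}}(\bx^{\tp}W\bx)=\lmd\,\bx^{\tp}W\bx=\lip W,\lmd\bx\bx^{\tp}\rip .
\]
Both sides are linear in $X$. The matrices $\bx\bx^{\tp}$ with $\bx\in\Dt^n$ span $\mcal{S}^n$: after rescaling they include $e_ie_i^{\tp}$ and $(e_i+e_j)(e_i+e_j)^{\tp}$. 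So the identity extends to all $X\in\mcal{S}^n$, and the same argument works for $\varphi_m,\psi_m$. This is also what makes $\varphi_n$ well defined as a linear map on all of $\mcal{S}^n$, consistent with its definition in Section~\ref{sec:relax}. The hypotheses $X\succeq0$ and $Y\succeq 0$ are therefore not really needed for the identity; they only reflect the setting in which the lemma is applied.

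\textbf{Step 2 (computation).} Using the primal constraints $\lip A_i,X\rip=b_i$ and the dual equation $\sum_i w_iA_i=W+\mcal{Q}^{\star}(Y)$,
\[
\b{b}^{\tp}\b{w}=\Big\langle \sum_{i=1}^I w_iA_i, X\Big\rangle=\lip W,X\rip+\lip \mcal{Q}^{\star}(Y),X\rip=\lip W,X\rip+\lip \mcal{Q}(X),Y\rip .
\]
Similarly, using $\lip B_j,Y\rip=c_j$ and $\sum_j z_jB_j=\mcal{Q}(X)-Z$,
\[
\b{c}^{\tp}\b{z}=\Big\langle \sum_{j=1}^J z_jB_j, Y\Big\rangle=\lip \mcal{Q}(X),Y\rip-\lip Z,Y\rip .
\]
Subtracting cancels the bilinear term $\lip\mcal{Q}(X),Y\rip$ and leaves
\[
\b{b}^{\tp}\b{w}-\b{c}^{\tp}\b{z}=\lip X,W\rip+\lip Y,Z\rip .
\]
Applying Step 1 in dimensions $n$ and $m$ then gives \eqref{eq:objdiff}.

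The only point requiring care is Step 1. One must check that the inner product between the htms space $\re^{\N^n_{2,\hom}}$ and the quadratic forms $\re[x]_2^{\hom}$ is the one under which $\mmc{R}[\Dt^n]_2^{\hom}$ and $\mmc{P}[\Dt^n]_2^{\hom}$ are dual cones, and that it matches the trace inner product, including the factor-of-two convention on off-diagonal entries. I would fix this by working with the Riesz functional, as above, rather than with explicit coefficient vectors. Once that is settled, the rest is routine linear algebra.
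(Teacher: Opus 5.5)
Your proposal is correct and follows essentially the same route as the paper. First, the linear constraints reduce $\b{b}^{\tp}\b{w}-\b{c}^{\tp}\b{z}$ to $\lip W,X\rip+\lip Z,Y\rip$; then each trace pairing is rewritten by expanding $X$ (resp.\ $Y$) as a linear combination of matrices $\bx\bx^{\tp}$ with $\bx\in\Dt^n$ (resp.\ $\Dt^m$), using $\bx^{\tp}W\bx=\lip [\bx]_2^{\hom},\psi_n(W)\rip$. Your explicit spanning argument makes precise the decomposition step the paper leaves implicit, and you are right that the hypotheses $X\succeq 0$, $Y\succeq 0$ play no role.
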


\noindent{{\bf Proof of Lemma~\ref{lm:objdiff}.}}
Since $\lip A_i, X \rip = {b}_i$ and $\lip B_j, Y \rip = {c}_j$ for every $i,j$, we have
\be\label{eq:plug-in-cons} \begin{aligned}
 &  \b{b}^{\tp}\b{w} - \b{c}^{\tp}\b{z} =  \Big\langle \sum_{i=1}^I w_iA_i, X \Big\rangle -\Big\langle \sum_{j=1}^J z_j B_j, Y \Big\rangle   =  \langle W, X \rangle + \langle Z, Y \rangle.
\end{aligned}\ee
Let $X = \lmd_1\bx^{(1)}(\bx^{(1)})^{\tp} + \cdots + \lmd_r\bx^{(r)}(\bx^{(r)})^{\tp}$. 
% \textcolor{red}{(Yini: Here we need $X$ is a PSD?)}
Then by letting $p_W\coloneqq \psi_n(W)$, we have
\[ \begin{aligned}
\langle W, X \rangle & = \sum_{l=1}^r \lmd_l\langle W, \bx^{(l)}(\bx^{(l)})^{\tp} \rangle = \sum_{l=1}^r \lmd_l(\bx^{(l)})^{\tp} W\bx^{(l)} = \sum_{l=1}^r \lmd^{(l)} p_W (\bx^{(l)}) \\
& = \sum_{l=1}^r  \lmd^{(l)} coef(p_W)^{\tp}\varphi_n(\bx^{(l)}(\bx^{(l)})^{\tp})  = \lip \varphi_n(X),\psi_n(W)\rip.
\end{aligned}
\]
Similarly we also have $\langle Z, Y \rangle = \lip \varphi_m(Y),\psi_m(Z)\rip$, and this finishes the proof.
\qed

The following lemma adapted from \cite{laraki2012semidefinite} is useful for proving Theorem~\ref{tm:asym_conv}.
It can be shown by noticing that $\Dt^n$ is compact and using Lemma~\ref{lem:moment_putinar} and Banach-Alaoglu Theorem in functional analysis (see \cite{lasserre2009moments} Theorem~C.18).
\begin{lemma}\label{eq:infi_tms}
Let $\{\b{\xi}^{(l)}\}_{l=1,2,\ddd}$ be a sequence of tms such that $\b{\xi}^{(k)} \in \mathscr{S}[\Dt^n]_{2k}$.
Then, there exists a subsequence $l_i$ and a probability measure $\mu$ supported on $\Dt^n$ such that 
\[ \lim_{i\to\infty} \b{\xi}^{(l_i)}_{\alpha}\to \int_{\Dt^n} \bx^{\alpha} d\mu,\quad \forall\alpha\in \N^n.  \]
\end{lemma}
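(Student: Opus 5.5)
The plan is a compactness argument: bound the entries of the $\b{\xi}^{(l)}$ uniformly, extract a pointwise convergent subsequence by a diagonal argument, and identify the limit as a moment sequence via Putinar's theorem. As in \cite{laraki2012semidefinite}, I read the statement with the zeroth entry included and normalized, i.e.\ $\xi^{(l)}_{\b{0}}=\mmc{L}_{\b{\xi}^{(l)}}(1)=1$. This normalization is what makes the limit measure a probability measure. If only $\sup_l \xi^{(l)}_{\b{0}}<\infty$ is available, the same argument gives a finite nonnegative measure.

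\textbf{Step 1 (uniform bounds).} I would first show $|\xi^{(l)}_{\b\alpha}|\le \xi^{(l)}_{\b 0}$ for all $\b\alpha\in\N^n_{2l}$ and all $l$.
\begin{enumerate}
\item Take a diagonal entry of $L^{(l)}_{1-\bx^{\tp}\bx}[\b\xi^{(l)}]\succeq 0$. It gives $\sum_i \mmc{L}_{\b\xi^{(l)}}(\bx^{2\b\beta+2e_i})\le \mmc{L}_{\b\xi^{(l)}}(\bx^{2\b\beta})$ for $|\b\beta|\le l-1$.
\item Each summand on the left is a diagonal entry of the moment matrix $M_l[\b\xi^{(l)}]\succeq 0$, hence nonnegative.
\item Induction on $|\b\beta|$ then gives $0\le \xi^{(l)}_{2\b\beta}\le \xi^{(l)}_{\b 0}$ for all even multi-indices $2\b\beta$ of degree at most $2l$.
\item For a general $\b\alpha=\b\beta+\b\gamma$ with $|\b\beta|,|\b\gamma|\le l$, the $2\times 2$ principal minors of $M_l[\b\xi^{(l)}]$ give $|\xi^{(l)}_{\b\alpha}|\le \sqrt{\xi^{(l)}_{2\b\beta}\,\xi^{(l)}_{2\b\gamma}}\le \xi^{(l)}_{\b 0}$.
\end{enumerate}
This step is where the redundant ball constraint $1-\bx^{\tp}\bx\ge 0$ in $\Delta^n$ is essential: it supplies the archimedean bound.

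\textbf{Step 2 (extraction).} Extend each $\b\xi^{(l)}$ by zero to an element of $\re^{\N^n}$. These extensions lie in the compact product $\prod_{\b\alpha}[-1,1]$ (Tychonoff, or equivalently Banach--Alaoglu in $\ell^\infty$ as in \cite{lasserre2009moments} Theorem~C.18). Since $\N^n$ is countable, a Cantor diagonal argument gives a subsequence $l_i$ with $\xi^{(l_i)}_{\b\alpha}\to y_{\b\alpha}$ for every $\b\alpha\in\N^n$.

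\textbf{Step 3 (closed conditions pass to the limit).} Fix $k$. For all $l_i\ge k$ the relevant truncations satisfy $M_k\succeq0$, $L^{(k)}_{x_i}\succeq0$, $L^{(k)}_{1-\bx^{\tp}\bx}\succeq0$ and $\mathscr{V}^{(2k)}_{1-\b e^{\tp}\bx}=0$. Here I use that the principal submatrices of the order-$l_i$ moment and localizing matrices are the order-$k$ ones. All these conditions are closed and involve finitely many entries, so they hold for the truncation of $\b y$. Hence the full sequence $\b y$ has $M_k[\b y]\succeq 0$ and all localizing matrices PSD for every $k$, with $y_{\b 0}=1$.

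\textbf{Step 4 (representing measure).} The quadratic module generated by $x_1,\dots,x_n,\,1-\bx^{\tp}\bx$, together with the ideal of $1-\b e^{\tp}\bx$, is archimedean. Putinar's moment theorem (the moment counterpart of Lemma~\ref{lem:iqandp}(2), i.e.\ Lemma~\ref{lem:moment_putinar}) therefore yields a measure $\mu$ supported on $\Delta^n$ with $y_{\b\alpha}=\int_{\Delta^n}\bx^{\b\alpha}d\mu$ for all $\b\alpha$. Its mass is $y_{\b 0}=1$, so $\mu$ is a probability measure, which proves the claim.

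The main obstacle is Step 1. The bounds must be uniform in $l$ even though the relaxation order grows along the sequence, and this uniformity is exactly what the ball constraint provides. Everything after that is standard compactness plus Putinar.
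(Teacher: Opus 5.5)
Your proof is correct and takes the same route the paper only sketches: bound the entries, extract a convergent subsequence by Banach--Alaoglu/diagonalization, then apply the moment form of Putinar's theorem (Lemma~\ref{lem:moment_putinar}); your Step~1 makes explicit the uniform bound that the paper attributes loosely to ``compactness of $\Delta^n$''. You are also right to add the normalization $\xi^{(l)}_{\b{0}}=1$ (or at least $\sup_l \xi^{(l)}_{\b{0}}<\infty$), because the statement as written fails without it, e.g.\ for $\b{\xi}^{(l)}=l\,[\bx]_{2l}$ (unbounded zeroth entry) or $\b{\xi}^{(l)}=0$ (only the zero measure represents the limit).
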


\noindent{{\bf Proof of Theorem~\ref{tm:asym_conv}.}}
We prove the part 1; part 2 follows by a similar argument.

To show the asymptotic convergence, it suffices to show the existence of positive constants $\mcal{C}$ and $\mcal{D}$ such that for any $\varepsilon>0$, the following holds for all sufficiently large $k_1,k_2$:
\be\label{eq:CWD} \mcal{W}^*_{\max} - \mcal{C}\varepsilon \le \mcal{W}_{\max}^{(k_1,k_2)} \le   \mcal{W}^*_{\max} + \mcal{D}\varepsilon. \ee

First, we establish the left-hand inequality in \eqref{eq:CWD}. 
Let $(X,\b{z},Z)^*$ be the optimal solution of (\ref{eq:lp_max}).
Then
\[\begin{array}{ccl}
\mcal{W}_{\max}^* = & \displaystyle\max\limits_{\b{z},Z} &\b{c}^{\tp}\b{z} \\
& \st & \displaystyle \mcal{Q}(X^*) - \sum_{j=1}^J z_j B_j = Z,\ \psi_m(Z)\in\mcal{P}[\Delta^m]_2^{\hom}.
\end{array}\]
By Assumption~\ref{as:co}(a), there exists a $\b{z}^{\circ}\in \re^J$ such that

\be\label{eq:zo_pos} 
\psi_m(\sum_{j=1}^J z_j^{\circ} B_j) - 1\in\mcal{P}[\Delta^m]_2.\ee 
For any fixed $\varepsilon>0$, set
\be\label{eq:z_eps} 
\b{z}_{\varepsilon}\coloneqq \b{z}^* - \varepsilon \b{z}^{\circ}, \quad Z_{\varepsilon}\coloneqq Z^* + \varepsilon \sum_{j=1}^J z_j^{\circ} B_j. \ee
Then $\mcal{Q}(X^*) - \sum_{j=1}^J z_{\varepsilon,j} B_j = Z_{\varepsilon}$. Moreover, by (\ref{eq:zo_pos}), $\psi_m({Z_{\varepsilon}})$ is strictly positive on $\Delta^m$, so by Putinar's Positivstellensatz (see the second item of Lemma \ref{lem:iqandp}), there exists some integer $k^{\varepsilon}_2\in \N_+$ such that
\[ \psi_m({Z_{\varepsilon}})\in\iq[\Delta^{m}]_{2k_2},\quad \forall\,k_2 \ge k^{\varepsilon}_2.\]  Since $\varphi_n(X^*)\in \mcal{R}[\Delta^n]^{\hom}_2$, for any $k_1\in \N_+$, there exists a tms $\b{\xi}^* \in \mathscr{S}[\Delta^n]_{2k_1}$ whose homogeneous degree-2 truncation $\b{\xi}^*|_2^{\hom}$ equals  $\varphi_n(X^*)$.
Thus, for all $k_1\ge 1$ and all $k_2 \ge k^{\varepsilon}_2$, the tuple $(X^*, \b{z}_{\epsilon},Z_{\epsilon},\b{\xi}^*)$ is feasible to Problem \eqref{eq:lp_max_relx} with the relaxation orders $(k_1,k_2)$.
{Letting $\mcal{C}\coloneqq \Vert \b{z}^{\circ}\Vert \Vert\b{c}\Vert$, it is clear that $\mcal{C}$ is irrelavant to $\varepsilon$, and we have }
\[\mcal{W}_{\max}^* - \b{c}^{\tp}\b{z}_{\varepsilon} = \b{c}^{\tp}(\b{z}^*-\b{z}_{\varepsilon}) \le \varepsilon \Vert \b{z}^{\circ}\Vert \Vert \b{c}\Vert  \le \mcal{C} \varepsilon. \] 
Hence, for all $k_1\ge 1$ and $k_2 \ge k^{\varepsilon}_2$, we obtain
\be\label{eq:W-C} \mcal{W}_{\max}^* - \mcal{C}\varepsilon \le \b{c}^{\tp}\b{z}_{\varepsilon}\le \mcal{W}_{\max}^{(k_1,k_2)}. \ee

Next, we prove the right-hand inequality in (\ref{eq:CWD}). 
Consider a sequence of tuples $(\tilde{X},\b{\tilde{z}},\tilde{Z},\b{\tilde{\xi}})^{(k_1,k_2)}$ with $k_1,k_2\in\N_+$, such that for each $(k_1,k_2)$, the tuple $(\tilde{X},\b{\tilde{z}},\tilde{Z},\b{\tilde{\xi}})^{(k_1,k_2)}$ is feasible for (\ref{eq:lp_max_relx}) with relaxation order $(k_1,k_2)$ and 
\be\label{eq:ctz>=W-1/k} \b{c}^{\tp} \b{\tilde{z}}^{(k_1, k_2) }\ge \mcal{W}_{\max}^{(k_1,k_2)} - 1/k_1. \ee
(when $\mcal{W}_{\max}^{(k_1,k_2)}$ is attainable, one choose the the tuple to be optimal solution of (\ref{eq:lp_max_relx})).
For any fixed $\varepsilon>0$ and $k_2 \ge {k^{\varepsilon}_2}$, by {Lemma~\ref{eq:infi_tms}}
there exists a probability measure $\mu$ on $\Delta^n$ and a subsequence $\{\b{\tilde{\xi}}^{(k_1^{(l)},k_2)}\}_{l=1}^{\infty}$ of $\{\b{\tilde{\xi}}^{(k_1,k_2)}\}_{k_1=1}^{\infty}$ such that
\be\label{eq:xitomeasure} \lim_{l\to\infty}\b{\tilde{\xi}}^{(k_1^{(l)},k_2)}_{\alpha}  =   \int \bx^{\alpha}d\mu ,\quad \forall \alpha \in \N^n. \ee 
Because $\varphi_n:\mcal{S}^{n} \mapsto\re^{\N^{n}_{2,\hom}} $ is a linear isomorphism,
we can set $\hat{X}\coloneqq \varphi_n^{-1}(\int [\bx]_{2}^{\hom}d\mu)$.
By the definition of $\mcal{R}[\Delta^n]_2^{\hom}$, we have \[ \varphi_n(\hat{X}) \in \mcal{R}[\Delta^n]_2^{\hom}.\]  
Given that $\varphi_n(\tilde{X}^{(k_1^{(l)},k_2)}) = \b{\tilde{\xi}}^{(k_1^{(l)},k_2)}|^{\hom}_{2}$ and also by \eqref{eq:xitomeasure}, $\tilde{X}^{(k_1^{(l)},k_2)} \to \hat{X}$ entrywise as $l\to \infty$. So for every $i=1\ddd I$,
\be\label{eq:AhatX=b}
\langle A_i, \hat{X} \rangle = \lim_{l\to\infty} \langle A_i, \tilde{X}^{(k_1^{(l)},k_2)} \rangle = b_i.\ee
Therefore, consider
\be\label{eq:lp_max_xi}
\begin{array}{ccl}
\mcal{W}_{\max}^{\hat{\xi}}\coloneqq& \max\limits_{\b{z},Z} & \b{c}^{\tp}\b{z} \\
& \st & \displaystyle \mcal{Q}(\hat{X}) - \sum_{j=1}^J z_j B_j = Z,\  \psi_m(Z)\in\mmc{P}[\Delta^{m}]^{\hom}_2.
\end{array}
\ee
{We show the above problem has a nonempty feasible set.}
Denote $p_{\hat{X}} \coloneqq \psi_m(\mcal{Q}(\hat{X}) )$ and ${p^{(l)}} \coloneqq \psi_m(\mcal{Q}(\tilde{X}^{(k_1^{(l)},k_2)})$.
For every $k_1^{(l)}$, it follows from the feasibility of $(\tilde{X},\b{\tilde{z}},\tilde{Z},\b{\tilde{\xi}})^{(k_1^{(l)},k_2)}$ that
\[ {p^{(l)}} - \psi_m(\sum_{j=1}^J \tilde{z}^{(k_1^{(l)},k_2)}_j B_j) =   \psi_m(\tilde{Z}^{(k_1^{(l)},k_2)}) \in \iq[\Delta^{m}]_{2k_2}.\]
Note that $\psi_m(\tilde{Z}^{(k_1^{(l)},k_2)})$ is a quadratic form. By Lemma \ref{lem:iqandp}, we further have
\be\label{eq:pl-psim>=0} {p^{(l)}} - \psi_m(\sum_{j=1}^J \tilde{z}^{(k_1^{(l)},k_2)}_j B_j)\in\mmc{P}[\Delta^{m}]^{\hom}_2.\ee
On the other hand, {since (\ref{eq:xitomeasure}) implies that $\tilde{X}^{(k_1^{(l)},k_2)} \to \hat{X}$ entrywise as $l\to \infty$,
the coefficients of $p^{(l)}$ also converge to $p_{\hat{X}}$ entrywise.
Thus from the compactness of $\Delta^{m}$, there exists $k^{\varepsilon}_1\in \N$} such that $k^{\varepsilon}_1\ge \frac{1}{\varepsilon}$ and the following holds for all $k_1^{(l)} \ge k^{\varepsilon}_1$:
\be \label{pX-pl+eps>=0}
p_{\hat{X}} - p^{(l)}+\varepsilon\in {\mmc{P}[\Delta^{m}]_2}.
\ee 
Recall that there exists a $\b{z}^{\circ}$ such that 
\eqref{eq:zo_pos} holds, by Assumption~\ref{as:co}(a).
For any fixed $k_1^{(l)}$ satisfying (\ref{pX-pl+eps>=0}) and any $k_2$, define $\b{\hat{z}}=\b{\tilde{z}}^{(k_1^{(l)},k_2)}-\varepsilon \b{z}^{\circ}$, $\hat{Z}= \mcal{Q}(\hat{X})-\sum_{j=1}^J ({\tilde{z}}^{(k_1^{(l)},k_2)}_j-\varepsilon z^{\circ}_j) B_j$. We have 
\[ \begin{aligned}
\psi_m(\hat{Z}) = & p_{\hat{X}} - \psi_m(\sum_{j=1}^J ({\tilde{z}}^{(k^{(l)},k_2)}_j-\varepsilon  z^{\circ}_j) B_j) \\
= & {\underbrace {{p^{(l)} - \psi_m(\sum_{j=1}^J {\tilde{z}}^{(k_1^{(l)},k_2)}_j B_j)}}_{\in \mmc{P}[\Delta^{m}]^{\hom}_2 \mbox{ by (\ref{eq:pl-psim>=0})} }}+ 
{\underbrace{(p_{\hat{X}} - {p^{(l)}})+\varepsilon \psi_m(\sum_{j=1}^J z_j^{\circ}B_j)}_{\in \mmc{P}[\Delta^{m}]^{\hom}_2\mbox{ {by (\ref{eq:zo_pos}) and (\ref{pX-pl+eps>=0})} }}} \in \mmc{P}[\Delta^{m}]^{\hom}_2.
\end{aligned}\]
This implies that $(\b{\hat{z}}, \hat{Z})$ is feasible to (\ref{eq:lp_max_xi}), and thus, $\mcal{W}_{\max}^{*}\ge \mcal{W}_{\max}^{\hat{\xi}}$ follows from (\ref{eq:AhatX=b}).
On the other hand, if we let $\mcal{D}\coloneqq \Vert \b{c}\Vert\Vert \b{z}^{\circ} \Vert + 1$, then {(note that the second inequality holds by (\ref{eq:ctz>=W-1/k}))}
\be\label{eq:W-D} \mcal{W}_{\max}^{\hat{\xi}}\ge \b{c}^{\tp}\b{\hat{z}} = \b{c}^{\tp}\b{\tilde{z}}^{(k_1^{(l)},k_2)} - \varepsilon \b{c}^{\tp} \b{z}^{\circ} \ge \mcal{W}_{\max}^{(k_1^{(l)},k_2)} - 1/k^{(l)}_1 - \varepsilon \b{c}^{\tp}\b{z}^{\circ} \ge  \mcal{W}_{\max}^{(k_1^{(l)},k_2)} - \mcal{D}\varepsilon. \ee
Remark that for the fixed $k_2$, $\mcal{W}_{\max}^{(k_1,k_2)}$ monotonically increases as $k_1$ increasing, as $\mathscr{S}[\Dt^n]_{2k_1}\subseteq \mathscr{S}[\Dt^n]_{2k_1'}$ if $k_1\le k_1'$.
Since $\mcal{W}_{\max}^{*}\ge \mcal{W}_{\max}^{\hat{\xi}}$, combining (\ref{eq:W-C}) and (\ref{eq:W-D}) shows that for all $k_1\ge k_1^{\epsilon}$ and $k_2\ge k_2^{\epsilon}$, it holds:
\[ \mcal{W}^*_{\max} - \mcal{C}\varepsilon \le \mcal{W}_{\max}^{(k_1,k_2)} \le  \mcal{W}^*_{\max} + \mcal{D}\varepsilon. \]
This completes the proof.
\qed

\vspace{1em}

\noindent{\bf{Proof of Corollary~\ref{cor:asym_conv}.}}
Under Assumption \ref{as:co}, it follows from Proposition \ref{prop:maxmin_duality} and Lemma \ref{lem:z=w} that $\mcal{W}^*_{\max} = \mcal{Z}^*_{\max\min}\le \mcal{Z}^*_{\min\max} =  \mcal{W}^*_{\min}$. So by Theorem \ref{tm:asym_conv} , 
we have \[\begin{array}{l}
\lim\limits_{\min\{k_1,k_2\}\to \infty}\mcal{W}^{(k_1,k_2)}_{\max} = \mcal{W}^*_{\max} = \mcal{Z}^*_{\max\min} \\
\qquad\qquad\qquad\qquad \le \mcal{Z}^*_{\min\max} =  \mcal{W}^*_{\min} =  \lim\limits_{\min\{k_1,k_2\}\to \infty}\mcal{W}^{(k_1,k_2)}_{\min}. 
\end{array}\]
\qed

\vspace{1em}

\noindent{\bf{Proof of Proposition~\ref{prop:ft}.}}

Here we only show the first item. 
Suppose \eqref{eq:ft_max} holds. 
By Proposition~\ref{prop:flat_truncation}, the homogeneous degree-$2$ truncation $\b{\xi}^{(k_1,k_2)}|_{2}^{\hom} = \varphi_n(X^{(k_1, k_2)})$ lies in $\mcal{R}[\Delta^n]^{\hom}_2$ and . 
{This implies that there exists $\b{x}^{(1)}\ddd \b{x}^{(r_1)}\in \Delta^n$ and $\lambda_1\ddd \lambda_{r_1}\in \re_+$ such that
\[ \b{\xi}|_{2}^{\hom} = \lambda_1 [\b{x}^{(1)}]_2^{\mathrm{hom}} 
+\cdots+ 
\lambda_{r_1} [\b{x}^{(r_1)}]_2^{\mathrm{hom}}.\]
Therefore, by the definition of the linear isomorphism $\phi_n$, one has
\[ X^{(k_1, k_2)} = \lambda_1\b{x}^{(1)}(\b{x}^{(1)})^T+\cdots + \lambda_{r_1}\b{x}^{(r_1)}(\b{x}^{(r_1)})^T \succcurlyeq_{cp}0. \]}
Meanwhile, from $\psi_m(Z^{(k_1,k_2)})\in\iq[\Delta^{m}]_{2k_2}$, we know it is nonnegative on $\Delta^{m}$, i.e., $\psi_m(Z^{(k_1,k_2)})\in \mmc{P}[\Delta^{m}]^{\hom}_2$. Therefore, $(X,\b{z},Z)^{(k_1,k_2)}$ is a feasible point of Problem \eqref{eq:lp_max}, implying $\mcal{W}_{\max}^{(k_1,k_2)} \le \mcal{W}_{\max}^*$.

\vspace{1em}

\noindent{\bf{Proof of Theorem~\ref{thm:alleq}.}}
Under both flat truncation conditions, Proposition~\ref{prop:maxmin_duality} and Proposition~\ref{prop:ft} gives the chain of inequalities:
\be\label{eq:allequalinproof} \mcal{W}_{\max}^{(k_1,k_2)} \le \mcal{W}_{\max}^* \le \mcal{Z}^*_{\max\min} \le \mcal{Z}^*_{\min\max} \le \mcal{W}_{\min}^* \le \mcal{W}_{\min}^{(k_1,k_2)}. \ee
If there is no duality gap between (\ref{eq:lp_max_relx}) and (\ref{eq:lp_min_relx}), meaning $\mcal{W}_{\max}^{(k_1,k_2)} = \mcal{W}_{\min}^{(k_1,k_2)}$, then all these values coincides, as claimed in (\ref{eq:all_eq}).

{Furthermore, (\ref{eq:ft_max}) guarantees that $(X,\b{z},Z)^{(k_1,k_2)}$ is a feasible point to Problem \eqref{eq:lp_max}, by Proposition~\ref{prop:ft}.
Thus, (\ref{eq:allequalinproof}) implies that $(X,\b{z},Z)^{(k_1,k_2)}$ is also an optimal solution to Problem \eqref{eq:lp_max}.} 
We therefore have the following
\be\label{eq:inner_min_dual_k1k2}
\begin{array}{ccl}
\mcal{W}_{\max}^{(k_1,k_2)} = \mcal{W}_{\max}^*=& \max\limits_{z\in\re^J,Z\in \mcal{S}^m} & c^{\tp}\b{z} \\
& \st & \displaystyle \mcal{Q}(X^{(k_1,k_2)}) - \sum_{j=1}^J z_j B_j = Z,\quad Z\succcurlyeq_{co} 0.
\end{array}
\ee
The dual problem of (\ref{eq:inner_min_dual_k1k2}) is
\be\label{eq:inner_min_k1k2}
\left\{ \begin{array}{cl}
\min\limits_{Y \in  \mcal{S}^m} & \lip \mcal{Q} (X^{(k_1,k_2)}), Y \rip\\
\st & \lip B_i, Y \rip = {c}_i, \quad i = 1,...,J, \\
& Y\succcurlyeq_{cp} 0.
\end{array}\right.
\ee
By Lemma~\ref{lm:objdiff} {and the assumption} $\mcal{W}_{\max}^{(k_1,k_2)} = \mcal{W}_{\min}^{(k_1,k_2)}$, it holds
\[ \begin{array}{rl}
0 = & b^{\tp}\b{w}^{(k_1,k_2)} - c^{\tp}\b{z}^{(k_1,k_2)} \\
= & \lip \varphi_n(X^{(k_1,k_2)}),\psi_n(W^{(k_1,k_2)})\rip + \lip \varphi_m(Y^{(k_1,k_2)}),\psi_m(Z^{(k_1,k_2)})\rip. 
\end{array}\]
This forces both $\lip \varphi_n(X^{(k_1,k_2)}),\psi_n(W^{(k_1,k_2)})\rip $ and $ \lip \varphi_m(Y^{(k_1,k_2)}),\psi_m(Z^{(k_1,k_2)})\rip$ equal to zero, since they are nonnegative by the fact that $\mathscr{S}[\Dt^n]_{2k_1} = (\iq[\Delta^{n}]_{2k_1})^*$ and $\mathscr{S}[\Dt^m]_{2k_2} = (\iq[\Delta^{m}]_{2k_2})^*$.
{Therefore, it follows from (\ref{eq:all_eq}), the feasibility of $(X,\b{z},Z)^{(k_1,k_2)}$ for Problem~(\ref{eq:lp_max_relx}), and the feasibility of $(Y,\b{w},W)^{(k_1,k_2)}$ for Problem~(\ref{eq:lp_min_relx}) that
\[ 
\begin{aligned}
\mcal{W}^{(k_1,k_2)}_{\min} & = b^{\tp}\b{w}^{(k_1,k_2)}   = \Big\langle \sum_{i=1}^I {w}^{(k_1,k_2)}_iA_i, X^{(k_1,k_2)} \Big\rangle 
\\
& = \Big\langle \mcal{Q}^*(Y^{(k_1,k_2)}) + W^{(k_1,k_2)}, X^{(k_1,k_2)} \Big\rangle =  \Big\langle \mcal{Q}^*(Y^{(k_1,k_2)}) , X^{(k_1,k_2)} \Big\rangle\\
& = \Big\langle \mcal{Q}(X^{(k_1,k_2)}) , Y^{(k_1,k_2)} \Big\rangle = \mcal{W}^{(k_1,k_2)}_{\max}. 
\end{aligned}\]

The above implies that $Y^{(k_1,k_2)}$ minimizes (\ref{eq:inner_min_k1k2}) by the duality of (\ref{eq:inner_min_dual_k1k2}) and (\ref{eq:inner_min_k1k2}).
That is, it solves the inner minimization problem of (\ref{eqn:conic}), given the outer variable equal to $X^{(k_1,k_2)}$.
Finally, it follows from $\mcal{W}^{(k_1,k_2)}_{\max} = \mcal{Z}^{*}_{\max\min}$ that $(X^{(k_1,k_2)},Y^{(k_1,k_2)})$ solves (\ref{eqn:conic}),} and we can show that it also solves (\ref{eqn:conic_minmax}) in a similar way.
\qed

\vspace{1em}

\noindent{\bf{Proof of Theorem~\ref{tm:finiteconv}.}}
{
Recall that we denote $p_{Z^*}\coloneqq \psi(Z^*)$ and $p_{W^*}\coloneqq \psi(W^*) $.
By Lemma~\ref{lm:objdiff} and the assumption that $\mcal{W}^{^*}_{\max} = \mcal{W}^{^*}_{\min}$}, we have
\be\label{eq:zero_dual_gap} \begin{aligned}
0 = b^{\tp}w^* - c^{\tp}z^* = \lip \varphi_n(X^*),p_{W^*}\rip + \lip \varphi_m(Y^*),p_{Z^*}\rip.
\end{aligned}
\ee
Since $p_{W^*}$ and $p_{Z^*}$ are nonnegative on $\Dt_n$ and $\Dt_m$ respectively and $\varphi_n(X^*)\in \mmc{R}[\Delta^{n}]^{\hom}_2$, $\varphi_m(Y^*)\in \mmc{R}[\Delta^{m}]^{\hom}_2$,
(\ref{eq:zero_dual_gap}) implies $p_{W^*}$ vanishes on the support of $\varphi_n(X^*)$ and $p_{Z^*}$ equals zero on the support of $\varphi_m(Y^*)$.
So the minimum values of (\ref{eq:minPsi}) and (\ref{eq:minPhi}) are both zero.

By Assumption~\ref{as:inIQ}, there exist $\hat{k}_1$ and $\hat{k}_2$ such that $p_{W^*}\in \iq[\Delta^n]_{\hat{k}_1}$ and $p_{Z^*}\in \iq[\Delta^m]_{\hat{k}_2}$.
Let $k_1$ and $k_2$ be fixed degrees such that $k_1\ge \hat{k}_1$ and $k_2\ge \hat{k}_2$.
Consider the $k_1$th moment relaxation of (\ref{eq:minPhi})
\be\label{eq:minPhi_mom} \begin{array}{cl} 
\displaystyle \min_{\xi} & \langle p_{W^*}, \xi \rangle  \\
\st & \xi_{\bf 0} = 1,\ \xi\in \mathscr{S}[\Delta^n]_{2k_1}.
\end{array}
\ee 
Since $p_{W^*} \in \iq[\Delta^n]_{\hat{k}_1}\subset \iq[\Delta^n]_{k_1}$, and $\iq[\Delta^n]_{k_1}$ is the dual cone of $\mathscr{S}[\Delta^n]_{k_1}$, the optimal value of (\ref{eq:minPhi_mom}) must be nonnegative.
Note that the optimal value of (\ref{eq:minPhi}) is zero, and (\ref{eq:minPhi_mom}) is an outer relaxation problem of (\ref{eq:minPhi}), the minimum value of (\ref{eq:minPhi_mom}) is not bigger than zero.
This forces the minimum value of (\ref{eq:minPhi_mom}) equals zero, i.e., the moment relaxation (\ref{eq:minPhi_mom}) is tight.
So by Assumption~\ref{as:inIQ} and \cite[Theorem~2.2]{nie2013certifying}, all minimizers of (\ref{eq:minPhi_mom}) satisfies (\ref{eq:ft_max}).

Let $(X,\b{z},Z,\xi)^{(k_1,{k}_2)}$ be the maximizer of (\ref{eq:lp_max_relx}).
If $\xi^{(k_1,{k}_2)}_{\bf 0} = 0$, 
then $\langle {\bf 1}, \xi^{(k_1,{k}_2)} \rangle = 0$, 
where ${\bf 1}$ represents the constant polynomial that equals to $1$.
Thus by \cite[Lemma~5.7]{laurent2009sums}, 
$p\in \ker (M_k[ \xi^{(k_1,{k}_2)} ])$ holds for every $p\in \re^{\N^{n}_k}$,
which implies that $\xi^{(k_1,{k}_2)}$ is the zero vector and automatically satisfies (\ref{eq:ft_max}).
Suppose that $\xi^{(k_1,{k}_2)}_{\bf 0} =: \xi_0 > 0$.
By Lemma~\ref{lm:objdiff}, for all $(X,\b{z},Z,\xi)$ feasible for (\ref{eq:lp_max_relx}), it holds
\[ \begin{aligned}
& c^{\tp}z = b^{\tp}w^{*} - \lip \varphi_n(X),p_{W^{*}}\rip - \lip \varphi_m(Y^{*}),\psi_m(Z)\rip.
\end{aligned} \]
Therefore, $(X,\b{z},Z,\xi)^{(k_1,{k}_2)}$ also maximizes
\be\label{eq:cpmin_relx_re}
\begin{array}{cl}
\displaystyle\max_{X,\b{z},Z,\xi}   &  b^{\tp}w^{*} - \lip \varphi_n(X),p_{W^{*}}\rip - \lip \varphi_m(Y^{*}),\psi_m(Z)\rip\\
 \st & \lip A_i, X \rip = {b}_i, \quad i = 1,..., I\\
     & \displaystyle \mcal{Q}(X) - \sum_{j=1}^J z_j B_j = Z,\ \varphi_n(X) = \xi|^{\hom}_{2},\\
     & \xi \in \mathscr{S}[\Dt^n]_{2k_1},\ \psi_m(Z)\in\iq[\Delta^{m}]_{2k_2}.
\end{array}
\ee
As $\varphi_n(X^{*})\in\mathcal{R}[\Delta^n]_2^{hom}$, there exist $\bx^{(1)}\ddd \bx^{(r)}$ such that $\bx^{(i)}\in \Delta^n$ for every $i=1\ddd r$, and 
\[ \varphi_n(X^{*}) = \lmd_1[\bx^{(1)}]_2^{hom}+\lmd_2[\bx^{(2)}]_2^{hom}+\ldots + \lmd^{(r)}[\bx^{(r)}]_2^{hom}, \]
for some nonnegative scalars $\lmd_1\ddd \lmd_r$.
Let 
\[\xi^* \,:=\, \lmd_1[\bx^{(1)}]_{k_1}+\lmd_2[\bx^{(2)}]_{k_1}+\ldots + \lmd_t[\bx^{(r)}]_{k_1},\]
Then $(X,\b{z},Z,\xi)^*$ is feasible for (\ref{eq:cpmin_relx_re}), since $\psi_m(Z^*) \in \iq[\Delta^m]_{{k}_2}$ by Assumption~\ref{as:inIQ}, and $\xi^*\in\mathcal{R}[\Delta^n]_{k_1}\subseteq \mathscr{S}[\Delta^n]_{k_1}$.
Since $(X,\b{z},Z,\xi)^{(k_1,{k}_2)}$ maximizes (\ref{eq:cpmin_relx_re}), we further have 
\be\label{eq:k1k2star=0} 
\begin{array}{rl}
 & \lip \varphi_n(X^{(k_1,k_2)}),p_{W^{*}}\rip + \lip \varphi_m(Y^{*}),\psi_m(Z^{(k_1,k_2)})\rip \\
\le &\lip \varphi_n(X^*),p_{W^{*}}\rip + \lip \varphi_m(Y^{*}),p_{Z^*}\rip = 0.
\end{array}\ee
In the above, the last equation is given by (\ref{eq:zero_dual_gap}).
Because $\varphi_n(X^{(k_1,k_2)})\in \mathscr{S}[\Dt^n]_{2k_1}$, $p_{W^*} = \psi_m(W^*) \in \iq[\Delta^n]_{{k}_2}$ implies $\lip \varphi_n(X^{(k_1,k_2)}),p_{W^{*}}\rip\ge 0$.
Similarly, $$\lip \varphi_m(Y^{*}),\psi_m(Z^{(k_1,k_2)})\rip\ge 0$$ follows from $\varphi_m(Y^{*})\in\mathcal{R}[\Delta^m]_2^{hom}$ and $\psi_m(Z^{(k_1,k_2)})\in\iq[\Delta^{m}]_{2k_2}$.
Equation (\ref{eq:k1k2star=0}) forces 
\[ \lip \varphi_n(X^{(k_1,k_2)}),p_{W^{*}}\rip = \lip \varphi_m(Y^{*}),\psi_m(Z^{(k_1,k_2)})\rip=0.\]
Remark that $\lip \varphi_n(X^{(k_1,k_2)}),p_{W^{*}}\rip = \lip p_{W^{*}},\xi^{(k_1,k_2)}\rip$ since $\varphi_n(X^{(k_1,k_2)}) = \xi^{(k_1,k_2)}|^{\hom}_{2}$.
It is clear that the normalized tms $\xi^{(k_1,{k}_2)}/\xi_0$ is feasible for (\ref{eq:minPhi_mom}) and $\langle p_{W^{*}},\xi^{(k_1,{k}_2)}/\xi_0\rangle=0$, which implies $\xi^{(k_1,{k}_2)}/\xi_0$ is a minimizer of (\ref{eq:minPhi_mom}).
Therefore, the flat truncation condition (\ref{eq:ft_max}) holds for $\xi^{(k_1,{k}_2)}/\xi_0$, and thus, it holds for $\xi^{(k_1,{k}_2)}$ as well.

Similarly, we can show that the flat truncation condition (\ref{eq:ft_min}) holds for all minimizers of $(Y,w,W,\nu)^{(k_1,k_2)}$ of (\ref{eq:lp_min_relx}).
We omit the details for neatness.
\qed

\section{More Details on The Moment-SOS hierarchy of semidefinite relaxations}\label{sc:sup}
\label{sc:Moment-SOS_appendix}
We briefly review Lasserre's Moment-SOS hierarchy of semidefinite relaxations for solving polynomial optimization problems, which was first studied in \cite{lasserre2001global} and \cite{parrilo2003semidefinite}.
Let $p$ be a polynomial in $\bx$ and consider the polynomial optimization over the simplex
\be\label{eq:las_pop} \left\{\begin{array}{ccl} 
p_{\min} \coloneqq &\displaystyle \min_{x} &\quad  p(\bx)  \\
& \st  &\quad  \bx \in \Delta^n.
\end{array}\right.
\ee
For the integer $k$ such that $2k\ge \deg(p)$, the $k$th order moment relaxation is 
\be\label{eq:las_mom} \left\{\begin{array}{ccl} 
p_{mom,k} \coloneqq & \displaystyle \min_{\xi} &\quad \langle p, \xi \rangle \coloneqq coef(p)^T\xi  \\
& \st &\quad \xi_{\bf 0} = 1,\ \xi\in \mathscr{S}[\Delta^n]_{2k}.
\end{array}\right.
\ee
Its dual problem is the $k$th SOS relaxation
\be\label{eq:las_sos} \left\{\begin{array}{ccl} 
p_{sos,k} \coloneqq &\displaystyle \max_{\gm} &\quad \gm  \\
& \st &\quad p(x)-\gm\in \iq[\Dt^n]_{2k}.
\end{array}\right.
\ee
To illustrate these definitions, consider the following example. 
\begin{example} \label{ex:tms_htms}
Suppose $d=2$ and $n=3$. A tms $\b{\xi}$ of degree 2 can be written as
\be\label{eq:xi_d2n3} \b{\xi} = (\xi_{000}, \xi_{100},    \xi_{010} ,  \xi_{001},  \xi_{200} ,  \xi_{110},  \xi_{101},  \xi_{020},  \xi_{011},  \xi_{002})^{\tp}, \ee
Its htms of degree 2 consists of the last six entries.
If $\b{x} = (0.5, 0.5, 0)^{\tp}\in\Dt^3$,
then $\xi_{\alpha_1\alpha_2\alpha_3} = x_1^{\alpha_1}x_2^{\alpha_2}x_3^{\alpha_3}$. Therefore, \[ [\b{x}]_2 = (1,0.5, 0.5, 0, 0.25, 0.25, 0, 0.25, 0, 0)^{\tp},\quad 
[\b{x}]_2^{\hom} = (0.25, 0.25, 0, 0.25, 0, 0)^{\tp}. \]
\end{example}

\begin{example}\label{ex:pop_appendix}
Let $\bx \in \re^3$ and let $p(\bx) = 2x_1+x_2^2-x_1x_3$.
Recall that $\b{\xi}$ is given by (\ref{eq:xi_d2n3} in Example \ref{ex:tms_htms}).
Then we have
\[ \langle p, \xi \rangle = 2\xi_{100} + \xi_{020} - \xi_{101} \]
\[ 
M_{1}[\b{\xi}] = \lvt \begin{array}{cccc}
\xi_{000} & \xi_{100} & \xi_{010} & \xi_{001}\\
\xi_{100} & \xi_{200} & \xi_{110} & \xi_{101}\\
\xi_{010} & \xi_{110} & \xi_{020} & \xi_{011}\\
\xi_{001} & \xi_{101} & \xi_{011} & \xi_{002}
\end{array}
\rvt ,\quad
\mathscr{V}^{(2)}_{1-\b{e}^{\tp}\b{x}}[\xi] = \lvt \begin{array}{c}
\xi_{000} - \xi_{100} - \xi_{010} - \xi_{001}\\
\xi_{100} - \xi_{200} - \xi_{110} - \xi_{101}\\
\xi_{010} - \xi_{110} - \xi_{020} - \xi_{011}\\
\xi_{001} - \xi_{101} - \xi_{011} - \xi_{002}
\end{array}
\rvt,
\]
\[
L^{(1)}_{x_1}[\b{\xi}] = \xi_{100},\quad L^{(1)}_{x_2}[\b{\xi}] = \xi_{010} ,\quad 
L^{(1)}_{x_3}[\b{\xi}] = \xi_{001} ,\quad L^{(1)}_{1-\b{x}^{\tp}\b{x}}[\b{\xi}] = \xi_{000} - \xi_{200} - \xi_{020} - \xi_{002}.
\]
Furthermore, for $\hat{\b{x}} = (0.5, 0.5, 0)^{\tp}\in\Dt^3$, if we denote $\hat{\b\xi}\coloneqq [\hat{\b{x}}]_2$, then 
\[M_{1}[\hat{\b{\xi}}] = \lvt \begin{array}{cccc}
1 & 0.5 & 0.5 & 0\\
0.5 & 0.25 & 0.25 & 0\\
0.5 & 0.25 & 0.25 & 0\\
0 & 0 & 0 & 0
\end{array}
\rvt ,\quad 
\mathscr{V}^{(2)}_{1-\b{e}^{\tp}\b{x}}[\hat{\b\xi}]= \lvt \begin{array}{c}
0\\
0\\
0\\
0
\end{array}
\rvt, \]
\[
L^{(1)}_{x_1}[\hat{\b\xi}]  =0.5,\quad L^{(1)}_{x_2}[\hat{\b\xi}]  =0.5,\quad 
L^{(1)}_{x_3}[\hat{\b\xi}]  =0,\quad L^{(1)}_{1-\b{x}^{\tp}\b{x}}[\hat{\b\xi}] =0.5.
\]
\end{example}

Both (\ref{eq:las_mom}) and (\ref{eq:las_sos}) are semidefinite programs.
Because $[\bx]_{2k} \in \mathscr{S}[\Delta^n]_{2k} $ for every $\bx \in \Dt^n$, the following holds for any $k$ by weak duality:
\[ p_{\min} \ge p_{mom,k} \ge p_{sos,k}. \]
Indeed, one has $\lim_{k\to\infty} p_{sos,k} = p_{\min}$ by Putinar Positivstellensatz (see Lemma~\ref{lem:iqandp}).
This property is usually referred to as {\it asymptotic convergence}.
Moreover, correspondence to the Putinar's Positivstellensatz for nonnegative polynomials (Lemma~\ref{lem:iqandp}), there exists a characterization for the asymptotic membership for moment vectors.
Denote by $\re^{\N^n}$ be the (infinitely dimensional) space of vectors labeled by monomial powers $\alpha \in \N^n$.
\begin{lemma}{(\cite{putinar1993positive})}\label{lem:moment_putinar} 
Let $\b{\xi}^{\infty} \coloneqq (\xi^{\infty}_{\b{\alpha}})_{\b{\alpha} \in \N^n} \in \re^{\N^n}$,
and let $\b{\xi}^{(k)} \coloneqq (\xi^{\infty}_{\b{\alpha}})_{\b{\alpha} \in \N^n_{2k}} \in \re^{\N^n_{2k}}$ be the subvector, for each $k=1,2,\ldots.$
Then, 
\[ \b{\xi}^{(k)} \in \mathscr{S}[\Dt^n]_{2k},\quad \forall k=1,2,\ddd \]
if and only if there exists a measure $\mu$ supported on $\Dt^n$ such that 
\[ \b{\xi}^{\infty}_{\alpha} = \int_{\Dt^n} \bx^{\alpha} d\mu,\quad \forall \alpha \in \N^n. \]
\end{lemma}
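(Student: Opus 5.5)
The plan is to prove the two directions separately. The ``if'' direction is a direct verification. The ``only if'' direction goes through the Riesz functional of $\b{\xi}^{\infty}$: we show it is nonnegative on polynomials that are nonnegative on $\Dt^n$, and then apply the Riesz representation theorem on the compact set $\Dt^n$.

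\emph{Sufficiency.} Suppose $\xi^{\infty}_{\b\alpha}=\int_{\Dt^n}\bx^{\b\alpha}d\mu$. Then $\b{\xi}^{(k)}=\int_{\Dt^n}[\bx]_{2k}\,d\mu$, so every matrix in (\ref{eq:Sk}) is an integral of a pointwise object:
\begin{itemize}
\item $M_k[\b{\xi}^{(k)}]=\int [\bx]_k[\bx]_k^{\tp}d\mu\succeq 0$;
\item $L^{(k)}_{x_i}[\b{\xi}^{(k)}]=\int x_i[\bx]_{k-1}[\bx]_{k-1}^{\tp}d\mu\succeq 0$, because $x_i\ge 0$ on $\Dt^n$;
\item $L^{(k)}_{1-\bx^{\tp}\bx}[\b{\xi}^{(k)}]\succeq 0$ likewise, because $1-\bx^{\tp}\bx\ge0$ on $\Dt^n$;
\item $\mathscr{V}^{(2k)}_{1-\b{e}^{\tp}\bx}[\b{\xi}^{(k)}]=0$, because $1-\b{e}^{\tp}\bx$ vanishes on the support of $\mu$.
\end{itemize}
Hence $\b{\xi}^{(k)}\in\mathscr{S}[\Dt^n]_{2k}$ for every $k$.

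\emph{Necessity.} Let $\mmc{L}:\re[\bx]\to\re$ be the Riesz functional $\mmc{L}(\bx^{\b\alpha})=\xi^{\infty}_{\b\alpha}$. Take any $q\in\iq[\Dt^n]$, so $q\in\iq[\Dt^n]_{2k}$ for some $k$. Since $(\iq[\Dt^n]_{2k})^{\star}=\mathscr{S}[\Dt^n]_{2k}$ and $\b{\xi}^{(k)}\in\mathscr{S}[\Dt^n]_{2k}$, we get $\mmc{L}(q)=\langle q,\b{\xi}^{(k)}\rangle\ge 0$. Thus $\mmc{L}\ge0$ on all of $\iq[\Dt^n]$.

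Now let $p$ be any polynomial with $p\ge 0$ on $\Dt^n$, and fix $\varepsilon>0$. Then $p+\varepsilon>0$ on $\Dt^n$. Putinar's Positivstellensatz (Lemma~\ref{lem:iqandp}(2)) gives $p+\varepsilon\in\iq[\Dt^n]$. Here we use that the quadratic module is Archimedean because the generator $1-\bx^{\tp}\bx$ is included; Putinar's theorem applies to polynomials of any degree, not only quadratic forms. Therefore $\mmc{L}(p)+\varepsilon\,\xi^{\infty}_{\bf 0}\ge 0$ for every $\varepsilon>0$, and letting $\varepsilon\to0$ yields $\mmc{L}(p)\ge 0$.

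Next comes continuity. For any polynomial $p$, the polynomials $\|p\|_{\infty,\Dt^n}\pm p$ are nonnegative on $\Dt^n$. Applying the previous step to both gives
\[
|\mmc{L}(p)|\le \xi^{\infty}_{\bf 0}\,\|p\|_{\infty,\Dt^n}.
\]
So $\mmc{L}$ is a bounded positive linear functional on the polynomials with the sup norm over $\Dt^n$. By Stone--Weierstrass, polynomials are dense in $C(\Dt^n)$, so $\mmc{L}$ extends uniquely to a bounded functional on $C(\Dt^n)$. Positivity survives the extension: a nonnegative continuous $f$ is a uniform limit of polynomials $p_j+1/j\ge0$ on $\Dt^n$. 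The Riesz--Markov representation theorem then gives a finite Borel measure $\mu$ on $\Dt^n$ with $\mmc{L}(f)=\int f\,d\mu$. Taking $f=\bx^{\b\alpha}$ gives $\xi^{\infty}_{\b\alpha}=\int_{\Dt^n}\bx^{\b\alpha}d\mu$.

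The main obstacle is the positivity step, passing from ``$\mmc{L}\ge0$ on $\iq[\Dt^n]$'' to ``$\mmc{L}\ge0$ on all polynomials nonnegative on $\Dt^n$''. This is exactly where the Archimedean form of Putinar's theorem enters, together with the $\varepsilon$-perturbation. Once this holds, the functional-analytic extension and representation are standard.
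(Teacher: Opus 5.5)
The paper gives no proof of this lemma; it only cites Putinar (1993). Your argument is correct. It is the standard way to get the moment form from the Positivstellensatz already recorded in Lemma~\ref{lem:iqandp}(2), and it has three steps:
\begin{enumerate}
\item Duality gives $\mathcal{L}\ge 0$ on $\iq[\Delta^n]$.
\item The $\varepsilon$-perturbation, together with the Archimedean generator $1-\bx^{\tp}\bx$, extends this to $\mathcal{L}\ge 0$ on every polynomial nonnegative on $\Delta^n$.
\item Riesz--Markov on the compact set $\Delta^n$ (equivalently, Haviland's theorem) produces $\mu$.
\end{enumerate}
Because the paper takes Lemma~\ref{lem:iqandp}(2) as a black box, using it here is not circular.

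Putinar's original argument runs in the opposite direction. He builds a Hilbert space from $\mathcal{L}$ and uses the Archimedean condition to make multiplication by each $x_i$ a bounded self-adjoint operator. He then takes $\mu$ from the joint spectral measure, with the localizing conditions confining its support to the semialgebraic set. Only afterwards does he deduce the Positivstellensatz by a separation argument. Your route is shorter and fits the paper's setup. Putinar's is self-contained and does not presuppose the Positivstellensatz.

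One detail is worth writing out. $\sup_{\Delta^n}|p|$ is only a seminorm on $\re[\bx]$, since it vanishes on multiples of $1-\b{e}^{\tp}\bx$. Before extending to $C(\Delta^n)$, note explicitly that your bound $|\mathcal{L}(p)|\le \xi^{\infty}_{\bf 0}\sup_{\Delta^n}|p|$ forces $\mathcal{L}$ to vanish on such polynomials. Then $\mathcal{L}$ is well defined on the restrictions $p|_{\Delta^n}$, and the density and extension argument goes through as you wrote it.
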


In the meanwhile, we say the Moment-SOS hierarchy is {\it tight} or has {\it finite convergence} if $p_{sos,k} = p_{\min}$ for some relaxation order $k$.
It is shown in \cite{nie2014optimality} that the finite convergence is guaranteed if the global minimizers of (\ref{eq:las_pop}) satisfy some optimality conditions.
Finally, the finite convergence can be certified by the \emph{flat truncation conditions.}
When they are satisfied, one can extract global minimizers of (\ref{eq:las_pop}) from the moment matrix at the minimizer of (\ref{eq:las_mom}).
We refer the interested readers to \cite{blekherman2012semidefinite,parrilo2003semidefinite} for more details regarding nonnegative and SOS polynomials, and \cite{curto2005truncated,nie2014truncated,nie2015linear,nie2023moment,laurent2009sums} for polynomial optimization and {the truncated moment problem.}

\newpage

\section{Supplement Numerical Results}\label{sc:supnum}
\begin{table}[h]
  \centering
  \caption{Computation results for cyclic Blotto game of three battlefields with semidefinite relaxation ($N=3$).}
\label{tab:cyclic_blotto_three}
  \begin{tabular}{|c|c|c|c|c||c|c|c|c|c|c|}
    \hline
    \multicolumn{5}{|c||}{Defender Strategies} & \multicolumn{5}{c|}{Attacker Strategies}\\ \hline
    $A$ & $p_x$ & Loc 1 & Loc 2 & Loc 3  & $B$ & $p_y$ & Loc 1 & Loc 2 &Loc 3 \\ \hline
     \multirow{6}{*}{2} & 1/6 & 1 & 1 & 0 & \multirow{6}{*}{2} & 1/6 & 1 & 1 & 0 \\    \cline{2-5}\cline{7-10}
     & 1/6 & 1 & 0 & 1 &  & 1/6 & 1 & 0 & 1  \\    \cline{2-5}\cline{7-10}
     & 1/6 & 0 & 1 & 1 &  & 1/6 & 0 & 1 & 1 \\    \cline{2-5}\cline{7-10}
     & 1/6 & 2 & 0 & 0 &  & 1/6 & 2 & 0 & 0  \\    \cline{2-5}\cline{7-10}
     & 1/6 & 0 & 2 & 0 &  & 1/6 & 0 & 2 & 0  \\    \cline{2-5}\cline{7-10}
     & 1/6 & 0 & 0 & 2 &  & 1/6 & 0 & 0 & 2 \\    \hline
     \multicolumn{10}{|c|}{Defender's Payoff = 0.5} \\\hline

     \multirow{6}{*}{3} & &  &  &  & \multirow{6}{*}{2} & 2/15 & 2 & 0 & 0 \\    \cline{7-10}
     & 0.4  & 1 & 1 & 1 &  & 2/15 & 0 & 2 & 0  \\    \cline{2-5}\cline{7-10}
     & 0.2 & 2 & 0 & 1 &  & 2/15 & 0 & 0 & 2 \\    \cline{2-5}\cline{7-10}
     & 0.2 & 1 & 2 & 0 &  & 0.2 & 1 & 1 & 0  \\    \cline{2-5}\cline{7-10}
     & 0.2 & 0 & 1 & 2 &  & 0.2 & 1 & 0 & 1  \\    \cline{7-10}
     &  &  &  &  &  & 0.2 & 0 & 1 & 1 \\    \hline
     \multicolumn{10}{|c|}{Defender's Payoff = 1.4} \\\hline

     \multirow{6}{*}{2} & 17/72 & 2 & 0 & 0  & \multirow{6}{*}{3} &  &  &  &  \\    \cline{2-5}\cline{7-10}
     & 17/72 & 0 & 2 & 0 &  & 1/3 & 2 & 1 & 0  \\    \cline{2-5}\cline{7-10}
     & 17/72 & 0 & 0 & 2 &  &  &  &  &  \\    \cline{2-5}\cline{7-10}
     & 7/72 & 1 & 1 & 0 &  & 1/3 & 1 & 0 & 2  \\    \cline{2-5}\cline{7-10}
     & 7/72 & 1 & 0 & 1 &  &  &  &  &   \\    \cline{2-5}\cline{7-10}
     & 7/72 & 0 & 1 & 1 &  & 1/3 & 0 & 2 & 1 \\    \hline
     \multicolumn{10}{|c|}{Defender's Payoff = $-1/3$} \\\hline

      \multirow{6}{*}{2} & &  &  &  & \multirow{6}{*}{4} & 1/6 & 3 & 1 & 0 \\    \cline{7-10}
     & 1/3  & 2 & 0 & 0 &  & 1/6 & 0 & 3 & 1  \\    \cline{2-5}\cline{7-10}
     & 1/3  & 0 & 2 & 0 &  & 1/6 & 1 & 0 & 3 \\    \cline{2-5}\cline{7-10}
     & 1/3  & 0 & 0 & 2 &  & 1/6 & 2 & 1 & 1  \\    \cline{7-10}
     &  &  &  &  &  & 1/6 & 1 & 2 & 1  \\    \cline{7-10}
     &  &  &  &  &  & 1/6 & 1 & 1 & 2 \\    \hline
     \multicolumn{10}{|c|}{Defender's Payoff = -1} \\\hline

     \multirow{6}{*}{4} & 2/15 & 2 & 2 & 0 & \multirow{6}{*}{2} & 2/15 & 2 & 0 & 0 \\    \cline{2-5}\cline{7-10}
     & 2/15 & 2 & 0 & 2 &  & 2/15 & 0 & 2 & 0  \\    \cline{2-5}\cline{7-10}
     & 2/15 & 0 & 2 & 2 &  & 2/15 & 0 & 0 & 2 \\    \cline{2-5}\cline{7-10}
     & 0.2 & 1 & 1 & 2 &  & 0.2 & 1 & 1 & 0  \\    \cline{2-5}\cline{7-10}
     & 0.2 & 1 & 2 & 1 &  & 0.2 & 1 & 0 & 1  \\    \cline{2-5}\cline{7-10}
     & 0.2 & 2 & 1 & 1 &  & 0.2 & 0 & 1 & 1 \\    \hline
     \multicolumn{10}{|c|}{Defender's Payoff = 29/15} \\\hline
     % \multirow{4}{*}{4}    & $(2,2)$ & $1/2$ & $(1,3), (2,4)$ & $1/4$ & $(1,2), (1,4), (2,3), (3,4)$ & \\ \cline{2-7}
  \end{tabular}
  \vspace{0.5em}
\end{table}

%%%%%%%%%%%%%%%%%
\end{document}